\providecommand\@dotsep{5}
\def\listtodoname{List of Todos}
\def\listoftodos{\@starttoc{tdo}\listtodoname}
\numberwithin{equation}{section}
\def\dis{\displaystyle}
\def\cal{\mathcal}
\newtheorem{theorem}{Theorem}[section]
\newtheorem{definition}{Definition}[section]
\newtheorem{proposition}[theorem]{Proposition}
\newtheorem{lemma}[theorem]{Lemma}
\newtheorem{corollary}[theorem]{Corollary}
\newtheorem{remark}{Remark}
\newtheorem{problem}{Problem}
\newcommand{\W}{\textrm W}
\newcommand\R{\mathbb R}
\newcommand\N{\mathbb N}
\begin{document}

\title[tempered fractional derivative]
{fractional tempered variational calculus}

\author{C\'esar E. Torres Ledesma}
\author{Gast\~ao S. F.  Frederico}
\author{Manuel M. Bonilla}
\author{Jes\'us A. Rodr\'iguez}

\address[C\'esar T. Ledesma]
{\newline\indent Instituto de Investigaci\'on en Matem\'aticas
\newline\indent
Faculta de Ciencias F\'isicas y Matem\'aticas
\newline\indent
Universidad Nacional de Trujillo
\newline\indent
Av. Juan Pablo II s/n. Trujillo-Per\'u}
\email{\href{ctl\_576@yahoo.es}{ctl\_576@yahoo.es}}

\address[Gast\~ao S. F. Frederico]
{\newline\indent Campus de Russas
\newline\indent
Universidade Federal de Cear\'a,
\newline\indent
62900-000, Russas - Brazil
\newline\indent
and
\newline\indent
Faculty of Science and Technology
\newline\indent
University of Cape Verde
\newline\indent
Praia, Santiago, Cape Verde}
\email{\href{gastao.frederico@ua.pt }{gastao.frederico@ua.pt }}

\address[Manuel M. Bonilla, Jes\'us A. Rodr\'iguez]
{\newline\indent Departamento de Matem\'aticas
\newline\indent
Universidad Nacional de Trujillo
\newline\indent
Av. Juan Pablo II s/n. Trujillo-Per\'u}
\email{\href{mailto: mmontalvo@unitru.edu.pe, javalos@unitru.edu.pe}{ mmontalvo@unitru.edu.pe, javalos@unitru.edu.pe}}

\pretolerance10000


\begin{abstract}
In this paper, we derive sufficient conditions ensuring the existence of a weak solution $u$ for a tempered fractional Euler-Lagrange equations 
$$
\frac{\partial L}{\partial x}(u,{^C}\mathbb{D}_{a^+}^{\alpha, \sigma} u, t) + \mathbb{D}_{b^-}^{\alpha, \sigma}\left(\frac{\partial L}{\partial y}(u, {^C}\mathbb{D}_{a^+}^{\alpha, \sigma}u, t) \right) = 0
$$
on a real interval $[a,b]$ and ${^C}\mathbb{D}_{a^+}^{\alpha, \sigma}, \mathbb{D}_{b^-}^{\alpha, \sigma}$ are the left and right Caputo and Riemann-Liouville tempered fractional derivatives respectively of order $\alpha$. Furthermore, we study a fractional tempered version of Noether theorem and we provide a very explicit expression of a constant of motion in terms of symmetry group and Lagrangian for fractional problems of calculus of variations. Finally we study a mountain pass type solution of the cited problem.
\end{abstract}


\subjclass[2010]{Primary 26A33; Secondary 49K05, 49K35, 49M05}
\keywords{tempered fractional derivative, tempered fractional space of Sobolev type, variational calculus, Noether's theorem}

\maketitle

\section{Introduction}

Continuum-time random walk (CTRW) is a rigorous and general mathematical model for particle kinetics, which incorporates waiting times and/or non-Gaussian jump distributions with divergent second moments to account for the anomalous jumps called L\'evy flights \cite{metzler}. The continuous limit for such models leads to a fractional in time and/or space diffusion equation. However, in practice, many physical processes take place in bounded domains in finite times and have finite moments. Therefore, the divergent second moments may not be applicable to such processes. In order to overcome this modeling barrier, there are different techniques such as discarding the very large jumps and employing truncated L\'evy flights \cite{mantegna} or, adding a high-order power-law factor \cite{sokolov}. However, the most popular, and perhaps most rigorous, approach to get finite moments is exponentially tempering the probability of large jumps of L\'evy flights, which results in tempered-stable L\'evy processes with finite moments \cite{castillo, mm, Sabzikar}. The corresponding fluid (continuous) limit for such models yields the tempered fractional diffusion equation, which complements the previously known models in fractional calculus.

Nowadays tempered fractional calculus is considered a natural generalization of fractional calculus. The tempered fractional derivatives and integrals are obtained when the fractional derivatives and integrals are multiplied by an exponential factor \cite{liemert, sayed, mohsen}. 
Tempered fractional differential equations have been applied in different fields of physics such as in geophysics, statistical physics, plasma physics or in the context of astrophysics \cite{castillo, gajda, kulberg, mm, as, mohsen, zhang}. Apart from the physics field, the tempered fractional derivatives have also been applied in finance for modeling price fluctuations with semi-heavy tails \cite{Sabzikar}. 

The calculus of variations is one of the most traditional fields within mathematics and its applications. Although the idea of minimization as the basis of many physical laws had its origin among the Greeks, it was not until the last years of the 17th century that, with the help of the recently formalized differential calculus, the problem of finding the minimum of a function defined on a set of functions. Starting with Johan Bernoulli's public challenge, in 1696, to the best mathematicians of his time, to solve the problem of the curve giving the minimum travel time between two points on the plane, most of the great names of mathematics prior to this century it has been associated with the calculus of variations. Variational principles, starting with the principle of least action and the Hamilton-Jacobi theory, have been the driving force behind the conceptual development of physics and continue to contribute powerfully to the mathematical modeling of many other fields of science. Direct variational methods, pioneered by Hilbert, have allowed the spectacular growth of the study of partial differential equations and a good part of nonlinear analysis.

The calculus of variations and the fractional calculus are connected since the nineteenth century. Indeed, in 1823 Niels Heinrik Abel applied the fractional calculus to the solution of an integral equation that arises in the formulation of the tautochrone problem. This problem, sometimes also called the isochrone problem, consists to find the shape of a frictionless wire lying in a vertical plane such that the time of a bead placed on the wire slides to the lowest point of the wire in the same time regardless of where the bead is placed. It turns out that the cycloid is the isochrone as well as the brachistochrone curve, solving simultaneously the brachistochrone problem of the calculus of variations and Abel?s fractional problem (Abel 1965). It was however, only in the twentieth century that both areas joined in a unique research field: the fractional calculus of variations.

The fractional calculus of variations consists in extremizing (minimizing or maximizing) functionals whose Lagrangians contain fractional integrals and/or derivatives. It was born in 1996-1997 \cite{CD:Riewe:1996,CD:Riewe:1997}, when Riewe derived Euler-Lagrange fractional differential equations and showed how nonconservative systems in mechanics can be described using fractional derivatives. It is a remarkable result since frictional and nonconservative forces are beyond the usual macroscopic variational treatment and, consequently, beyond the most advanced methods of classical mechanics. For more details see \cite{dtorres1, dtorres2}.

Recently, Almeida and Morgado \cite{almeida1} studied variational problems where the cost functional involves the tempered Caputo fractional derivative. More precisely, they considered the following functional
\begin{equation}\label{A01}
\begin{aligned}
\mathcal{J}: C^1[a,b] &\to \R\\
x &\to \mathcal{J}(x) = \int_{a}^{b} L(t, x(t), {^{C}}\mathbb{D}_{a^+}^{\alpha, \sigma}x(t))dt,
\end{aligned}
\end{equation}
where $\alpha \in (0,1)$, $\sigma \geq 0$ and $L:[a,b]\times \R^2 \to \R$  is continuously differentiable with respect to the second and third variables. They showed that if $x^*$ is a local minimizer of $\mathcal{J}$, subject to the boundary conditions
$$
x(a) = \beta_1\quad \mbox{and}\quad x(b) = \beta_2,
$$
where $\beta_1, \beta_2\in \R$, then $x^*$ is a solution of the Euler-Lagrange equation
\begin{equation}\label{A02}
\partial_2 L[x](t) + \mathbb{D}_{b^-}^{\alpha, \sigma}(\partial_3 L[x](t)) = 0,\quad t\in [a,b],
\end{equation}
where $L[x](t) = L(t, x(t), \mathbb{D}_{a^+}^{\alpha, \sigma}x(t))$. In the case $\sigma = 0$, (\ref{A01}) reduce to the functional with Caputo fractional derivative, that is 
$$
\begin{aligned}
\mathcal{L}(x) = \int_{a}^{b} L(t, x(t), {^{C}}D_{a^+}^{\alpha}x(t))dt.
\end{aligned}
$$   
By introducing the functional space $E_{\alpha, p}$, which is defined as 
$$
E_{\alpha, p}:= \{u\in L^p\;\;\mbox{satisfying $D_{-}^{\alpha}u\in L^p$ and $I_{-}^{\alpha}\circ D_{-}^{\alpha} u = u$ a.e.}\}
$$
Bourdin \cite{bourdin} derived sufficient conditions to ensure the existence of weak solution $u$ for the following fractional Euler-Lagrange equation
$$
\frac{\partial L}{\partial x} (u, D_{-}^{\alpha}u, t) + D_{+}^{\alpha} \left(\frac{\partial L}{\partial y} (u, D_{-}^{\alpha}u, t) \right) =0,
$$ 
which is a critical point of the functional $\mathcal{L}$.

Motivated by these previous works, in the present paper  we study sufficient conditions to ensure the existence of critical points of the following Lagrangian functional 
$$
\mathcal{J}(u) = \int_{a}^{b}L(u, {^C}\mathbb{D}_{a^+}^{\alpha, \sigma}u, t)dt,
$$
where 
$$
\begin{aligned}
L: \R\times \R \times [a,b] &\to \R\\
(x,y,t)&\to L(x,y,t),
\end{aligned}
$$
$\alpha \in (\frac{1}{2},1)$, $\sigma >0$, $a<b$, ${^C}\mathbb{D}_{a^+}^{\alpha, \sigma}$ is the left Caputo tempered fractional derivative and where the variable $u$ is a function defined almost everywhere on $(a,b)$ with values in $\R$. It is well-known that critical points of the functional $\mathcal{J}$ are characterized by the solutions of the fractional Euler-Lagrange equation
\begin{equation}\label{EL}
\frac{\partial L}{\partial x}(u,{^C}\mathbb{D}_{a^+}^{\alpha, \sigma} u, t) + \mathbb{D}_{b^-}^{\alpha, \sigma}\left(\frac{\partial L}{\partial y}(u, {^C}\mathbb{D}_{a^+}^{\alpha, \sigma}u, t) \right) = 0,
\end{equation}
where $\mathbb{D}_{b^-}^{\alpha, \sigma}$ is the right Riemann-Liouville tempered fractional derivative. 

The concept of symmetry plays an important role in science and engineering. Symmetries are described by transformations, which result in the same object after the transformations are carried out. They are described mathematically by parameter groups of transformations \cite{PO}.

Noether’s principle establishes a relation between the existence of symmetries and the
existence of conservation laws, and is one of the most beautiful results of the calculus of variations
and mechanics \cite{FT,Noether:1918}.

Fractional version of Noether’s theorem is the subject of the present work and we provide a very explicit expression of a constant of motion in terms of symmetry group and Lagrangian for fractional problems of calculus of variations. Our main result provides significant novelty, because Several authors have already proved that kind of result, but in our opinion, none of them obtained such an explicit expression of a fractional constant of motion \cite{Atana, bourdin1,FL}. For more details see Theorem~\ref{CNT} bellow.

\section{Some previous results}

For $\alpha >0$ and $x\geq 0$, the incomplete Gamma function is defined by
$$
\gamma (\alpha, x) = \int_{0}^{x} t^{\alpha-1}e^{-t}dt,
$$
which is convergen for all $\alpha >0$. This function has the following estimates
\begin{equation}\label{g01}
e^{-x}\frac{x^\alpha}{\alpha} \leq \gamma (\alpha, x) \leq
\frac{x^{\alpha}}{\alpha},
\end{equation}
and by using integration by parts, we see that
\begin{equation}\label{g02}
\gamma (\alpha+1, x) = \alpha \gamma (\alpha, x) - x^{\alpha}e^{-x}.
\end{equation}
This equality can be used to extend the definition of $\gamma (\alpha, x)$ to negative, non integer values of $\alpha$. For more  details the reader's can see \cite{fisher}.

For $\alpha \in (0,1)$ and $\sigma > 0$, the left and right Riemann-Liouville tempered fractional integrals of order $\alpha$ are defined as
\begin{equation}\label{T01}
\mathbb{I}_{a^+}^{\alpha, \sigma}u(x) = \frac{1}{\Gamma (\alpha)}\int_{a}^{x} (x-s)^{\alpha-1}e^{-\sigma(x-s)}u(s)ds,\;\;x>a,
\end{equation}
and
\begin{equation}\label{T02}
\mathbb{I}_{b^-}^{\alpha, \sigma}u(x) = \frac{1}{\Gamma (\alpha)} \int_{x}^{b} (s-x)^{\alpha-1}e^{-\sigma(s-x)}u(s)ds,\;\;x< b,
\end{equation}
respectively. For $\alpha=0$, we define $\mathbb{I}_{a^+}^{0, 0}u(x)=\mathbb{I}_{b^-}^{0, 0}u(x) =u(x)$.

Moreover, for $\alpha \in (0,1)$ and $\sigma >0$, the left and right Riemann-Liouville tempered fractional derivatives of order $\alpha$ are defined as
\begin{equation}\label{T03}
\mathbb{D}_{a^+}^{\alpha,\sigma}u(x) = (e^{-\sigma x}{_{a}}D_{x}^{\alpha} e^{\sigma x})u(x) = \frac{e^{-\sigma x}}
{\Gamma (1-\alpha)} \frac{d}{dx}\int_{a}^{x} (x-s)^{-\alpha}e^{\sigma s}u(s)ds,\;\;x>a,
\end{equation}
and
\begin{equation}\label{T04}
\mathbb{D}_{b^-}^{\alpha; \sigma}u(x) = (e^{\sigma x}{_{x}}D_{b}^{\alpha} e^{-\sigma x})u(x) =
 \frac{-e^{\sigma x}}{\Gamma (1-\alpha)} \frac{d}{dx} \int_{x}^{b} (s-x)^{-\alpha}e^{-\sigma s}u(s)ds,\;\;x <
 b,
\end{equation}
respectively. An alternative approach in defining the tempered fractional derivatives is based on the left-sided and right-sided tempered Caputo fractional derivatives of order $\alpha$, defined, respectively, as
\begin{equation}\label{T05}
{^{C}}\mathbb{D}_{a^+}^{\alpha,\sigma} u(x) = \left( e^{-\sigma x}
{^C_{a}}D_{x}^{\alpha} e^{\sigma x}\right)u(x) = \frac{e^{-\sigma
x}}{\Gamma (1-\alpha)}\int_{a}^{x}(x-s)^{-\alpha} [e^{\sigma
s}u(s)]'ds,\;\;x>a,
\end{equation}
and
\begin{equation}\label{T06}
{^{C}}\mathbb{D}_{b^-}^{\alpha,\sigma}u(x) = \left(e^{\sigma x} {^C_x}D_{b}^{\alpha} e^{-\sigma x} \right)u(x) = \frac{-e^{\sigma x}}{\Gamma (1-\alpha)}\int_{x}^{b}(s-x)^{-\alpha} [e^{-\sigma s}u(s)]'ds,\;\;x < b.
\end{equation}

This operators have the following properties.
\begin{lemma}\label{lm01}
\cite{torres3} Let $\alpha >0$, $\sigma > 0$ and $u\in AC[a,b]$. Then $\mathbb{I}_{a^+}^{\alpha, \sigma} u, \mathbb{I}_{b^-}^{\alpha, \sigma}u$ are well defined. Moreover
\begin{equation}\label{T07}
\mathbb{I}_{a^+}^{\alpha, \sigma} u(x) = \frac{u(a)}{\sigma^\alpha
\Gamma (\alpha)} \gamma (\alpha, \sigma (x-a)) +
\frac{1}{\sigma^\alpha \Gamma (\alpha)} \int_{a}^{x} \gamma
(\alpha, \sigma (x-t))u'(t)dt,
\end{equation}
and
\begin{equation}\label{T08}
\mathbb{I}_{b^-}^{\alpha, \sigma} u(x) = \frac{u(b)}{\sigma^\alpha \Gamma (\alpha)} \gamma (\alpha, \sigma (b-x)) - \frac{1}{\sigma^\alpha \Gamma (\alpha)}\int_{x}^{b} \gamma (\alpha, \sigma (t-x))u'(t)dt.
\end{equation}
\end{lemma}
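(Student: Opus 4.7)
The plan is to first verify integrability (well-definedness) by a direct domination estimate, then derive both representation formulas by recognizing the tempered kernel as an exact derivative of the incomplete Gamma function and integrating by parts. Since $u \in AC[a,b]$ is continuous on a compact interval it is bounded by some $M>0$, and since $e^{-\sigma(x-s)} \leq 1$ on $[a,x]$, the integrand defining $\mathbb{I}_{a^+}^{\alpha,\sigma}u(x)$ is dominated by $M(x-s)^{\alpha-1}$, which is integrable on $[a,x]$ for $\alpha>0$. The same argument (with $e^{-\sigma(s-x)}\leq 1$ on $[x,b]$) handles $\mathbb{I}_{b^-}^{\alpha,\sigma}u(x)$.

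For (\ref{T07}), the key observation is that by the chain rule together with $\frac{d}{dy}\gamma(\alpha,y)=y^{\alpha-1}e^{-y}$,
\begin{equation*}
\frac{d}{ds}\gamma\bigl(\alpha,\sigma(x-s)\bigr) = -\sigma^{\alpha}(x-s)^{\alpha-1}e^{-\sigma(x-s)},
\end{equation*}
so the kernel in (\ref{T01}) is, up to the factor $-1/\sigma^{\alpha}$, an exact $s$-derivative. Substituting and integrating by parts—which is legitimate because $u\in AC[a,b]$ (so $u'$ exists almost everywhere, lies in $L^{1}[a,b]$, and $u$ itself is absolutely continuous)—produces
\begin{equation*}
\mathbb{I}_{a^+}^{\alpha,\sigma}u(x) = \frac{-1}{\sigma^{\alpha}\Gamma(\alpha)}\left[\gamma\bigl(\alpha,\sigma(x-s)\bigr)u(s)\Big|_{s=a}^{s=x} - \int_{a}^{x}\gamma\bigl(\alpha,\sigma(x-s)\bigr)u'(s)\,ds\right].
\end{equation*}
The boundary contribution at $s=x$ vanishes since $\gamma(\alpha,0)=0$, while at $s=a$ it yields $-\gamma(\alpha,\sigma(x-a))u(a)$; combining the two pieces gives precisely (\ref{T07}).

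The formula (\ref{T08}) follows by the exact same procedure applied to (\ref{T02}). Here the substitution is $y=\sigma(s-x)$, whose derivative in $s$ is now $+\sigma$, so $\frac{d}{ds}\gamma(\alpha,\sigma(s-x))=\sigma^{\alpha}(s-x)^{\alpha-1}e^{-\sigma(s-x)}$; the boundary evaluation at $s=b$ contributes $\gamma(\alpha,\sigma(b-x))u(b)$ while the contribution at $s=x$ is zero, and integration by parts produces the minus sign in front of the integral in (\ref{T08}). I expect no real conceptual obstacle: the entire content of the lemma is the identification of the kernel as an $s$-derivative of $\gamma$ plus one integration by parts, and the only place that needs care is the bookkeeping of signs arising from the two different orientations of the chain rule and from the sign conventions used in the left- and right-sided integrals.
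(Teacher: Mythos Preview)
Your proof is correct. The paper does not actually supply its own proof of this lemma---it is quoted from \cite{torres3} without argument---so there is nothing to compare against here; your approach (identify the tempered kernel as $-\sigma^{-\alpha}\frac{d}{ds}\gamma(\alpha,\sigma(x-s))$ and integrate by parts against $u\in AC[a,b]$) is the natural one and the sign bookkeeping is handled correctly.
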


\begin{theorem}\label{Lp}
\cite{torres3} Let $\alpha \in (0,1)$, $\sigma >0 $, $p\in [1, \infty]$. Then, the tempered fractional integrals of Riemann-Liouville $\mathbb{I}_{a^+}^{\alpha, \sigma}, \mathbb{I}_{b^-}^{\alpha, \sigma}: L^p(a,b) \to L^p(a,b)$ are bounded. Moreover
\begin{equation}\label{09}
\|\mathbb{I}_{a^+}^{\alpha, \sigma}u\|_{L^p(a,b)} \leq
\frac{\gamma (\alpha, \sigma (b-a))}{\sigma^\alpha \Gamma
(\alpha)} \|u\|_{L^p(a,b)},
\end{equation}
and
\begin{equation}\label{10}
\|\mathbb{I}_{b^-}^{\alpha, \sigma}u\|_{L^p(a,b)} \leq
\frac{\gamma (\alpha, \sigma (b-a))}{\sigma^\alpha \Gamma
(\alpha)}\|u\|_{L^p(a,b)}.
\end{equation}
\end{theorem}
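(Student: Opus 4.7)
The plan is to recognize the tempered fractional integral as a convolution of $u$ (extended by zero outside $[a,b]$) with the truncated kernel
$$
K(t) \;=\; \frac{t^{\alpha-1} e^{-\sigma t}}{\Gamma(\alpha)} \, \chi_{(0,\,b-a]}(t),
$$
so that $\mathbb{I}_{a^+}^{\alpha,\sigma}u(x) = (K \ast u)(x)$ for $x\in(a,b)$. By Young's convolution inequality (equivalently, Minkowski's integral inequality applied directly to the defining integral), one obtains
$$
\bigl\| \mathbb{I}_{a^+}^{\alpha,\sigma} u \bigr\|_{L^p(a,b)} \;\leq\; \|K\|_{L^1(0,\,b-a)}\,\|u\|_{L^p(a,b)},
$$
valid for every $p\in[1,\infty]$. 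The cases $p=1$ and $p=\infty$ also follow transparently: for $p=\infty$ one pulls $\|u\|_\infty$ outside, while for $p=1$ one applies Fubini to swap the order of integration in $\int_a^b |\mathbb{I}_{a^+}^{\alpha,\sigma}u(x)|\,dx$.

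The second step is the computation of $\|K\|_{L^1}$. With the substitution $\tau = \sigma t$,
$$
\|K\|_{L^1} \;=\; \frac{1}{\Gamma(\alpha)} \int_0^{b-a} t^{\alpha-1} e^{-\sigma t}\,dt \;=\; \frac{1}{\sigma^\alpha \Gamma(\alpha)} \int_0^{\sigma(b-a)} \tau^{\alpha-1} e^{-\tau}\,d\tau \;=\; \frac{\gamma(\alpha,\sigma(b-a))}{\sigma^\alpha \Gamma(\alpha)},
$$
by the very definition of the incomplete Gamma function. Combining with the previous step yields exactly inequality \eqref{09}. For the right-sided operator \eqref{10}, the argument is symmetric: the change of variables $x \mapsto a+b-x$ intertwines $\mathbb{I}_{a^+}^{\alpha,\sigma}$ with $\mathbb{I}_{b^-}^{\alpha,\sigma}$ and is an isometry of $L^p(a,b)$, so the same constant appears.

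There is no serious obstacle here, only a bookkeeping point worth attention: the estimate must be uniform in $x\in(a,b)$, so when using Fubini for $p=1$ one actually gets the inner integral $\gamma(\alpha,\sigma(b-s))/\sigma^\alpha$ rather than $\gamma(\alpha,\sigma(b-a))/\sigma^\alpha$, and one uses the obvious monotonicity $\gamma(\alpha,\sigma(b-s))\leq \gamma(\alpha,\sigma(b-a))$ to conclude. An alternative, entirely self-contained route avoiding Young's inequality is to write $(x-s)^{\alpha-1}e^{-\sigma(x-s)} = \bigl[(x-s)^{\alpha-1}e^{-\sigma(x-s)}\bigr]^{1/p'}\bigl[(x-s)^{\alpha-1}e^{-\sigma(x-s)}\bigr]^{1/p}$ and apply Hölder with exponents $p,p'$, then integrate in $x$ and apply Fubini; this gives the same constant $\gamma(\alpha,\sigma(b-a))/(\sigma^\alpha\Gamma(\alpha))$ and makes the argument uniform across all $p\in[1,\infty]$.
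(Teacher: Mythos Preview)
Your argument is correct. The paper does not supply its own proof of this theorem; it is quoted from \cite{torres3} without demonstration, so there is no in-paper proof to compare against. Your approach---writing $\mathbb{I}_{a^+}^{\alpha,\sigma}u$ as convolution with the kernel $K(t)=\Gamma(\alpha)^{-1}t^{\alpha-1}e^{-\sigma t}\chi_{(0,b-a]}(t)$, invoking Young's inequality (or equivalently Minkowski's integral inequality), and then evaluating $\|K\|_{L^1}$ via the substitution $\tau=\sigma t$ to recover the incomplete Gamma function---is the standard route for $L^p$ bounds of fractional-type integral operators and is almost certainly what the cited reference does as well. The reflection $x\mapsto a+b-x$ to pass from the left operator to the right one is also standard. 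Your remark about the $p=1$ case and the monotonicity of $\gamma(\alpha,\cdot)$ is a nice bit of care, though already subsumed by the convolution/Young formulation.
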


\begin{theorem}\label{teo2-1}
\cite{torres3} Let $p>1$, $\alpha \in \left(0, \frac{1}{p} \right)$, $\sigma >0$ and $p_{\alpha}^{*} = \frac{p}{1-\alpha p}$. Then, the Riemann-Liouville tempered fractional integrals $\mathbb{I}_{a^+}^{\alpha, \sigma}, \mathbb{I}_{b^-}^{\alpha, \sigma}$ are bounded from $L^p(a,b)$ into $L^{p_{\alpha}^{*}}(a,b)$.
\end{theorem}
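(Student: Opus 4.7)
The plan is to reduce the claim to the classical (untempered) Hardy--Littlewood--Sobolev bound for the Riemann--Liouville fractional integral on a bounded interval. The essential observation is that for $\sigma>0$ and $s \leq x$ we have $e^{-\sigma(x-s)} \leq 1$, so the tempered kernel is dominated pointwise by the untempered one. This yields
$$
|\mathbb{I}_{a^+}^{\alpha,\sigma} u(x)| \leq \frac{1}{\Gamma(\alpha)}\int_{a}^{x}(x-s)^{\alpha-1}|u(s)|\,ds = I_{a^+}^{\alpha}|u|(x),
$$
for a.e.\ $x\in(a,b)$, and analogously $|\mathbb{I}_{b^-}^{\alpha,\sigma} u(x)| \leq I_{b^-}^{\alpha}|u|(x)$ using $(s-x)^{\alpha-1}$. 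Thus the $L^{p_\alpha^*}$-norm of the tempered integral is controlled by the $L^{p_\alpha^*}$-norm of the untempered integral of $|u|$.

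Second, I would invoke the classical Hardy--Littlewood theorem for Riemann--Liouville integrals on a bounded interval (see, e.g., Samko--Kilbas--Marichev, Theorem~3.5): under the assumptions $p>1$ and $\alpha\in(0,1/p)$, the operators $I_{a^+}^{\alpha}, I_{b^-}^{\alpha}:L^p(a,b)\to L^{p_\alpha^*}(a,b)$ are bounded with $p_\alpha^* = p/(1-\alpha p)$. Composing with the pointwise majorization above immediately gives the desired boundedness of $\mathbb{I}_{a^+}^{\alpha,\sigma}$ and $\mathbb{I}_{b^-}^{\alpha,\sigma}$, and in fact furnishes the explicit norm bound
$$
\|\mathbb{I}_{a^+}^{\alpha,\sigma}u\|_{L^{p_\alpha^*}(a,b)} \leq C_{\alpha,p,b-a}\,\|u\|_{L^p(a,b)},
$$
with the same constant as in the untempered case (the exponential factor only improves the constant, not the exponent).

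If a self-contained argument were preferred, I would instead use Young's inequality in weak/Lorentz form (O'Neil's inequality): the kernel $k_{\alpha,\sigma}(x)=\Gamma(\alpha)^{-1}x^{\alpha-1}e^{-\sigma x}\chi_{(0,b-a)}(x)$ lies in the weak space $L^{1/(1-\alpha),\infty}$ because $k_{\alpha,\sigma}(x)\leq \Gamma(\alpha)^{-1}x^{\alpha-1}$, and convolution with a kernel in $L^{r,\infty}$ maps $L^p$ to $L^q$ whenever $1/q = 1/p + 1/r - 1$, which with $r = 1/(1-\alpha)$ gives exactly $q = p_\alpha^*$. No genuine obstacle arises: the tempering is a harmless multiplicative factor, and the only nontrivial ingredient is the classical HLS-type inequality, which is a standard result. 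The main technical care is simply to ensure that the domination argument is uniform in $x$ and that the constants in \eqref{09}, \eqref{10} do not blow up in the chosen regime of $\alpha$, both of which are clear from the integral representation.
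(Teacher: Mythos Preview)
The paper does not actually prove this statement: Theorem~\ref{teo2-1} is stated with the citation \cite{torres3} and no proof is given in the text, so there is no ``paper's own proof'' to compare against. Your argument is correct and is the natural one: the pointwise bound $e^{-\sigma(x-s)}\le 1$ gives $|\mathbb{I}_{a^+}^{\alpha,\sigma}u|\le I_{a^+}^{\alpha}|u|$ (and similarly on the right), after which the classical Hardy--Littlewood theorem for the untempered Riemann--Liouville integral on a bounded interval (Samko--Kilbas--Marichev, Theorem~3.5) yields boundedness from $L^p$ to $L^{p_\alpha^*}$. The alternative route via O'Neil's weak-type Young inequality is equally valid and essentially a repackaging of the same estimate. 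This is almost certainly the argument in the cited reference \cite{torres3} as well, since tempering is a purely damping factor and the Sobolev exponent $p_\alpha^*$ is dictated entirely by the power $x^{\alpha-1}$.
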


The following integration by parts theorem for Riemann-Liouville tempered fractional integrals holds.
\begin{theorem}\label{IPP}
Let $\alpha \in (0,1)$, $\sigma >0$, $p\in (1, \infty)$, $q\in (1, \infty)$ and
$$
\frac{1}{p} + \frac{1}{q} \leq 1+\alpha.
$$
If $u\in L^p(a,b)$ and $v\in L^q(a,b)$, then
\begin{equation}\label{pp01}
\int_{a}^{b} \mathbb{I}_{a^+}^{\alpha, \sigma}u(x) v(x)dx = \int_{a}^{b} u(x) \mathbb{I}_{b^-}^{\alpha, \sigma}v(x)dx.
\end{equation}
\end{theorem}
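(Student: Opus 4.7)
The plan is to reduce $(\ref{pp01})$ to the classical Hardy-Littlewood integration-by-parts formula for the Riemann-Liouville fractional integrals via an exponential conjugation, or equivalently to run a direct Fubini argument on the corresponding double integral.

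For the reduction I would first observe that factoring the exponential $e^{-\sigma(x-s)}=e^{-\sigma x}e^{\sigma s}$ inside $(\ref{T01})$ and $e^{-\sigma(s-x)}=e^{\sigma x}e^{-\sigma s}$ inside $(\ref{T02})$ yields the identities
\begin{equation*}
\mathbb{I}_{a^+}^{\alpha,\sigma} u(x)=e^{-\sigma x}\,I_{a^+}^{\alpha}\bigl(e^{\sigma\cdot}u\bigr)(x),\qquad \mathbb{I}_{b^-}^{\alpha,\sigma} v(x)=e^{\sigma x}\,I_{b^-}^{\alpha}\bigl(e^{-\sigma\cdot}v\bigr)(x),
\end{equation*}
where $I_{a^+}^{\alpha}$, $I_{b^-}^{\alpha}$ denote the classical (non-tempered) Riemann-Liouville integrals. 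Since $[a,b]$ is bounded, multiplication by $e^{\pm\sigma x}$ preserves every $L^r(a,b)$, so $e^{\sigma\cdot}u\in L^p(a,b)$ and $e^{-\sigma\cdot}v\in L^q(a,b)$ with the same exponents $p,q$. Moving the $e^{-\sigma x}$ factor from the left-hand side of $(\ref{pp01})$ onto $v$, applying the classical Hardy-Littlewood identity to the pair $(e^{\sigma\cdot}u,e^{-\sigma\cdot}v)$, and then re-absorbing the $e^{\sigma x}$ factor into $\mathbb{I}_{b^-}^{\alpha,\sigma}$ gives precisely the right-hand side.

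For a self-contained alternative I would write the left-hand side of $(\ref{pp01})$ via $(\ref{T01})$ as the double integral
\begin{equation*}
\frac{1}{\Gamma(\alpha)}\iint_{a\le s\le x\le b}(x-s)^{\alpha-1}e^{-\sigma(x-s)}u(s)v(x)\,ds\,dx,
\end{equation*}
exchange the order of integration, and recognise the inner $x$-integral as $\Gamma(\alpha)\,\mathbb{I}_{b^-}^{\alpha,\sigma}v(s)$. The key, and only non-trivial, step is the absolute integrability check required for Fubini-Tonelli: bounding $e^{-\sigma(x-s)}\le 1$ reduces it to the integrability of $(x-s)^{\alpha-1}|u(s)||v(x)|$ on the triangle. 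By Hölder's inequality combined with the mapping properties of the fractional integral (essentially Theorems~\ref{Lp} and \ref{teo2-1} in the tempered setting, or the Hardy-Littlewood-Sobolev inequality in the classical one), this double integral is finite exactly under the hypothesis $\tfrac{1}{p}+\tfrac{1}{q}\le 1+\alpha$. This sharp integrability threshold is the main obstacle of the proof; once it is secured, the rest is bookkeeping, since the exponential weight is uniformly bounded on $[a,b]$ and therefore neither improves nor worsens the integrability class.
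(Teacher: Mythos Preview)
The paper states Theorem~\ref{IPP} without proof and, unlike the neighbouring results in Section~2, without an explicit citation either; there is therefore no argument in the paper to compare your proposal against.

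On its own merits your proposal is correct. The exponential conjugation $\mathbb{I}_{a^+}^{\alpha,\sigma}u=e^{-\sigma x}I_{a^+}^{\alpha}(e^{\sigma\cdot}u)$ and $\mathbb{I}_{b^-}^{\alpha,\sigma}v=e^{\sigma x}I_{b^-}^{\alpha}(e^{-\sigma\cdot}v)$ follows immediately from the definitions \eqref{T01}--\eqref{T02}, multiplication by $e^{\pm\sigma\cdot}$ preserves every $L^{r}(a,b)$ on the bounded interval, and the classical Riemann--Liouville integration-by-parts identity under the hypothesis $\tfrac{1}{p}+\tfrac{1}{q}\le 1+\alpha$ then yields \eqref{pp01}. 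Your alternative direct Fubini route is equally valid: bounding $e^{-\sigma(x-s)}\le 1$ reduces the Tonelli check to the non-tempered case, and the condition $\tfrac{1}{p}+\tfrac{1}{q}\le 1+\alpha$ is precisely the sharp threshold (via Theorem~\ref{Lp} when $1/p+1/q\le 1$, and via Theorem~\ref{teo2-1} or Hardy--Littlewood--Sobolev otherwise) ensuring that $\int_a^b |v|\,I_{a^+}^{\alpha}|u|\,dx<\infty$. Either approach is standard and complete.
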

Now we consider some smoothness properties of the Riemann-Liouville tempered fractional integrals

\begin{theorem}\label{conti}
\cite{torres3} Let $\alpha \in (0,1)$, $\sigma >0$ and $u\in C[a,b]$. Then the tempered fractional integrals of Riemann-Liouville $\mathbb{I}_{a^+}^{\alpha, \sigma}u, \mathbb{I}_{b^-}^{\alpha, \sigma}$ are continuos on $[a,b]$ and
\begin{equation}\label{limit}
\lim_{x\to a^+} \mathbb{I}_{a^+}^{\alpha, \sigma}u(x) = 0\quad \mbox{and}\quad \lim_{x\to b^-} \mathbb{I}_{b^-}^{\alpha, \sigma}u(x) = 0.
\end{equation}
Moreover,
$$
\|\mathbb{I}_{a^+}^{\alpha, \sigma}u\|_\infty \leq
\frac{1}{\sigma^\alpha \Gamma (\alpha)} \gamma (\alpha, \sigma
(b-a))\|u\|_\infty,
$$
and
$$
\|\mathbb{I}_{b^-}^{\alpha, \sigma}u\|_\infty \leq
\frac{1}{\sigma^\alpha \Gamma (\alpha)} \gamma (\alpha, \sigma
(b-a))\|u\|_\infty.
$$
\end{theorem}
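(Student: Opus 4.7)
The three assertions---the supremum bound, the endpoint limits, and the continuity---can all be handled starting from a single pointwise estimate. I would first pull $\|u\|_\infty$ out of the defining integral~(\ref{T01}) and compute
\begin{equation*}
\int_a^x (x-s)^{\alpha-1}e^{-\sigma(x-s)}\,ds \;=\; \frac{1}{\sigma^\alpha}\int_0^{\sigma(x-a)}\tau^{\alpha-1}e^{-\tau}\,d\tau \;=\; \frac{\gamma(\alpha,\sigma(x-a))}{\sigma^\alpha},
\end{equation*}
via the substitution $\tau=\sigma(x-s)$. Because $\gamma(\alpha,\cdot)$ is non-decreasing with $\gamma(\alpha,0)=0$, this yields at once both the supremum bound $\|\mathbb{I}_{a^+}^{\alpha,\sigma}u\|_\infty \leq \gamma(\alpha,\sigma(b-a))\|u\|_\infty/(\sigma^\alpha\Gamma(\alpha))$ and the limit $\mathbb{I}_{a^+}^{\alpha,\sigma}u(x)\to 0$ as $x\to a^+$. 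The right-sided counterparts follow from the mirror substitution $\tau = \sigma(s-x)$ in~(\ref{T02}).

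For continuity at an arbitrary point of $[a,b]$ I would normalise the integration interval to $[0,1]$ via the affine change of variable $s = a + (x-a)t$, giving
\begin{equation*}
\mathbb{I}_{a^+}^{\alpha,\sigma}u(x) \;=\; \frac{(x-a)^\alpha}{\Gamma(\alpha)}\int_0^1 (1-t)^{\alpha-1}\,e^{-\sigma(x-a)(1-t)}\,u\bigl(a+(x-a)t\bigr)\,dt.
\end{equation*}
For each fixed $t\in(0,1)$ the integrand is continuous in $x\in[a,b]$, using continuity of $u$ and of the exponential factor, and in absolute value it is dominated by the $x$-independent function $\|u\|_\infty(1-t)^{\alpha-1}$, which is integrable on $(0,1)$ since $\alpha>0$. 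Lebesgue's dominated convergence theorem then yields continuity of the integral in $x$; multiplying by the continuous prefactor $(x-a)^\alpha$ delivers continuity of $\mathbb{I}_{a^+}^{\alpha,\sigma}u$ on all of $[a,b]$, including at the left endpoint, where the value is $0$. The symmetric analysis for $\mathbb{I}_{b^-}^{\alpha,\sigma}u$ uses $s = b-(b-x)t$.

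The main obstacle is the singularity of the kernel $(x-s)^{\alpha-1}$ at $s=x$, which blocks a direct dominated-convergence argument applied to~(\ref{T01}): different values of $x$ place the singularity at different points, so one cannot find a common integrable majorant in the original variable $s$. The affine substitution above is the key move, since it transfers the singularity to the fixed, $x$-independent weight $(1-t)^{\alpha-1}$ and leaves only continuous, uniformly bounded dependence on $x$ in the remaining factors, after which dominated convergence runs without further tricks.
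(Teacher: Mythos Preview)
Your argument is correct. The pointwise kernel computation via $\tau=\sigma(x-s)$ cleanly yields both the $\|\cdot\|_\infty$ bound and the endpoint limits, and the affine rescaling $s=a+(x-a)t$ is exactly the right device for continuity: it freezes the singularity at $t=1$ so that the majorant $(1-t)^{\alpha-1}\|u\|_\infty$ is $x$-independent, making dominated convergence legitimate. The endpoint $x=a$ is handled as well, since the integral stays bounded while the prefactor $(x-a)^\alpha$ vanishes.

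Note, however, that the paper does not supply its own proof of this theorem: it is stated with a citation to \cite{torres3} and no argument is given in the present text. So there is nothing to compare against here; your proof stands on its own as a complete justification of the cited result.
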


\begin{theorem}\label{HC}
\cite{torresT1} Let $\alpha \in (\frac{1}{2}, 1)$ and $\sigma >0$. Then, for each $u\in L^2(a,b)$, $\mathbb{I}_{a^+}^{\alpha, \sigma}u\in H^{\alpha-\frac{1}{2}}(a,b)$ and
$$
\lim_{x\to a^+} \mathbb{I}_{a^+}^{\alpha, \sigma}u(x) = 0,
$$
where $H^{\alpha-\frac{1}{2}}(a,b)$ denotes the H\"older space of order $\alpha-\frac{1}{2}>0$.
\end{theorem}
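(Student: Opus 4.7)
The plan is to establish, for $u \in L^2(a,b)$, two estimates: a pointwise bound $|\mathbb{I}_{a^+}^{\alpha,\sigma}u(x)| \leq C(x-a)^{\alpha-\frac{1}{2}}\|u\|_{L^2(a,b)}$, which immediately yields the limit $\mathbb{I}_{a^+}^{\alpha,\sigma}u(x) \to 0$ as $x \to a^+$ since $\alpha > 1/2$, and an increment bound
$$
|\mathbb{I}_{a^+}^{\alpha,\sigma}u(y) - \mathbb{I}_{a^+}^{\alpha,\sigma}u(x)| \leq C|y-x|^{\alpha-\frac{1}{2}}\|u\|_{L^2(a,b)}, \qquad a \leq x < y \leq b,
$$
which together with continuity expresses membership in $H^{\alpha-1/2}(a,b)$. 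The pointwise bound follows directly from Cauchy--Schwarz and the trivial estimate $e^{-\sigma(x-s)} \leq 1$: one has $\int_0^{x-a} r^{2\alpha-2}\,dr = (x-a)^{2\alpha-1}/(2\alpha-1)$, which is finite precisely because $\alpha > 1/2$.

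For the increment bound I split
$$
\mathbb{I}_{a^+}^{\alpha,\sigma}u(y) - \mathbb{I}_{a^+}^{\alpha,\sigma}u(x) = \frac{1}{\Gamma(\alpha)}\int_a^x [g_y(s)-g_x(s)]u(s)\,ds + \frac{1}{\Gamma(\alpha)}\int_x^y g_y(s)u(s)\,ds =: I_1 + I_2,
$$
where $g_r(s) := (r-s)^{\alpha-1}e^{-\sigma(r-s)}$. The integral $I_2$ is handled by the same Cauchy--Schwarz argument as the pointwise bound, giving $|I_2| \leq C(y-x)^{\alpha-1/2}\|u\|_{L^2}$. For $I_1$, Cauchy--Schwarz again reduces matters to showing $\int_a^x [g_x(s)-g_y(s)]^2\,ds \leq C(y-x)^{2\alpha-1}$. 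Setting $h := y-x$, I decompose
$$
g_x(s) - g_y(s) = e^{-\sigma(x-s)}\bigl[(x-s)^{\alpha-1}-(y-s)^{\alpha-1}\bigr] + (y-s)^{\alpha-1}\bigl[e^{-\sigma(x-s)}-e^{-\sigma(y-s)}\bigr],
$$
with both summands non-negative, and bound their squared contributions separately using $(A+B)^2 \leq 2A^2 + 2B^2$.

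For the first (power-law) piece, the change of variable $r = x-s$ followed by the rescaling $r = ht$ bounds the integral by $h^{2\alpha-1}\int_0^\infty [t^{\alpha-1}-(t+1)^{\alpha-1}]^2\,dt$; the universal integral is finite because near $t=0$ the integrand behaves like $t^{2\alpha-2}$ (integrable exactly when $\alpha > 1/2$), and at infinity the Taylor expansion $t^{\alpha-1}-(t+1)^{\alpha-1} \sim (1-\alpha)t^{\alpha-2}$ gives decay $\sim t^{2\alpha-4}$. For the second (exponential) piece, $1 - e^{-\sigma h} \leq \sigma h$ together with the fact that $\int_a^x (y-s)^{2\alpha-2}\,ds$ is bounded uniformly in $x,y$ yields a bound of order $h^2$, which is $\leq C(b-a)^{3-2\alpha}h^{2\alpha-1}$ since $3-2\alpha > 0$. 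Adding the two contributions gives the desired estimate on $I_1$. The main obstacle is the power-law piece: scaling to the dimensionless universal integral is what extracts the factor $h^{2\alpha-1}$, and the hypothesis $\alpha > 1/2$ enters essentially in integrability at $0$. The tempering parameter $\sigma$ produces only lower-order corrections, controlled by the Lipschitz behaviour of $r \mapsto e^{-\sigma r}$ and the uniform bound $e^{-\sigma r} \leq 1$.
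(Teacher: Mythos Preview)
The paper does not prove this theorem: it is quoted verbatim from \cite{torresT1} and no argument is supplied in the present text, so there is no ``paper's own proof'' to compare against.

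Your argument, however, is correct and essentially self-contained. The pointwise bound via Cauchy--Schwarz and $e^{-\sigma r}\le 1$ is immediate and gives both the limit at $a^+$ and continuity. The increment estimate is handled by the natural splitting into $I_1$ and $I_2$; the key step, reducing the power-law part of $I_1$ to the scale-invariant integral $\int_0^\infty [t^{\alpha-1}-(t+1)^{\alpha-1}]^2\,dt$, is exactly the standard device used for the untempered Riemann--Liouville operator (Hardy--Littlewood, and more explicitly Samko--Kilbas--Marichev), and your integrability check at $0$ and $\infty$ is correct. Treating the exponential correction separately via $1-e^{-\sigma h}\le \sigma h$ and observing it contributes only $O(h^2)=o(h^{2\alpha-1})$ is clean and shows clearly that the tempering does not affect the H\"older exponent. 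One small cosmetic point: when you say ``both summands non-negative'' you are using $\alpha<1$ (so $r\mapsto r^{\alpha-1}$ is decreasing) and the monotonicity of $r\mapsto e^{-\sigma r}$; this is true but worth stating, since the sign is what lets you pass to $(A+B)^2\le 2A^2+2B^2$ without absolute values creeping in elsewhere.
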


Considering the Riemann-Liouville and Caputo tempered fractional
derivative we have the following properties.
\begin{theorem}\label{teo01}
\cite{torresT1} Let $\alpha \in (0,1)$, $\sigma >0$ and $u\in AC[a,b]$. Then
\begin{equation}\label{T09}
\mathbb{D}_{a^+}^{\alpha, \sigma}u(x) =
\frac{u(a)}{\Gamma(1-\alpha)} (x-a)^{-\alpha}e^{-\sigma (x-a)}
+{^{C}}\mathbb{D}_{a^+}^{\alpha, \sigma}u(x),
\end{equation}
and
\begin{equation}\label{T10}
\mathbb{D}_{b^-}^{\alpha, \sigma} u(x) = \frac{u(b)}{\Gamma (1-\alpha)}(b-x)^{-\alpha}e^{-\sigma (b-x)} + {^{C}}\mathbb{D}_{b^-}^{\alpha, \sigma}u(x).
\end{equation}
\end{theorem}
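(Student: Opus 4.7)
The plan is to reduce the identities (T09) and (T10) to the well-known classical relation between Riemann--Liouville and Caputo fractional derivatives, via the conjugation identities that define the tempered operators. Recall from (T03) and (T05) that
$$
\mathbb{D}_{a^+}^{\alpha,\sigma} = e^{-\sigma x}\,{_{a}}D_{x}^{\alpha}\,e^{\sigma x}, \qquad {^{C}}\mathbb{D}_{a^+}^{\alpha,\sigma} = e^{-\sigma x}\,{^{C}_{a}}D_{x}^{\alpha}\,e^{\sigma x},
$$
so the tempered derivatives are simply multiplicative conjugates of their classical counterparts by the factor $e^{\sigma x}$.

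First I would verify that the auxiliary function $v(x) := e^{\sigma x}u(x)$ lies in $AC[a,b]$ whenever $u \in AC[a,b]$. This follows from the product rule for absolutely continuous functions together with the fact that $e^{\sigma x}$ is Lipschitz on $[a,b]$. Next, I would invoke the classical identity (see e.g.\ Samko--Kilbas--Marichev): for any $v \in AC[a,b]$,
$$
{_{a}}D_{x}^{\alpha}v(x) = \frac{v(a)}{\Gamma(1-\alpha)}(x-a)^{-\alpha} + {^{C}_{a}}D_{x}^{\alpha}v(x).
$$
Applying this to $v(x) = e^{\sigma x}u(x)$, so that $v(a) = e^{\sigma a}u(a)$, yields
$$
{_{a}}D_{x}^{\alpha}\bigl(e^{\sigma x}u(x)\bigr) = \frac{e^{\sigma a}u(a)}{\Gamma(1-\alpha)}(x-a)^{-\alpha} + {^{C}_{a}}D_{x}^{\alpha}\bigl(e^{\sigma x}u(x)\bigr).
$$
Multiplying both sides by $e^{-\sigma x}$ and observing that $e^{-\sigma x}\cdot e^{\sigma a} = e^{-\sigma(x-a)}$, the left-hand side becomes $\mathbb{D}_{a^+}^{\alpha,\sigma}u(x)$ and the last term becomes ${^{C}}\mathbb{D}_{a^+}^{\alpha,\sigma}u(x)$, giving (T09).

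For (T10) the argument is symmetric: one applies the analogous classical identity
$$
{_{x}}D_{b}^{\alpha}w(x) = \frac{w(b)}{\Gamma(1-\alpha)}(b-x)^{-\alpha} + {^{C}_{x}}D_{b}^{\alpha}w(x)
$$
to $w(x) := e^{-\sigma x}u(x) \in AC[a,b]$, then multiplies by $e^{\sigma x}$ and uses the conjugation identities (T04) and (T06). The only mildly delicate point is checking the endpoint value $w(b) = e^{-\sigma b}u(b)$, which combines with $e^{\sigma x}$ to produce the claimed factor $e^{-\sigma(b-x)}$. I do not anticipate a real obstacle: the whole argument is a one-line conjugation once the classical decomposition and the absolute continuity of $e^{\pm\sigma x}u(x)$ are in hand; the main thing to be careful about is matching the exponential factors at the endpoints so that $(x-a)^{-\alpha}e^{-\sigma(x-a)}$ and $(b-x)^{-\alpha}e^{-\sigma(b-x)}$ come out with the correct signs in the exponents.
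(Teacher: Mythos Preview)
Your argument is correct and is the natural one: the tempered operators are by definition multiplicative conjugates of the classical Riemann--Liouville and Caputo operators by $e^{\pm\sigma x}$, so the classical decomposition ${_{a}}D_{x}^{\alpha}v = \frac{v(a)}{\Gamma(1-\alpha)}(x-a)^{-\alpha} + {^{C}_{a}}D_{x}^{\alpha}v$ applied to $v(x)=e^{\sigma x}u(x)\in AC[a,b]$ and then multiplied by $e^{-\sigma x}$ gives (T09) immediately, and the right-sided case is symmetric. Note that the present paper does not actually supply a proof of this statement---it is quoted from \cite{torresT1}---so there is no in-paper argument to compare against; your conjugation approach is precisely the standard derivation one would expect in that reference.
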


\begin{theorem}\label{TFC}
\cite{torresT1} For $\alpha \in (0,1)$, $\sigma >0$ and $u\in AC[a,b]$, we have
\begin{enumerate}
\item
$$
\begin{aligned}
&{^C}\mathbb{D}_{a^+}^{\alpha, \sigma} \mathbb{I}_{a^+}^{\alpha, \sigma} u(x) = u(x),\\
&{^{C}}\mathbb{D}_{b^-}^{\alpha, \sigma} \mathbb{I}_{b^-}^{\alpha, \sigma}u(x) = u(x).
\end{aligned}
$$
\item
$$
\begin{aligned}
&\mathbb{I}_{a^+}^{\alpha, \sigma} {^{C}}\mathbb{D}_{a^+}^{\alpha, \sigma} u(x) = u(x) - e^{-\sigma (x-a)}u(a),\\
& \mathbb{I}_{b^-}^{\alpha, \sigma} {^{C}}\mathbb{D}_{b^-}^{\alpha, \sigma} u(x) = u(x) - e^{-\sigma (x-b)}u(b).
\end{aligned}
$$
\end{enumerate}
\end{theorem}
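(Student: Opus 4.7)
The plan is to reduce both identities to their classical (non-tempered) counterparts via the exponential conjugation that is already built into the definitions. From (\ref{T01}) and (\ref{T05}) one factors
$$\mathbb{I}_{a^+}^{\alpha,\sigma} u(x) = e^{-\sigma x}\,{_a}I_x^{\alpha}\bigl(e^{\sigma\cdot}u\bigr)(x), \qquad {}^{C}\mathbb{D}_{a^+}^{\alpha,\sigma} u(x) = e^{-\sigma x}\,{}^C_aD_x^{\alpha}\bigl(e^{\sigma\cdot}u\bigr)(x),$$
where ${_a}I_x^{\alpha}$ and ${}^C_aD_x^{\alpha}$ denote the classical Riemann--Liouville integral and Caputo derivative. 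Analogous relations, with $e^{\sigma x}$ instead of $e^{-\sigma x}$ and $e^{-\sigma\cdot}$ in place of $e^{\sigma\cdot}$, hold at the right endpoint by (\ref{T02}) and (\ref{T06}). Because $u\in AC[a,b]$ forces $e^{\pm\sigma\cdot}u\in AC[a,b]$, the conjugated function meets the standard hypotheses of the classical fundamental theorem of fractional calculus.

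For item (1), substituting both conjugation formulae and collapsing $e^{\sigma\cdot}\cdot e^{-\sigma\cdot}=1$ in the middle yields
$${}^C\mathbb{D}_{a^+}^{\alpha,\sigma}\mathbb{I}_{a^+}^{\alpha,\sigma}u(x) = e^{-\sigma x}\,{}^C_aD_x^{\alpha}\,{_a}I_x^{\alpha}\bigl(e^{\sigma\cdot}u\bigr)(x).$$
The classical inversion ${}^C_aD_x^{\alpha}\,{_a}I_x^{\alpha}g=g$ (available here because ${_a}I_x^{\alpha}g$ vanishes at $a$, so its Caputo and Riemann--Liouville derivatives agree) then collapses the right-hand side to $e^{-\sigma x}\cdot e^{\sigma x}u(x)=u(x)$. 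For item (2) the same substitution gives
$$\mathbb{I}_{a^+}^{\alpha,\sigma}\,{}^C\mathbb{D}_{a^+}^{\alpha,\sigma}u(x) = e^{-\sigma x}\,{_a}I_x^{\alpha}\,{}^C_aD_x^{\alpha}\bigl(e^{\sigma\cdot}u\bigr)(x),$$
and the classical identity ${_a}I_x^{\alpha}\,{}^C_aD_x^{\alpha}f(x) = f(x) - f(a)$ for $f\in AC[a,b]$, applied with $f(x)=e^{\sigma x}u(x)$, produces $e^{-\sigma x}\bigl[e^{\sigma x}u(x)-e^{\sigma a}u(a)\bigr] = u(x)-e^{-\sigma(x-a)}u(a)$, exactly the claimed formula.

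The two right-sided statements are obtained by the same mechanism, simply replacing the conjugation by the one in (\ref{T04}) and (\ref{T06}), which merely flips the sign of $\sigma$ in the exponential factors. The essential content is therefore encapsulated in the classical fundamental theorem of fractional calculus for ${_a}I_x^{\alpha}$ and ${}^C_aD_x^{\alpha}$, and the only thing to check independently is that the hypothesis $u\in AC[a,b]$ is preserved under multiplication by $e^{\pm\sigma x}$, which is immediate. I do not anticipate any real obstacle: the whole argument is a similarity transformation at the operator level, with no convergence subtleties beyond those already settled in the non-tempered theory.
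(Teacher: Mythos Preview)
The paper does not actually prove Theorem~\ref{TFC}; it is stated as a result quoted from \cite{torresT1}, with no argument supplied. So there is no ``paper's own proof'' to compare against.

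That said, your proposal is correct and is the natural way to establish the result. The conjugation identities
\[
\mathbb{I}_{a^+}^{\alpha,\sigma} = e^{-\sigma x}\,{_a}I_x^{\alpha}\,e^{\sigma x},
\qquad
{}^{C}\mathbb{D}_{a^+}^{\alpha,\sigma} = e^{-\sigma x}\,{}^{C}_{a}D_x^{\alpha}\,e^{\sigma x}
\]
follow immediately from the definitions (\ref{T01}) and (\ref{T05}), and the inner exponentials cancel exactly as you describe. The only point worth making explicit is that in item~(1) you invoke ${}^{C}_{a}D_x^{\alpha}\,{_a}I_x^{\alpha}g = g$; strictly speaking one should note that ${_a}I_x^{\alpha}g \in AC[a,b]$ when $g\in AC[a,b]$ (so that its Caputo derivative is defined), and that $({_a}I_x^{\alpha}g)(a)=0$ so the Caputo and Riemann--Liouville derivatives coincide by (\ref{T09}). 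You allude to the second point but not the first; both are standard, so this is a matter of completeness rather than a gap. The right-sided identities follow by the mirror argument exactly as you indicate.
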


\noindent 
Finally we consider the integration by parts theorem for Riemann-Liouville tempered fractional derivative.
\begin{theorem}\label{IPPD}
\cite{torresT1} Let $\alpha \in (0,1)$, $\sigma >0$ and $u, v\in AC[a,b]$, then
\begin{equation}\label{T15}
\int_{a}^{b}u(x) \mathbb{D}_{b^-}^{\alpha, \sigma} v(x)dx =
\lim_{x\to a^+}u(x) \mathbb{I}_{b^-}^{1-\alpha, \sigma}v(x) -
\lim_{x\to b^-}u(x) \mathbb{I}_{b^-}^{1-\alpha, \sigma}v(x) +
\int_{a}^{b} {^{C}}\mathbb{D}_{a^+}^{\alpha, \sigma}u(x) v(x)dx.
\end{equation}
\end{theorem}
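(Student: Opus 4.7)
The plan is to reduce the integration by parts formula for the tempered fractional derivative to the classical integration by parts together with Theorem~\ref{IPP}. The key algebraic observation is that, from the definitions \eqref{T02} and \eqref{T04},
$$
e^{-\sigma x}\mathbb{I}_{b^-}^{1-\alpha,\sigma}v(x)=\frac{1}{\Gamma(1-\alpha)}\int_{x}^{b}(s-x)^{-\alpha}e^{-\sigma s}v(s)\,ds,
$$
so that $\mathbb{D}_{b^-}^{\alpha,\sigma}v(x)=-e^{\sigma x}\dfrac{d}{dx}\bigl[e^{-\sigma x}\mathbb{I}_{b^-}^{1-\alpha,\sigma}v(x)\bigr]$. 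Using Lemma~\ref{lm01} applied to $v\in AC[a,b]$, the representation \eqref{T08} shows that $\mathbb{I}_{b^-}^{1-\alpha,\sigma}v$, hence $x\mapsto e^{-\sigma x}\mathbb{I}_{b^-}^{1-\alpha,\sigma}v(x)$, is absolutely continuous on $[a,b]$, so that classical integration by parts is legal.

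Next, I would multiply by $u(x)$ and integrate. Using classical integration by parts with $f(x)=u(x)e^{\sigma x}$ and $g(x)=e^{-\sigma x}\mathbb{I}_{b^-}^{1-\alpha,\sigma}v(x)$, the exponentials in $f(x)g(x)=u(x)\mathbb{I}_{b^-}^{1-\alpha,\sigma}v(x)$ cancel and produce exactly the two boundary limits appearing in the theorem. The remaining term is
$$
\int_{a}^{b}\bigl(u(x)e^{\sigma x}\bigr)'\,e^{-\sigma x}\mathbb{I}_{b^-}^{1-\alpha,\sigma}v(x)\,dx,
$$
which I set aside for the next step.

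To rewrite this residual integral, I would apply Theorem~\ref{IPP} with parameter $1-\alpha$ in place of $\alpha$; one checks that the admissibility condition $1/p+1/q\leq 2-\alpha$ is satisfied with $p=1$, $q=\infty$, since $(ue^{\sigma\cdot})'\in L^{1}(a,b)$ because $u\in AC[a,b]$, while $\mathbb{I}_{b^-}^{1-\alpha,\sigma}v\in L^{\infty}(a,b)$ by Theorem~\ref{conti}. This transfers the tempered fractional integral onto the factor $e^{-\sigma x}(u(x)e^{\sigma x})'$, producing
$$
\int_{a}^{b}\mathbb{I}_{a^+}^{1-\alpha,\sigma}\!\bigl[e^{-\sigma\cdot}(u(\cdot)e^{\sigma\cdot})'\bigr](x)\,v(x)\,dx.
$$
A direct calculation from \eqref{T01} yields
$$
\mathbb{I}_{a^+}^{1-\alpha,\sigma}\!\bigl[e^{-\sigma\cdot}(u(\cdot)e^{\sigma\cdot})'\bigr](x)=\frac{e^{-\sigma x}}{\Gamma(1-\alpha)}\int_{a}^{x}(x-s)^{-\alpha}\bigl[e^{\sigma s}u(s)\bigr]'ds,
$$
which, by definition \eqref{T05}, is precisely ${}^{C}\mathbb{D}_{a^+}^{\alpha,\sigma}u(x)$. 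Collecting the boundary contributions from the first integration by parts with this identity gives \eqref{T15}.

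The main delicate point is the legitimacy of the classical integration by parts, which requires showing the absolute continuity of $x\mapsto e^{-\sigma x}\mathbb{I}_{b^-}^{1-\alpha,\sigma}v(x)$; the representation \eqref{T08} together with the estimate \eqref{g01} handles this cleanly. The boundary terms are then expressed as one-sided limits because a priori $\mathbb{I}_{b^-}^{1-\alpha,\sigma}v(a)$ is finite but need not vanish, whereas Theorem~\ref{conti} guarantees that the limit at $b$ equals zero; writing the endpoints as limits accommodates both situations uniformly and keeps the formula valid under the minimal assumption $u,v\in AC[a,b]$.
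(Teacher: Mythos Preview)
The paper does not actually prove this theorem; it is quoted from \cite{torresT1} as a preliminary result, so there is no ``paper's own proof'' to compare against. Your approach---rewrite $\mathbb{D}_{b^-}^{\alpha,\sigma}v=-e^{\sigma x}\frac{d}{dx}\bigl[e^{-\sigma x}\mathbb{I}_{b^-}^{1-\alpha,\sigma}v\bigr]$, integrate by parts classically against $u(x)e^{\sigma x}$, and then move the tempered fractional integral across using \eqref{pp01}---is the natural one and is correct in substance.

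There is one genuine technical slip. You invoke Theorem~\ref{IPP} with $p=1$ and $q=\infty$, but as stated that theorem requires $p,q\in(1,\infty)$; the endpoint case is not covered by the hypothesis you cite. This matters because $w(x)=e^{-\sigma x}(u(x)e^{\sigma x})'$ is only guaranteed to lie in $L^{1}(a,b)$ when $u\in AC[a,b]$, so you cannot simply bump $p$ up. The fix is easy: since $v\in AC[a,b]\subset C[a,b]$, Theorem~\ref{conti} gives $\mathbb{I}_{b^-}^{1-\alpha,\sigma}v\in C[a,b]\subset L^{\infty}$, and the nonnegative kernel $(s-x)^{-\alpha}e^{-\sigma(s-x)}$ is integrable in each variable; hence the iterated integral
\[
\frac{1}{\Gamma(1-\alpha)}\int_{a}^{b}\!\int_{x}^{b}(s-x)^{-\alpha}e^{-\sigma(s-x)}|w(x)|\,|v(s)|\,ds\,dx
\]
is finite, and Fubini's theorem directly yields the identity you need without appealing to Theorem~\ref{IPP}. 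With this adjustment your argument is complete.
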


\section{Tempered fractional space of Sobolev type}

In this section we introduce the tempered fractional space of Sobolev type $\mathbb{H}_{0}^{\alpha, \sigma}(a,b)$ considered in \cite{torresT1}.

For $\alpha \in (0,1)$ and $\sigma >0$ define 
$$
\mathbb{H}_{0}^{\alpha, \sigma}(a,b) = \overline{C_{0}^{\infty}(a,b)}^{\|\cdot\|_{\alpha, \sigma}},
$$
where
\begin{equation}\label{B01}
\|u\|_{\alpha, \sigma} = \left( \int_{a}^{b}|u(x)|^2dx + \int_{a}^{b} |{^C}\mathbb{D}_{a^+}^{\alpha, \sigma}u(x)|^2dx \right)^{1/2},
\end{equation}
which is derived from the inner product
\begin{equation}\label{B-01}
\langle u,v\rangle_{\alpha, \sigma} = \int_{a}^bu\;vdx + \int_{a}^{b} {^C}\mathbb{D}_{a^+}^{\alpha, \sigma}u \; {^C}\mathbb{D}_{a^+}^{\alpha, \sigma} vdx.
\end{equation}
Furthermore $(\mathbb{H}_{0}^{\alpha, \sigma}(a,b), \langle \cdot, \cdot, \rangle_{\alpha, \sigma})$ is a Hilbert space.

Now we consider some basic properties of this function space which are proved in \cite{torresT1}.

\begin{lemma}\label{Blm01}
For any $u\in \mathbb{H}_{0}^{\alpha, \sigma}(a,b)$, we have
$$
\mathbb{I}_{a^+}^{\alpha, \sigma} {^C}\mathbb{D}_{a^+}^{\alpha, \sigma}u(x) = u(x),\;\;\mbox{a.e. in $(a,b)$}.
$$
\end{lemma}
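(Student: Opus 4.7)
My plan is to establish the identity first on the dense subspace $C_0^\infty(a,b)$ using the already-stated Theorem \ref{TFC}, and then extend it to all of $\mathbb{H}_0^{\alpha,\sigma}(a,b)$ by a continuity/density argument, the continuity being furnished by the $L^2$-boundedness of $\mathbb{I}_{a^+}^{\alpha,\sigma}$ coming from Theorem \ref{Lp}.

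More precisely, first I would fix $\varphi \in C_0^\infty(a,b)$. Extending $\varphi$ by zero at the endpoints (so in particular $\varphi(a)=0$) shows $\varphi \in AC[a,b]$, and Theorem \ref{TFC}, part (2), gives
\begin{equation*}
\mathbb{I}_{a^+}^{\alpha,\sigma}\,{}^C\mathbb{D}_{a^+}^{\alpha,\sigma}\varphi(x) \;=\; \varphi(x) - e^{-\sigma(x-a)}\varphi(a) \;=\; \varphi(x),\qquad x\in(a,b).
\end{equation*}
So the identity already holds pointwise (and in particular in $L^2(a,b)$) for test functions.

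Next, given $u\in \mathbb{H}_0^{\alpha,\sigma}(a,b)$, by definition of the space there is a sequence $\{\varphi_n\}\subset C_0^\infty(a,b)$ with $\|\varphi_n-u\|_{\alpha,\sigma}\to 0$. By the very form of the norm \eqref{B01}, this yields simultaneously
\begin{equation*}
\varphi_n \to u \quad \text{in } L^2(a,b), \qquad {}^C\mathbb{D}_{a^+}^{\alpha,\sigma}\varphi_n \to {}^C\mathbb{D}_{a^+}^{\alpha,\sigma}u \quad \text{in } L^2(a,b).
\end{equation*}
Applying $\mathbb{I}_{a^+}^{\alpha,\sigma}$ to the second convergence and using the $L^2\to L^2$ boundedness provided by Theorem \ref{Lp}, I obtain
\begin{equation*}
\mathbb{I}_{a^+}^{\alpha,\sigma}\,{}^C\mathbb{D}_{a^+}^{\alpha,\sigma}\varphi_n \;\longrightarrow\; \mathbb{I}_{a^+}^{\alpha,\sigma}\,{}^C\mathbb{D}_{a^+}^{\alpha,\sigma}u \qquad \text{in } L^2(a,b).
\end{equation*}
But the first step shows that the left-hand side equals $\varphi_n$, which converges in $L^2$ to $u$. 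By uniqueness of limits in $L^2$, $\mathbb{I}_{a^+}^{\alpha,\sigma}\,{}^C\mathbb{D}_{a^+}^{\alpha,\sigma}u = u$ in $L^2(a,b)$, hence almost everywhere in $(a,b)$, which is the claim.

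I do not foresee any serious obstacle here: the entire argument is a density + bounded-operator argument, once one notices that the relevant boundary term $e^{-\sigma(x-a)}\varphi(a)$ vanishes for compactly supported test functions. The only minor point to verify is that ${}^C\mathbb{D}_{a^+}^{\alpha,\sigma}u$ is a well-defined element of $L^2(a,b)$ for $u\in\mathbb{H}_0^{\alpha,\sigma}(a,b)$, but this is immediate from the construction of the space as a completion under the norm \eqref{B01}, which forces the sequence $\{{}^C\mathbb{D}_{a^+}^{\alpha,\sigma}\varphi_n\}$ to be Cauchy, hence convergent, in $L^2(a,b)$.
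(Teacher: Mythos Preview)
Your argument is correct and is exactly the natural density-plus-continuity proof one expects for this kind of statement: verify the identity on $C_0^\infty(a,b)$ via Theorem~\ref{TFC}(2) (where the boundary term drops because $\varphi(a)=0$), then pass to the closure using the $L^2$-boundedness of $\mathbb{I}_{a^+}^{\alpha,\sigma}$ from Theorem~\ref{Lp}. Note, however, that the paper itself does not supply a proof of Lemma~\ref{Blm01}; it merely quotes the result from the reference \cite{torresT1}, so there is no in-paper argument to compare against. Your approach is the standard one and is almost certainly what that reference does as well.
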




\begin{theorem}\label{embedding}
Let $\alpha \in (\frac{1}{2}, 1)$ and $\sigma >0$, then $\mathbb{H}_{0}^{\alpha, \sigma}(a,b)$ is continuously embedded into $C(a,b)$. Moreover 
\begin{equation}\label{conti}
\|u\|_{\infty} \leq \frac{\sqrt{\gamma (2\alpha-1, 2\sigma (b-a))}}{(2\sigma)^{\alpha-\frac{1}{2}}\Gamma (\alpha)}\|u\|.
\end{equation}
\end{theorem}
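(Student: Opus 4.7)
The plan is to reduce everything to an application of Cauchy--Schwarz on the integral representation of $u$ given by Lemma~\ref{Blm01}, then invoke Theorem~\ref{HC} to obtain genuine continuity rather than merely an almost-everywhere $L^\infty$ bound.

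First I would fix $u\in \mathbb{H}_{0}^{\alpha,\sigma}(a,b)$ and use Lemma~\ref{Blm01} to write, for a.e.\ $x\in (a,b)$,
$$
u(x) \;=\; \mathbb{I}_{a^+}^{\alpha,\sigma}\bigl({^C}\mathbb{D}_{a^+}^{\alpha,\sigma}u\bigr)(x) \;=\; \frac{1}{\Gamma(\alpha)}\int_{a}^{x}(x-s)^{\alpha-1}e^{-\sigma(x-s)}\,{^C}\mathbb{D}_{a^+}^{\alpha,\sigma}u(s)\,ds.
$$
Applying the Cauchy--Schwarz inequality in $L^2(a,x)$ gives
$$
|u(x)|\;\le\;\frac{1}{\Gamma(\alpha)}\left(\int_{a}^{x}(x-s)^{2\alpha-2}e^{-2\sigma(x-s)}ds\right)^{1/2}\bigl\|{^C}\mathbb{D}_{a^+}^{\alpha,\sigma}u\bigr\|_{L^2(a,b)}.
$$

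Next I would evaluate the weight integral by the substitution $t=2\sigma(x-s)$, which converts it into an incomplete Gamma function: the integral equals $(2\sigma)^{1-2\alpha}\,\gamma(2\alpha-1,2\sigma(x-a))$, which makes sense precisely because $\alpha>\tfrac12$. Since $\gamma(2\alpha-1,\cdot)$ is monotone increasing, taking the supremum over $x\in[a,b]$ and bounding $\|{^C}\mathbb{D}_{a^+}^{\alpha,\sigma}u\|_{L^2}\le \|u\|_{\alpha,\sigma}$ produces exactly the claimed estimate~\eqref{conti}.

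Finally, to upgrade the essential bound to an honest embedding into $C[a,b]$, I would apply Theorem~\ref{HC} to $f:={^C}\mathbb{D}_{a^+}^{\alpha,\sigma}u\in L^2(a,b)$: it asserts that $\mathbb{I}_{a^+}^{\alpha,\sigma}f$ lies in the H\"older space $H^{\alpha-1/2}(a,b)$ and vanishes at $a^+$. Hence $u$ coincides a.e.\ with a H\"older continuous representative on $[a,b]$, and replacing $u$ by this representative makes the inequality \eqref{conti} pointwise and the inclusion $\mathbb{H}_{0}^{\alpha,\sigma}(a,b)\hookrightarrow C[a,b]$ continuous. I expect no serious obstacle: the computation of the weight integral is the only mildly delicate step, and it only works because $2\alpha-1>0$, which is exactly the hypothesis $\alpha>\tfrac12$; for $\alpha\le\tfrac12$ the integral diverges and the embedding would fail, so the hypothesis is sharply used here.
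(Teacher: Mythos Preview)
The paper does not actually prove this theorem; it is listed in Section~3 among the ``basic properties of this function space which are proved in \cite{torresT1},'' so there is no in-paper argument to compare against. Your proof is correct and is the natural one: representing $u$ via Lemma~\ref{Blm01}, applying Cauchy--Schwarz, and recognising the kernel integral as $(2\sigma)^{1-2\alpha}\gamma(2\alpha-1,2\sigma(x-a))$ yields exactly the constant in~\eqref{conti}, while Theorem~\ref{HC} furnishes the H\"older-continuous representative; this is almost certainly the argument in the cited reference as well.
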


\begin{theorem}\label{compact}
Let $\alpha \in (\frac{1}{2}, 1)$ and $\sigma >0$. Then the embedding
$$
\mathbb{H}_{0}^{\alpha, \sigma} (a,b) \hookrightarrow C\overline{(a,b)}
$$
is compact.
\end{theorem}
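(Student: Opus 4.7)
The plan is to verify the hypotheses of the Arzelà--Ascoli theorem for an arbitrary bounded sequence $(u_n)\subset \mathbb{H}_{0}^{\alpha,\sigma}(a,b)$, namely uniform boundedness and equicontinuity on $\overline{(a,b)}$, and then conclude that such a sequence admits a subsequence converging uniformly on $\overline{(a,b)}$.

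First, uniform boundedness is immediate from the continuous embedding established in Theorem~\ref{embedding}: if $\|u_n\|_{\alpha,\sigma}\leq M$ for all $n$, then
\[
\|u_n\|_\infty \leq \frac{\sqrt{\gamma(2\alpha-1,2\sigma(b-a))}}{(2\sigma)^{\alpha-\frac{1}{2}}\Gamma(\alpha)}\,M.
\]
The key step is equicontinuity. By Lemma~\ref{Blm01}, every $u_n$ admits the representation $u_n = \mathbb{I}_{a^+}^{\alpha,\sigma}\,v_n$ with $v_n := {^C}\mathbb{D}_{a^+}^{\alpha,\sigma}u_n$, and by hypothesis $\|v_n\|_{L^2(a,b)}\leq M$. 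I would then invoke Theorem~\ref{HC}, which asserts that the operator $\mathbb{I}_{a^+}^{\alpha,\sigma}:L^2(a,b)\to H^{\alpha-\frac{1}{2}}(a,b)$ is well defined; tracing its proof (a direct estimate of $|\mathbb{I}_{a^+}^{\alpha,\sigma}v(x_2)-\mathbb{I}_{a^+}^{\alpha,\sigma}v(x_1)|$ by splitting the integral over $[a,x_1]$ and $[x_1,x_2]$ and applying Cauchy--Schwarz to the kernel $(x-s)^{\alpha-1}e^{-\sigma(x-s)}$, which is square integrable when $\alpha>\frac{1}{2}$) one gets an explicit bound of the form
\[
|u_n(x_2)-u_n(x_1)| \leq C_{\alpha,\sigma}\,\|v_n\|_{L^2(a,b)}\,|x_2-x_1|^{\alpha-\frac{1}{2}},
\]
uniform in $n$. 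Combined with the boundary limit $\lim_{x\to a^+}\mathbb{I}_{a^+}^{\alpha,\sigma}v_n(x)=0$ from the same theorem, this yields uniform equicontinuity of $(u_n)$ on $\overline{(a,b)}$.

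With both hypotheses of Arzelà--Ascoli verified, a subsequence of $(u_n)$ converges uniformly on $\overline{(a,b)}$ to some continuous function, which proves that the inclusion $\mathbb{H}_{0}^{\alpha,\sigma}(a,b)\hookrightarrow C(\overline{(a,b)})$ is compact. The principal technical obstacle is the uniform Hölder estimate for $\mathbb{I}_{a^+}^{\alpha,\sigma}v_n$; the condition $\alpha>\frac{1}{2}$ is essential precisely because it makes the kernel $(x-s)^{\alpha-1}$ square integrable, so Cauchy--Schwarz produces a finite Hölder modulus, whereas for $\alpha\leq\frac{1}{2}$ the argument breaks down and only weaker integrability would be available.
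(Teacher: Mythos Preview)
The paper does not actually contain a proof of this theorem: it is listed among the ``basic properties of this function space which are proved in \cite{torresT1}'' and is simply quoted without argument. There is therefore no in-paper proof to compare against.

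Your approach via Arzel\`a--Ascoli is the standard one and is correct. Uniform boundedness is exactly Theorem~\ref{embedding}, and for equicontinuity the representation $u_n=\mathbb{I}_{a^+}^{\alpha,\sigma}v_n$ from Lemma~\ref{Blm01} together with a uniform H\"older bound for $\mathbb{I}_{a^+}^{\alpha,\sigma}$ on $L^2$ does the job. One small point worth making explicit: Theorem~\ref{HC} as stated only asserts that $\mathbb{I}_{a^+}^{\alpha,\sigma}u\in H^{\alpha-\frac12}(a,b)$ for each $u\in L^2$, not that the map is bounded with an explicit operator norm. You handle this by ``tracing its proof'' to extract the constant; alternatively, since $\mathbb{I}_{a^+}^{\alpha,\sigma}:L^2(a,b)\to H^{\alpha-\frac12}(a,b)$ is linear between Banach spaces and everywhere defined, boundedness follows from the closed graph theorem, which would let you avoid reopening the proof of Theorem~\ref{HC}. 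Either way the conclusion stands, and your identification of $\alpha>\tfrac12$ as precisely the threshold making the kernel square integrable is the right diagnosis of where the argument would fail otherwise.
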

\begin{remark}
If $\alpha \in (\frac{1}{2}, 1)$ and $\sigma >0$, then for every $u\in \mathbb{H}_{0}^{\alpha, \sigma}(a,b)$, there exists $(\phi_n)_{n\in \N} \subset C_{0}^{\infty}(a,b)$ such that
$$
\lim_{n\to \infty}\|u-\phi_n\| = 0.
$$
Combining this limit with Theorem \ref{embedding} we arrive to
$$
0\leq |u(a)| = |u(a) - \phi_n(a)|\leq \frac{1}{(2\sigma)^{\alpha-\frac{1}{2}}\Gamma (\alpha)}[\gamma (2\alpha-1, 2\sigma (b-a))]^{1/2}\|u-\phi_n\| \to 0\;\;\mbox{as}\;\;n\to \infty.
$$
Consequently $u(a) = 0$. Similarly we can obtain that $u(b) = 0$.

By other side, Lemma \ref{Blm01} yields that
$$
{^{C}}\mathbb{D}_{a^+}^{\alpha, \sigma}u\in L^2(a,b).
$$
Therefore, $\mathbb{H}_{0}^{\alpha, \sigma}(a,b)$ can be rewritten as
$$
\mathbb{H}_{0}^{\alpha, \sigma}(a,b) = \{u\in L^2(a,b): \;{^{C}}\mathbb{D}_{a^+}^{\alpha, \sigma}u\in L^2(a,b)\;\;\mbox{and}\;\;u(a) = u(b) = 0\}.
$$
\end{remark}

\section{fractional tempered calculus of variations}

In this section, we prove for fractional tempered variational problem a existence theorem,  the fractional tempered Euler-Lagrange equation and the tempered fractional Noether’s theorem.

We start with some definition of basic notions and notations.

\begin{definition}
	Let $\alpha \in \left(\frac{1}{2}, 1\right)$ and $a,b \in \R$, with $a<b $. 	Let $\Omega$  be a connected open subset of $ \R$ and $ \overline{\Omega}$ the closure of $\Omega$, such that $\mathbb{H}_{0}^{\alpha, \sigma}(a,b)\subset \Omega$,
	  then
	\begin{itemize}
		
		\item[(i)] For $\alpha \in (\frac{1}{2}, 1)$ and $\sigma >0$ we define the Lagrangian functional $\mathcal{J}$ on $\mathbb{H}_{0}^{\alpha, \sigma}(a,b)$ as 
		\begin{equation}\label{EL1}
			\begin{aligned}
				\mathcal{J}: \mathbb{H}_{0}^{\alpha, \sigma}(a,b)&\longrightarrow \R\\
				u&\longmapsto \mathcal{J}(u) = \int_{a}^{b} L\left(u(t), {^{C}}\mathbb{D}_{a^+}^{\alpha, \sigma}u(t), t\right)\textnormal{d}t<\infty,
			\end{aligned}
		\end{equation}
		where the Lagrangian $  L :\overline{\Omega}\times\mathbb{R}\times[a,b]\longrightarrow \mathbb{R}$ is of class $\textnormal{C}^1$. The operator ${^{C}}\mathbb{D}_{a^+}^{\alpha, \sigma}$ denotes left-sided  tempered Caputo fractional derivatives of order $\alpha$.
		
		\item[(ii)]  Let $u,v \in  \textnormal{AC}\left([a,b],\R\right)$ such that $u+ hv\in  \overline{\Omega}$ and $\mathcal{J}(u+hv)\in \textnormal{L}^{ 1}\left( a,b \right)$ for $h$ in a neighborhood of $0$. Then,  \emph{the first-order Gâteaux-differential} of $\mathcal{J}$ at $u$ in direction $v$ is defined by
		\begin{equation}\label{EL2}
				v\longmapsto D\mathcal{J}(u)v := \lim_{h\to 0} \frac{\mathcal{J}(u+hv) - \mathcal{J}(u)}{h}
		\end{equation}
		provided that the right-hand side term exists. The function $u$ is called \emph{weak extremal} of $\mathcal{J}$ if $D\mathcal{J}(u)v=0$ for all $v\in \textnormal{AC}\left([a,b],\R\right)$.
		
		\item[(iii)] Function $u_0\in \overline{\Omega}$ is a \emph{minimizer (or global minimizer)} of the functional $\mathcal{J}$
		 if
		$$\mathcal{J}(u_0)\leq \mathcal{J}(u) $$ for all $u\in\overline{\Omega}$.
	\end{itemize}	
\end{definition}

We are concerned with  the following optimisation problem:	
\begin{problem}\label{pb}
	Find $u_0\in \overline{\Omega}$ such that $$\mathcal{J}\left(u_0\right)=\textnormal{min}\{\mathcal{J}(u)\,|\,u\in \overline{\Omega}\}.$$
\end{problem}

The next theorem is a Tonelli type theorem for Problem~\ref{pb}:
\begin{theorem}\label{main1}
Let $L$ be a Lagrangian of class $\textnormal{C}^1$ and $\alpha \in \left(\frac{1}{2}, 1\right)$. If $L$ satisfies the following conditions:
\begin{enumerate}
\item[$(L_1)$] There is $d_1 \in (0,2]$ and $r_1, s_1 \in \textnormal{C}(\R\times [a,b], \R^+)$ such that 
$$
\forall (x,y,t)\in \overline{\Omega}\times \R \times[a,b], \quad |L(x,y,t) - L(x,0,t)| \leq r_1(x,t)|t|^{d_1} + s_1(x,t).
$$
\item[$(L_2)$] There is $d_2\in (0,2]$ and $r_2, s_2\in \textnormal{C}(\R\times [a,b], \R^+)$ such that 
$$
\forall (x,y,t)\in \overline{\Omega}\times \R\times[a,b],\quad \left|\frac{\partial L}{\partial x}(x,y,t) \right| \leq r_2(x,t)|y|^{d_2} + s_2(x,t).
$$
\item[$(L_3)$] There is $d_3\in (0,1]$ and $r_3,s_3\in \textnormal{C}\left(\R\times[a,b], \R^+\right)$ such that 
$$
\forall (x,y,t)\in \overline{\Omega}\times \R\times[a,b],\quad \left|\frac{\partial L}{\partial y}(x,y,t) \right| \leq r_3(x,t)|y|^{d_3} + s_3(x,t).
$$
\item[$(L_4)$] There are $\zeta >0$, $d_4 \in [1,2)$, $c_1\in \textnormal{C}(\R\times[a,b], [\zeta, \infty))$, $c_2,c_3\in \textnormal{C}([a,b], \R)$ such that 
$$
\forall (x,y,t) \in \overline{\Omega}\times \R\times[a,b],\quad L(x,y,t) \geq c_1(x,t)|y|^2 + c_2(t)|x|^{d_4} + c_3(t).
$$
\item[$(L_5)$] 
$$
\forall t\in [a,b],\quad L(\cdot, \cdot, t)\;\;\mbox{is convex}. 
$$
\end{enumerate}
Then Problem~\ref{pb} has at least one solution. In addition, if the Lagrangian $L$ is stricty convex  for any $t \in [a,b]$, 
then this solution is unique.
\end{theorem}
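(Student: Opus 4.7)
The plan is to apply the direct method of the calculus of variations to $\mathcal{J}$ on the Hilbert space $\mathbb{H}_{0}^{\alpha,\sigma}(a,b)$ established in Section~3. The strategy is the standard four-step scheme: first I verify that $\mathcal{J}$ is well-defined and bounded below; then I use $(L_4)$ to prove coercivity, so that a minimizing sequence is bounded; next I extract a weak limit and combine the compact embedding with convexity $(L_5)$ to get weak lower semicontinuity; finally, strict convexity gives uniqueness.

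\emph{Well-definedness and coercivity.} Each $u\in\mathbb{H}_{0}^{\alpha,\sigma}(a,b)$ lies in $C[a,b]$ with $u(a)=u(b)=0$ (Theorem~\ref{embedding} and the subsequent remark), while ${^C}\mathbb{D}_{a^+}^{\alpha,\sigma}u\in L^{2}(a,b)$. Hypothesis $(L_1)$ with $d_1\leq 2$, combined with the continuity of $L(\cdot,0,\cdot)$ on the compact set $u([a,b])\times[a,b]$, makes the integrand in \eqref{EL1} integrable, so $\mathcal{J}(u)\in\R$. Lemma~\ref{Blm01} together with Theorem~\ref{Lp} yields $\|u\|_{L^{2}}\leq K\,\|{^C}\mathbb{D}_{a^+}^{\alpha,\sigma}u\|_{L^{2}}$, so $u\mapsto\|{^C}\mathbb{D}_{a^+}^{\alpha,\sigma}u\|_{L^{2}}$ is a norm equivalent to $\|\cdot\|_{\alpha,\sigma}$, and Theorem~\ref{embedding} gives $\|u\|_{\infty}\leq M\|u\|_{\alpha,\sigma}$. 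Applying $(L_4)$,
$$
\mathcal{J}(u)\geq \zeta\,\|{^C}\mathbb{D}_{a^+}^{\alpha,\sigma}u\|_{L^{2}}^{2}-C_1\|u\|_{\infty}^{d_{4}}-C_{2}\geq \zeta'\|u\|_{\alpha,\sigma}^{2}-C_1 M^{d_{4}}\|u\|_{\alpha,\sigma}^{d_{4}}-C_{2}.
$$
Since $d_{4}<2$, this bound is coercive; in particular $\mathcal{J}$ is bounded below and any minimizing sequence $(u_n)$ is $\|\cdot\|_{\alpha,\sigma}$-bounded.

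\emph{Weak limit and lower semicontinuity.} By reflexivity, a subsequence (not relabeled) satisfies $u_n\rightharpoonup u_0$ in $\mathbb{H}_{0}^{\alpha,\sigma}(a,b)$; Theorem~\ref{compact} then gives $u_n\to u_0$ uniformly on $[a,b]$, and in particular ${^C}\mathbb{D}_{a^+}^{\alpha,\sigma}u_n\rightharpoonup {^C}\mathbb{D}_{a^+}^{\alpha,\sigma}u_0$ in $L^{2}(a,b)$. Because $L$ is $C^{1}$ and convex by $(L_5)$, the subgradient (in fact, tangent) inequality gives the pointwise affine minorant
$$
L\bigl(u_n,{^C}\mathbb{D}_{a^+}^{\alpha,\sigma}u_n,t\bigr)\geq L\bigl(u_0,{^C}\mathbb{D}_{a^+}^{\alpha,\sigma}u_0,t\bigr)+\partial_{x}L[u_0]\,(u_n-u_0)+\partial_{y}L[u_0]\bigl({^C}\mathbb{D}_{a^+}^{\alpha,\sigma}u_n-{^C}\mathbb{D}_{a^+}^{\alpha,\sigma}u_0\bigr).
$$
By $(L_2)$ with $d_{2}\leq 2$ one has $\partial_{x}L[u_0]\in L^{1}(a,b)$, so its integral against $u_n-u_0\to 0$ in $C[a,b]$ vanishes; by $(L_3)$ with $d_{3}\leq 1$ one has $\partial_{y}L[u_0]\in L^{2}(a,b)$, so its pairing with the weakly $L^{2}$-convergent sequence ${^C}\mathbb{D}_{a^+}^{\alpha,\sigma}u_n-{^C}\mathbb{D}_{a^+}^{\alpha,\sigma}u_0$ also vanishes. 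Taking $\liminf_{n\to\infty}$ in the integrated inequality yields $\mathcal{J}(u_0)\leq \liminf \mathcal{J}(u_n)=\inf\mathcal{J}$, so $u_0$ solves Problem~\ref{pb}.

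\emph{Uniqueness and main obstacle.} If $L(\cdot,\cdot,t)$ is strictly convex and $u_0\neq u_1$ were two minimizers, then $(u_0(t),{^C}\mathbb{D}_{a^+}^{\alpha,\sigma}u_0(t))\neq (u_1(t),{^C}\mathbb{D}_{a^+}^{\alpha,\sigma}u_1(t))$ on a set of positive measure, so strict convexity applied at $\tfrac{1}{2}(u_0+u_1)\in\mathbb{H}_{0}^{\alpha,\sigma}(a,b)$ gives $\mathcal{J}\bigl(\tfrac{1}{2}(u_0+u_1)\bigr)<\tfrac{1}{2}\bigl(\mathcal{J}(u_0)+\mathcal{J}(u_1)\bigr)=\inf\mathcal{J}$, a contradiction. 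The most delicate point is the vanishing of the two cross terms in the convexity inequality: the exponents $d_{2}\leq 2$ and $d_{3}\leq 1$ in $(L_2)$ and $(L_3)$ are exactly calibrated so that $\partial_{x}L[u_0]$ pairs with the uniform convergence of $u_n$ in $L^{1}$-duality, while $\partial_{y}L[u_0]$ pairs with the weak $L^{2}$ convergence of the tempered Caputo derivatives.
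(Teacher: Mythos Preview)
Your argument is correct and follows essentially the same route as the paper: coercivity from $(L_4)$ gives a bounded minimizing sequence, reflexivity plus the compact embedding of Theorem~\ref{compact} supply a weak limit $u_0$ with $u_n\to u_0$ in $C[a,b]$, and the convexity inequality from $(L_5)$ yields weak lower semicontinuity once the two cross terms are shown to vanish via $(L_2)$ and $(L_3)$; uniqueness under strict convexity is the standard midpoint contradiction. The only cosmetic difference is that the paper packages the integrability of $\partial_xL[u_0]$ and $\partial_yL[u_0]$ into a separate lemma and writes the convexity step as $\mathcal{J}(u_n)\geq\mathcal{J}(u_0)+D\mathcal{J}(u_0)(u_n-u_0)$, whereas you work directly with the pointwise tangent inequality; the content is identical.
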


\subsection{G\^ateaux differentiability of $\mathcal{J}$}

We start our analysis with the following lemma: 

\begin{lemma}\label{ELlm1}
The following implications hold:
\begin{enumerate}
\item If $L$ satisfies $(L_1)$, then, for any $u\in \mathbb{H}_{0}^{\alpha, \sigma}(a,b)$, $L(u, {^{C}}\mathbb{D}_{a^+}^{\alpha, \sigma}u, t)\in \textnormal{L}^1$ and the functional $\mathcal{J}$ exists in $\R$.
\item If $L$ satisfies $(L_2)$, then, for any $u\in \mathbb{H}_{0}^{\alpha, \sigma}(a,b)$, $\frac{\partial L}{\partial x}(u, {^{C}}\mathbb{D}_{a^+}^{\alpha, \sigma}u, t) \in \textnormal{L}^1$. 
\item If $L$ satisfies $(L_3)$, then, for any $u\in \mathbb{H}_{0}^{\alpha, \sigma}(a,b)$, $\frac{\partial L}{\partial y}(u, {^{C}}\mathbb{D}_{a^+}^{\alpha, \sigma}u, t) \in \textnormal{L}^2$
\end{enumerate}
where $(L_1)-(L_3)$ are the conditions on Theorem~\ref{main1}.
\end{lemma}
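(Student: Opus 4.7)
The plan rests on two simple facts and one observation. First, by Theorem~\ref{embedding}, any $u\in\mathbb{H}_{0}^{\alpha,\sigma}(a,b)$ is continuous on the compact interval $[a,b]$ and hence uniformly bounded, so that each of the continuous compositions $L(u(\cdot),0,\cdot)$, $r_i(u(\cdot),\cdot)$, $s_i(u(\cdot),\cdot)$ is continuous on $[a,b]$ and thus uniformly bounded. Second, by the very definition of $\mathbb{H}_{0}^{\alpha,\sigma}(a,b)$, the fractional derivative ${}^{C}\mathbb{D}_{a^+}^{\alpha,\sigma}u$ belongs to $L^2(a,b)$. The observation is that, on a bounded interval, $L^{p}(a,b)\hookrightarrow L^{q}(a,b)$ whenever $p\ge q$, so each claim reduces to a pointwise growth estimate plus a trivial Hölder-type inclusion.

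For item (1), applying $(L_1)$ with $y={}^{C}\mathbb{D}_{a^+}^{\alpha,\sigma}u(t)$ and rearranging yields
$$\bigl|L(u,{}^{C}\mathbb{D}_{a^+}^{\alpha,\sigma}u,t)\bigr| \le \bigl|L(u,0,t)\bigr| + r_1(u,t)\,\bigl|{}^{C}\mathbb{D}_{a^+}^{\alpha,\sigma}u\bigr|^{d_1} + s_1(u,t).$$
The endpoint terms are bounded by the first paragraph, while $d_1\in(0,2]$ gives $|{}^{C}\mathbb{D}_{a^+}^{\alpha,\sigma}u|^{d_1}\in L^{2/d_1}(a,b)\subset L^{1}(a,b)$. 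Summing, $L(u,{}^{C}\mathbb{D}_{a^+}^{\alpha,\sigma}u,t)\in L^1(a,b)$, and therefore $\mathcal{J}(u)$ is finite and real-valued. Item (2) is handled identically: $(L_2)$ gives
$$\left|\frac{\partial L}{\partial x}(u,{}^{C}\mathbb{D}_{a^+}^{\alpha,\sigma}u,t)\right| \le r_2(u,t)\,\bigl|{}^{C}\mathbb{D}_{a^+}^{\alpha,\sigma}u\bigr|^{d_2} + s_2(u,t),$$
and the same boundedness plus $d_2\in(0,2]$ yield $L^1$-integrability.

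For item (3), I would square the bound $(L_3)$ and use the elementary inequality $(a+b)^2\le 2a^2+2b^2$ to obtain
$$\left|\frac{\partial L}{\partial y}(u,{}^{C}\mathbb{D}_{a^+}^{\alpha,\sigma}u,t)\right|^2 \le 2\,r_3(u,t)^2\bigl|{}^{C}\mathbb{D}_{a^+}^{\alpha,\sigma}u\bigr|^{2d_3} + 2\,s_3(u,t)^2.$$
Since $d_3\in(0,1]$ gives $2d_3\le 2$, we get $|{}^{C}\mathbb{D}_{a^+}^{\alpha,\sigma}u|^{2d_3}\in L^{1/d_3}(a,b)\subset L^{1}(a,b)$; the continuous coefficients being bounded, the right-hand side is integrable, so $\partial L/\partial y(u,{}^{C}\mathbb{D}_{a^+}^{\alpha,\sigma}u,t)\in L^{2}(a,b)$.

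I do not expect any real obstacle in this lemma: everything is a careful bookkeeping combining the embedding $\mathbb{H}_{0}^{\alpha,\sigma}(a,b)\hookrightarrow C[a,b]$ with the built-in $L^{2}$-regularity of the Caputo tempered derivative. The only point worth flagging is that the specific exponent thresholds $d_1,d_2\le 2$ and $d_3\le 1$ are precisely what is needed for the above $L^p$-inclusions to succeed, so the proof simultaneously explains why the ranges in $(L_1)$--$(L_3)$ are chosen as they are.
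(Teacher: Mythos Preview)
Your proposal is correct and follows essentially the same approach as the paper: both use the embedding $\mathbb{H}_{0}^{\alpha,\sigma}(a,b)\hookrightarrow C\overline{(a,b)}$ to make the coefficients $r_i(u,\cdot)$, $s_i(u,\cdot)$ and $L(u,0,\cdot)$ bounded, then combine this with ${}^{C}\mathbb{D}_{a^+}^{\alpha,\sigma}u\in L^2$ and the exponent inclusions $L^{2/d_i}\subset L^1$ (resp.\ $L^2$). The only cosmetic difference is in item (3): the paper observes directly that $|{}^{C}\mathbb{D}_{a^+}^{\alpha,\sigma}u|^{d_3}\in L^{2/d_3}\subset L^2$ since $d_3\le 1$, whereas you square the bound first and land in $L^1$; the two are equivalent.
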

\begin{proof}
Suppose that $L$ satisfies $(L_1)$ and let $u\in \mathbb{H}_{0}^{\alpha, \sigma}(a,b)  \hookrightarrow \textnormal{C}\overline{(a,b)}$. Then 
$$
|{^{C}}\mathbb{D}_{a^+}^{\alpha, \sigma} u|^{d_1} \in \textnormal{L}^{\frac{2}{d_1}}(a,b) \subset \textnormal{L}^1(a,b)
$$
and the functions 
$$
t \to r_1(u(t), t), s_1(u(t), t), |L(u(t), 0, t)| \in \textnormal{C}([a,b], \R^+) \subset \textnormal{L}^\infty(a,b) \subset \textnormal{L}^1(a,b).
$$
Moreover, $(L_1)$ implies for almost all $t\in [a,b]$
\begin{equation}\label{EL3}
|L(u(t), {^{C}}\mathbb{D}_{a^+}^{\alpha, \sigma} u(t), t)| \leq r_1(u(t), t)|{^{C}}\mathbb{D}_{a^+}^{\alpha, \sigma}u(t)|^{d_1} + s_1(u(t),t) + |L(u(t), 0, t)|.
\end{equation}
Hence, $L(u, {^{C}}\mathbb{D}_{a^+}^{\alpha, \sigma}u, t) \in \textnormal{L}^1$ and then $\mathcal{J}(u)$ exists in $\R.$

In the same way we can show (2). Now, assuming that $L$ satisfies $(L_3)$, for any $u\in \mathbb{H}_{0}^{\alpha, \sigma}(a,b)$ we have 
$$
|{^{C}}\mathbb{D}_{a^+}^{\alpha, \sigma} u|^{d_3} \in \textnormal{L}^{\frac{2}{d_3}}(a,b) \subset \textnormal{L}^2(a,b),
$$ 
Hence, as before we can show that (3) holds. 
\end{proof}

In our next result we are going to prove some differentiability properties of $\mathcal{J}$, more precisely we have:
\begin{proposition}\label{ELprop1}
Suppose that $(L_1)-(L_3)$ in Theorem~\ref{main1} hold . Then $\mathcal{J}$ is first-order G\^ateaux differentiable in any $u\in \mathbb{H}_{0}^{\alpha, \sigma}(a,b)$ and for any $u,v\in \mathbb{H}_{0}^{\alpha, \sigma}(a,b)$ we have
\begin{equation}\label{EL4}
D\mathcal{J}(u)v = \int_{a}^{b} \left[\frac{\partial L}{\partial x} \left(u, {^{C}}\mathbb{D}_{a^+}^{\alpha, \sigma}u, t\right) v + \frac{\partial L}{\partial y} \left(u, {^{C}}\mathbb{D}_{a^+}^{\alpha, \sigma}u, t\right) {^{C}}\mathbb{D}_{a^+}^{\alpha, \sigma} v\right] \;\textnormal{d}t
\end{equation} 
\end{proposition}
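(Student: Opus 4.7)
The plan is to compute the difference quotient $h^{-1}\bigl(\mathcal{J}(u+hv)-\mathcal{J}(u)\bigr)$ via the mean value theorem and to pass to the limit $h\to 0$ by Lebesgue's dominated convergence theorem. Because $\mathbb{H}_{0}^{\alpha,\sigma}(a,b)$ is a vector space and ${}^{C}\mathbb{D}_{a^+}^{\alpha,\sigma}$ is linear, $u+hv$ again belongs to $\mathbb{H}_{0}^{\alpha,\sigma}(a,b)$ for every $h\in\R$, so Lemma~\ref{ELlm1}(1) guarantees that $\mathcal{J}(u+hv)$ is finite. For each $t\in[a,b]$ the map $s\mapsto L\bigl(u(t)+sv(t),\,{}^{C}\mathbb{D}_{a^+}^{\alpha,\sigma}u(t)+s\,{}^{C}\mathbb{D}_{a^+}^{\alpha,\sigma}v(t),\,t\bigr)$ is of class $C^{1}$, so the mean value theorem yields a $\theta_h=\theta_h(t)\in(0,1)$ with
\begin{equation*}
\frac{L(u+hv,\,{}^{C}\mathbb{D}_{a^+}^{\alpha,\sigma}(u+hv),\,t)-L(u,\,{}^{C}\mathbb{D}_{a^+}^{\alpha,\sigma}u,\,t)}{h}=\frac{\partial L}{\partial x}(\xi_h,\eta_h,t)\,v+\frac{\partial L}{\partial y}(\xi_h,\eta_h,t)\,{}^{C}\mathbb{D}_{a^+}^{\alpha,\sigma}v,
\end{equation*}
where $\xi_h:=u+\theta_h h v$ and $\eta_h:={}^{C}\mathbb{D}_{a^+}^{\alpha,\sigma}u+\theta_h h\,{}^{C}\mathbb{D}_{a^+}^{\alpha,\sigma}v$.

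The decisive step is to produce an $h$--independent majorant for the above integrand, valid for all $|h|\le 1$. Here Theorem~\ref{embedding} plays the central role: since $\mathbb{H}_{0}^{\alpha,\sigma}(a,b)\hookrightarrow C([a,b])$, both $u$ and $v$ lie in $L^{\infty}(a,b)$, hence $\xi_h(t)$ ranges in a compact subset of $\overline{\Omega}$ uniformly in $h\in[-1,1]$ and $t\in[a,b]$; by continuity, the functions $r_i,s_i$ ($i=2,3$) evaluated at $(\xi_h(t),t)$ admit uniform bounds $R_i,S_i$. Using $(L_2)$ together with the elementary inequality $|\eta_h|^{d_2}\le 2^{d_2}\bigl(|{}^{C}\mathbb{D}_{a^+}^{\alpha,\sigma}u|^{d_2}+|{}^{C}\mathbb{D}_{a^+}^{\alpha,\sigma}v|^{d_2}\bigr)$, one obtains
\begin{equation*}
\left|\frac{\partial L}{\partial x}(\xi_h,\eta_h,t)\,v\right|\le \|v\|_{\infty}\Bigl[2^{d_2}R_2\bigl(|{}^{C}\mathbb{D}_{a^+}^{\alpha,\sigma}u|^{d_2}+|{}^{C}\mathbb{D}_{a^+}^{\alpha,\sigma}v|^{d_2}\bigr)+S_2\Bigr],
\end{equation*}
which lies in $L^{1}(a,b)$ thanks to $d_2\in(0,2]$, since then $|{}^{C}\mathbb{D}_{a^+}^{\alpha,\sigma}w|^{d_2}\in L^{2/d_2}\subset L^{1}$ for $w\in\{u,v\}$. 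An analogous estimate based on $(L_3)$ combined with the Cauchy--Schwarz inequality (applied to the factor ${}^{C}\mathbb{D}_{a^+}^{\alpha,\sigma}v\in L^{2}(a,b)$) bounds $\bigl|\tfrac{\partial L}{\partial y}(\xi_h,\eta_h,t)\,{}^{C}\mathbb{D}_{a^+}^{\alpha,\sigma}v\bigr|$ by an $L^{1}$ function; the exponent $d_3\in(0,1]$ keeps $|{}^{C}\mathbb{D}_{a^+}^{\alpha,\sigma}u|^{d_3}$ and $|{}^{C}\mathbb{D}_{a^+}^{\alpha,\sigma}v|^{d_3}$ inside $L^{2}(a,b)$, which is exactly what is needed for integrability.

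Once the majorant is secured, the pointwise convergence of the integrand as $h\to 0$ to $\tfrac{\partial L}{\partial x}(u,{}^{C}\mathbb{D}_{a^+}^{\alpha,\sigma}u,t)\,v+\tfrac{\partial L}{\partial y}(u,{}^{C}\mathbb{D}_{a^+}^{\alpha,\sigma}u,t)\,{}^{C}\mathbb{D}_{a^+}^{\alpha,\sigma}v$ follows from the continuity of $\partial_x L$ and $\partial_y L$, and Lebesgue's dominated convergence theorem immediately delivers \eqref{EL4}. The main obstacle is precisely the uniform construction of this dominating function: it relies simultaneously on the Sobolev--type embedding into $C([a,b])$ (to tame the $x$--dependence of $r_i,s_i$) and on the sharp growth exponents $d_2\le 2$, $d_3\le 1$ built into $(L_2)$--$(L_3)$ (to keep the $y$--dependent terms in $L^{1}$). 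Everything else is standard.
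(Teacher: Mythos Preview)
Your proposal is correct and follows essentially the same route as the paper. The paper frames the passage to the limit as an application of the Leibniz differentiation-under-the-integral theorem, while you phrase it via the mean value theorem followed by dominated convergence; the uniform $L^{1}$ majorant you construct (using the embedding $\mathbb{H}_{0}^{\alpha,\sigma}\hookrightarrow C$, the bounds $R_i,S_i$, and the $2^{d}$ convexity inequality) is exactly the one the paper builds, so the two arguments coincide in substance.
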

\begin{proof}
Let $u,v\in \mathbb{H}_{0}^{\alpha, \sigma}(a,b) \hookrightarrow \textnormal{C}\overline{(a,b)}$. For any $|h|\leq 1$ and for almost all $t\in [a,b]$ let us define the function 
$$
\psi(t,h) := L\left(u(t) + hv(t), {^{C}}\mathbb{D}_{a^+}^{\alpha, \sigma} u(t) + h {^{C}}\mathbb{D}_{a^+}^{\alpha, \sigma} v(t), t\right). 
$$
Next, we define the following function:
$$
\begin{aligned}
\Psi: [-1,1] &\to \R\\
h&\to \Psi(h) = \int_{a}^{b} L\left(u+hv, {^{C}}\mathbb{D}_{a^+}^{\alpha, \sigma}u + h{^{C}}\mathbb{D}_{a^+}^{\alpha, \sigma} v, t\right)\;\rm dt = \int_{a}^{b} \psi(t, h)\;\textnormal{d}t
\end{aligned}
$$ 
Our aim is to prove that the following limit 
$$
D\mathcal{J}(u) v = \lim_{h\to o} \frac{\mathcal{J}(u+hv) - \mathcal{J}(u)}{h} = \lim_{h\to 0}\frac{\Psi(h) - \Psi(0)}{h} = \Psi'(0)
$$
exists in $\R$. In fact, since $L\in \textnormal{C}^1$, then for almost all $t\in [a,b]$, $\psi$ is differentiable on $[-1,1]$ and for any $h\in [-1,1]$ we have 
\begin{equation}\label{EL5}
\begin{aligned}
\frac{\partial \psi}{\partial h}(t,h) &= \frac{\partial L}{\partial x}\Big(u(t) + hv(t), {^{C}}\mathbb{D}_{a^+}^{\alpha, \sigma}u(t) + h{^{C}}\mathbb{D}_{a^+}^{\alpha, \sigma}v(t), t\Big) v(t) \\
&+\frac{\partial L}{\partial y}\Big(u(t) + hv(t), {^{C}}\mathbb{D}_{a^+}^{\alpha, \sigma}u(t) + h{^{C}}\mathbb{D}_{a^+}^{\alpha, \sigma}v(t), t\Big) {^{C}}\mathbb{D}_{a^+}^{\alpha, \sigma} v(t). 
\end{aligned}
\end{equation}
Hence, by $(L_2)$ and $(L_3)$, we have for any $h\in [-1,1]$ and for almost all $t\in [a,b]$
\begin{equation}\label{EL6}
\begin{aligned}
\left|\frac{\partial \psi}{\partial h}(t,h) \right| &\leq \Big[ r_2(u(t) + hv(t), t) |{^{C}}\mathbb{D}_{a^+}^{\alpha, \sigma}u(t) + h {^{C}}\mathbb{D}_{a^+}^{\alpha, \sigma} v(t)|^{d_2} + s_2(u(t) + hv(t), t) \Big]|v(t)|\\
&+ \Big[ r_3(u(t) + hv(t), t) |{^{C}}\mathbb{D}_{a^+}^{\alpha, \sigma}u(t) + h{^{C}}\mathbb{D}_{a^+}^{\alpha, \sigma}v(t)|^{d_3} + s_3(u(t) + hv(t), t) \Big]|{^{C}}\mathbb{D}_{a^+}^{\alpha, \sigma} v(t)|.
\end{aligned}
\end{equation}
Let
$$
\begin{aligned}
&R_2 : = \max_{(t,h)\in [a,b]\times [-1,1]} r_2(u(t) +  hv(t), t) \quad \mbox{and}\quad S_2:= \max_{(t,h)\in [a,b]\times[-1,1]} s_2(u(t) + hv(t),t)\\
&R_3 : = \max_{(t,h)\in [a,b]\times [-1,1]} r_3(u(t) +  hv(t), t) \quad \mbox{and}\quad S_3:= \max_{(t,h)\in [a,b]\times[-1,1]} s_3(u(t) + hv(t),t).
\end{aligned}
$$
Then, by the analysis found in Lemma \ref{ELlm1} we have 
$$
\begin{aligned}
\left| \frac{\partial \psi}{\partial h}(t,h) \right| &\leq 2^{d_2}R_2\Big( |{^{C}}\mathbb{D}_{a^+}^{\alpha, \sigma}u(t)|^{d_2} + |{^{C}}\mathbb{D}_{a^+}^{\alpha, \sigma} v(t)|^{d_2} \Big)|v(t)| + S_2 |v(t)| \\
&+ 2^{d_3}R_3 \Big( |{^{C}}\mathbb{D}_{a^+}^{\alpha, \sigma}u(t)|^{d_{3}} + |{^{C}}\mathbb{D}_{a^+}^{\alpha, \sigma} v(t)|^{d_3} \Big)|{^{C}}\mathbb{D}_{a^+}^{\alpha, \sigma} v(t)| + S_3|{^{C}}\mathbb{D}_{a^+}^{\alpha, \sigma} v(t)| \in \textnormal{L}^1.
\end{aligned}
$$ 
Consequently, by \cite[Theorem 3 (Leibniz)]{charles}, $\Psi$ is differentiable with 
$$
\Psi'(h) = \int_{a}^{b} \frac{\partial \psi}{\partial h}(t,h)\;\rm dt\;\;\forall h\in [-1,1].
$$
Hence
\begin{multline}\label{EL7}
D\mathcal{J}(u)v = \Psi'(0) = \int_{a}^{b} \frac{\partial \psi}{\partial h}(t,0)\;\textnormal{d}t \\= \int_{a}^{b} \left[\frac{\partial L}{\partial x} (u, {^{C}}\mathbb{D}_{a^+}^{\alpha, \sigma} u, t) v + \frac{\partial L}{\partial y} (u, {^{C}}\mathbb{D}_{a^+}^{\alpha, \sigma} u, t) {^{C}}\mathbb{D}_{a^+}^{\alpha, \sigma} v \right]\;\textnormal{d}t.
\end{multline}
Now,  Lemma \ref{ELlm1} yields that 
$$
\frac{\partial L}{\partial x} (u, {^{C}}\mathbb{D}_{a^+}^{\alpha, \sigma}u, t)\in \textnormal{L}^1 \quad \mbox{and}\quad \frac{\partial L}{\partial y} (u, {^{C}}\mathbb{D}_{a^+}^{\alpha, \sigma}u, t)\in \textnormal{L}^2.
$$
Moreover, since $v\in \textnormal{C}\overline{(a,b)}$ and ${^{C}}\mathbb{D}_{a^+}^{\alpha, \sigma} v \in \textnormal{L}^2(a,b)$, then $D\mathcal{J}(u)v$ exists in $\R$ and by H\"older inequality and (\ref{conti}) we get  
$$
\begin{aligned}
| D\mathcal{J}(u)v| &\leq \|v\|_\infty \left\| \frac{\partial L}{\partial x}(u, {^{C}}\mathbb{D}_{a^+}^{\alpha, \sigma} u, \cdot) \right\|_{\textnormal{L}^1}  + \left\| \frac{\partial L}{\partial y}(u, {^{C}}\mathbb{D}_{a^+}^{\alpha, \sigma}u, \cdot) \right\|_{\textnormal{L}^2}\|{^{C}}\mathbb{D}_{a^+}^{\alpha, \sigma} v\|_{\textnormal{L}^2}\\
&\leq \left( \frac{\sqrt{\gamma (2\alpha-1, 2\sigma (b-a))}}{(2\sigma)^{\alpha-\frac{1}{2}}\Gamma (\alpha)}\left\| \frac{\partial L}{\partial x}(u, {^{C}}\mathbb{D}_{a^+}^{\alpha, \sigma}u, \cdot) \right\|_{\textnormal{L}^1} + \left\| \frac{\partial L}{\partial y}(u, {^{C}}\mathbb{D}_{a^+}^{\alpha, \sigma}u, \cdot) \right\|_{\textnormal{L}^2} \right) \|v\|.
\end{aligned}
$$
Consequently, $D\mathcal{J}(u)$ is linear and continuous from $\mathbb{H}_{0}^{\alpha, \sigma}(a,b)$ to $\R$.
\end{proof}

In the following theorem we prove that any critical point of $\mathcal{J}$ is a weak solution of (\ref{EL}).
\begin{theorem}\label{ELtm1}
 If $u$ is a weak extremal of $\mathcal{J}$, then $u$ is a weak solution of (\ref{EL}) for all $u\in \mathbb{H}_{0}^{\alpha, \sigma}(a,b)$ and for all $v\in\textnormal{C}_{0}^{\infty}(a,b)$. 
\end{theorem}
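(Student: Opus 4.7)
The plan is to combine the explicit expression for $D\mathcal{J}(u)v$ obtained in Proposition~\ref{ELprop1} with one application of the tempered fractional integration by parts formula (Theorem~\ref{IPPD}) to transfer the Caputo-type operator from the test function onto $\partial_y L$, thereby producing the Riemann--Liouville tempered derivative $\mathbb{D}_{b^-}^{\alpha,\sigma}$ appearing in (\ref{EL}).

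First I would fix an arbitrary $v\in C_0^\infty(a,b)$ and observe that $v\in\mathbb{H}_0^{\alpha,\sigma}(a,b)$ since the latter is by construction the $\|\cdot\|_{\alpha,\sigma}$-closure of $C_0^\infty(a,b)$. Because $u$ is a weak extremal, $D\mathcal{J}(u)v=0$, so Proposition~\ref{ELprop1} yields
\begin{equation*}
\int_a^b \frac{\partial L}{\partial x}\bigl(u,{}^C\mathbb{D}_{a^+}^{\alpha,\sigma}u,t\bigr)\, v(t)\, dt +\int_a^b \frac{\partial L}{\partial y}\bigl(u,{}^C\mathbb{D}_{a^+}^{\alpha,\sigma}u,t\bigr)\, {}^C\mathbb{D}_{a^+}^{\alpha,\sigma}v(t)\, dt =0.
\end{equation*}
Next I would set $\phi(t):=\partial_y L\bigl(u(t),{}^C\mathbb{D}_{a^+}^{\alpha,\sigma}u(t),t\bigr)$, which by Lemma~\ref{ELlm1}(3) belongs to $L^2(a,b)$, and invoke Theorem~\ref{IPPD} with $v$ in the test role and $\phi$ in the differentiated role. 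Since $v$ vanishes identically in a neighborhood of both $a$ and $b$, the two boundary limits $\lim_{t\to a^+}v(t)\mathbb{I}_{b^-}^{1-\alpha,\sigma}\phi(t)$ and $\lim_{t\to b^-}v(t)\mathbb{I}_{b^-}^{1-\alpha,\sigma}\phi(t)$ vanish, so the integration by parts reduces to
\begin{equation*}
\int_a^b \phi(t)\, {}^C\mathbb{D}_{a^+}^{\alpha,\sigma}v(t)\, dt =\int_a^b v(t)\, \mathbb{D}_{b^-}^{\alpha,\sigma}\phi(t)\, dt.
\end{equation*}
Substituting this into the identity above produces
\begin{equation*}
\int_a^b \Bigl[\frac{\partial L}{\partial x}\bigl(u,{}^C\mathbb{D}_{a^+}^{\alpha,\sigma}u,t\bigr)+\mathbb{D}_{b^-}^{\alpha,\sigma}\phi(t)\Bigr]\, v(t)\, dt=0 \qquad \forall\, v\in C_0^\infty(a,b),
\end{equation*}
which is precisely the weak formulation of (\ref{EL}).

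The principal obstacle is the legitimate use of Theorem~\ref{IPPD}: as stated, that result requires both arguments to lie in $AC[a,b]$, whereas Lemma~\ref{ELlm1}(3) only gives $\phi\in L^2(a,b)$. I expect to handle this in one of two ways. The cleaner route is to interpret $\mathbb{D}_{b^-}^{\alpha,\sigma}\phi$ in the weak (distributional) sense, defining it through duality against ${}^C\mathbb{D}_{a^+}^{\alpha,\sigma}v$ for $v\in C_0^\infty(a,b)$; then the transferred identity is exactly the defining equation and no regularity on $\phi$ beyond $L^2$ is needed. Alternatively, one approximates $\phi$ in $L^2$ by a sequence $(\phi_n)\subset AC[a,b]$ (e.g.\ via mollification), applies Theorem~\ref{IPPD} to each $\phi_n$, and passes to the limit using the $L^p$-boundedness of $\mathbb{I}_{b^-}^{1-\alpha,\sigma}$ from Theorem~\ref{Lp} together with the smoothness and compact support of $v$ to control the boundary contributions and to push the derivative off $\phi_n$ onto $v$ before taking $n\to\infty$.
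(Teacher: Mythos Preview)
Your starting point coincides with the paper's: both use the G\^ateaux derivative formula from Proposition~\ref{ELprop1} with $v\in C_0^\infty(a,b)$ to obtain the integral identity. From there, however, the paper does \emph{not} invoke Theorem~\ref{IPPD} directly on $\phi=\partial_y L$. Instead it first establishes a tempered du Bois--Reymond lemma (Theorem~\ref{DR}): given $\int_a^b[f\eta+g\,{}^C\mathbb{D}_{a^+}^{\alpha,\sigma}\eta]\,dt=0$ for all test $\eta$, one takes an antiderivative $z$ of $e^{-\sigma t}f$, uses only the \emph{fractional integral} $\mathbb{I}_{b^-}^{1-\alpha,\sigma}$ (bounded on $L^p$) together with classical integration by parts to rewrite everything against $\frac{d}{dt}(e^{\sigma t}\eta)$, and then applies the classical du Bois--Reymond lemma to conclude that $-z+\mathbb{I}_{b^-}^{1-\alpha,\sigma}(e^{-\sigma t}g)$ is constant. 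Differentiating \emph{after} this equality gives $\mathbb{D}_{b^-}^{\alpha,\sigma}g=-f$ a.e.\ and, as a byproduct, shows that $\mathbb{D}_{b^-}^{\alpha,\sigma}\phi$ actually exists as an $L^1$ function. Your route (a) yields only the distributional identity---essentially a restatement of $D\mathcal{J}(u)v=0$---and does not recover the pointwise a.e.\ equation the paper obtains; your route (b), if carried out carefully (pushing the classical derivative hidden in $\mathbb{D}_{b^-}^{\alpha,\sigma}\phi_n$ back onto $v$, passing to the limit via the $L^p$-boundedness of $\mathbb{I}_{b^-}^{1-\alpha,\sigma}$, and then undoing the classical integration by parts), would in effect reproduce the du Bois--Reymond argument. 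The paper's packaging of this step into a standalone lemma is cleaner and sidesteps from the outset the regularity gap you correctly flagged.
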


In order to prove the above theorem, we first need
to show the  du Bois–Reymond lemma for Problem~\ref{pb}.

\begin{theorem}(Generalized du Bois--Reymond lemma)\label{DR}
Set $f,g \in \textnormal{L}^{ 1}( [a,b])$. If
\begin{equation}\label{ldM}
	\int_{a}^{b}\left[f(t)\eta(t) +g(t) {^{C}}\mathbb{D}_{a^+}^{\alpha, \sigma} \eta(t)\right]\,\textnormal{d}t=0	
\end{equation}
for any $\eta \in \textnormal{L}^1(a,b))$ with $\eta(a)=\eta(b)=0$. Then
\begin{equation}\label{reprM}
	\mathbb{D}_{b^-}^{\alpha, \sigma}g(t)=-f(t),\,\, t\in [a,b]\,\, \textnormal{almost everywhere}.
\end{equation}
\end{theorem}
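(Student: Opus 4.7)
The strategy is to move the Caputo operator off the test function $\eta$ by fractional integration by parts, convert the hypothesis into an $L^2$--orthogonality statement, identify the (one--dimensional) orthogonal complement explicitly, and then apply $\mathbb{D}_{b^-}^{\alpha,\sigma}$ to close the argument.

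First, I would specialize the test functions to $\eta\in \mathbb{H}_0^{\alpha,\sigma}(a,b)$, which is legitimate because such $\eta$ automatically satisfy $\eta(a)=\eta(b)=0$ (by the Remark following Theorem~\ref{compact}). Setting $\phi:={^C}\mathbb{D}_{a^+}^{\alpha,\sigma}\eta\in L^2(a,b)$, Lemma~\ref{Blm01} yields $\eta=\mathbb{I}_{a^+}^{\alpha,\sigma}\phi$. A Fubini argument (equivalently, Theorem~\ref{IPP} together with the mapping property of Theorem~\ref{Lp} to make sense of the kernels for $f\in L^1$) gives
$$
\int_a^b f(t)\,\eta(t)\,\textnormal{d}t \;=\; \int_a^b f(t)\,\mathbb{I}_{a^+}^{\alpha,\sigma}\phi(t)\,\textnormal{d}t \;=\; \int_a^b \mathbb{I}_{b^-}^{\alpha,\sigma}f(t)\cdot \phi(t)\,\textnormal{d}t.
$$
Substituting into \eqref{ldM} rewrites the hypothesis as
$$
\int_a^b \bigl[\mathbb{I}_{b^-}^{\alpha,\sigma}f(t)+g(t)\bigr]\,{^C}\mathbb{D}_{a^+}^{\alpha,\sigma}\eta(t)\,\textnormal{d}t = 0\qquad\forall\,\eta\in\mathbb{H}_0^{\alpha,\sigma}(a,b).
$$

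Next, I would identify the range of the parameterization $\eta\mapsto{^C}\mathbb{D}_{a^+}^{\alpha,\sigma}\eta$. Since $\eta(a)=0$ is automatic (Theorem~\ref{HC}) and the only nontrivial constraint coming from $\eta\in\mathbb{H}_0^{\alpha,\sigma}$ is $\eta(b)=(\mathbb{I}_{a^+}^{\alpha,\sigma}\phi)(b)=0$, this range is the closed hyperplane
$$
\mathcal{K} \;=\; \Bigl\{\phi\in L^2(a,b)\;:\; \int_a^b K_b(s)\phi(s)\,\textnormal{d}s=0\Bigr\},\qquad K_b(s):=\tfrac{1}{\Gamma(\alpha)}(b-s)^{\alpha-1}e^{-\sigma(b-s)}.
$$
For $\alpha>\tfrac12$ one has $K_b\in L^2(a,b)$, so $\mathcal{K}^\perp=\mathrm{span}\{K_b\}$; orthogonality of $\mathbb{I}_{b^-}^{\alpha,\sigma}f+g$ to $\mathcal{K}$ therefore forces
$$
\mathbb{I}_{b^-}^{\alpha,\sigma}f(t)+g(t) \;=\; \lambda\,K_b(t)\qquad\text{a.e.\ on }[a,b],
$$
for some constant $\lambda\in\R$.

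Finally, I would apply $\mathbb{D}_{b^-}^{\alpha,\sigma}$ to both sides. A direct calculation using the substitution $s=x+(b-x)\tau$ inside the defining integral \eqref{T04} reduces the relevant kernel to a Beta integral $B(1-\alpha,\alpha)$, which is constant in $x$; differentiation then yields $\mathbb{D}_{b^-}^{\alpha,\sigma}K_b\equiv 0$. Combined with the standard right--inverse identity $\mathbb{D}_{b^-}^{\alpha,\sigma}\mathbb{I}_{b^-}^{\alpha,\sigma}f=f$ (obtained from the classical Riemann--Liouville inversion by conjugation with $e^{\pm\sigma x}$, cf.\ \eqref{T04}), this delivers $\mathbb{D}_{b^-}^{\alpha,\sigma}g=-f$ almost everywhere, as claimed.

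\textbf{Expected obstacle.} The two delicate points are (i) verifying that $\eta\mapsto{^C}\mathbb{D}_{a^+}^{\alpha,\sigma}\eta$ is genuinely \emph{onto} the hyperplane $\mathcal{K}$ (not merely into it), so that the orthogonality really pins $\mathbb{I}_{b^-}^{\alpha,\sigma}f+g$ to $\mathrm{span}\{K_b\}$, and (ii) justifying $\mathbb{D}_{b^-}^{\alpha,\sigma}\mathbb{I}_{b^-}^{\alpha,\sigma}f=f$ for $f$ only in $L^1$ (Theorem~\ref{TFC} is stated for $AC[a,b]$, so one needs either an approximation argument or a direct distributional computation through the tempered conjugation). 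A convenient alternative is to test against $\phi\in C_c^\infty(a,b)$ satisfying $\int_a^b K_b\phi=0$, for which $\eta=\mathbb{I}_{a^+}^{\alpha,\sigma}\phi\in\mathbb{H}_0^{\alpha,\sigma}$ by construction; such $\phi$'s form a dense hyperplane in $L^2$, recovering the same conclusion without density gymnastics.
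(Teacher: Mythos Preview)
Your argument is correct and takes a genuinely different route from the paper's. The paper proceeds by \emph{untempering}: it writes $f\eta=(e^{-\sigma t}f)(e^{\sigma t}\eta)$, introduces an antiderivative $z$ of $e^{-\sigma t}f$, and uses classical integration by parts together with the identity ${^{C}}\mathbb{D}_{a^+}^{\alpha,\sigma}\eta=e^{-\sigma t}\,{^{C}_{a}}D_t^{\alpha}(e^{\sigma t}\eta)$ to rewrite the hypothesis as
\[
\int_a^b\Bigl[-z(t)+\mathbb{I}_{b^-}^{1-\alpha,\sigma}\bigl(e^{-\sigma t}g\bigr)(t)\Bigr]\,\frac{d}{dt}\bigl(e^{\sigma t}\eta(t)\bigr)\,dt=0
\]
for all $\eta\in C_0^\infty(a,b)$, and then invokes the \emph{classical} du Bois--Reymond lemma to conclude that the bracket is constant; differentiating recovers \eqref{reprM}. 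You instead stay in the tempered setting, push $\mathbb{I}_{a^+}^{\alpha,\sigma}$ across by Fubini, and read the hypothesis as orthogonality of $\mathbb{I}_{b^-}^{\alpha,\sigma}f+g$ to a codimension-one family of test functions, forcing it to equal a multiple of the explicit kernel $K_b$.

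The paper's reduction is shorter in that it outsources the key step to the classical one-variable lemma and never needs to identify $K_b$ or check $\mathbb{D}_{b^-}^{\alpha,\sigma}K_b=0$. Your approach is more self-contained and makes the role of the boundary condition $\eta(b)=0$ transparent (it is precisely the linear constraint cutting out $\mathcal{K}$). Two points worth tightening: (i) Theorem~\ref{IPP} as stated requires $p,q>1$, so the direct Fubini justification you mention---which does go through because $(b-\cdot)^{\alpha-1}\in L^2$ when $\alpha>\tfrac12$---is the correct route, not Theorem~\ref{IPP} itself; (ii) the conclusion $\mathbb{I}_{b^-}^{\alpha,\sigma}f+g\in\mathrm{span}\{K_b\}$ should be reached via the $C_c^\infty$ variant you outline at the end (fix $\phi_0\in C_c^\infty$ with $\int K_b\phi_0\neq 0$ and decompose an arbitrary $\psi\in C_c^\infty$), rather than by invoking $L^2$-orthogonal complements, since a priori $\mathbb{I}_{b^-}^{\alpha,\sigma}f+g$ lies only in $L^1$.
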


\begin{proof}
Let's proceed by density. For any $\eta \in\mathcal{C}^\infty_0((a,b))\subset \textnormal{L}^1(a,b)$, we have from \eqref{ldM} that,
\begin{equation}\label{RB1} 
\begin{array}{lllll}
	0&=&\dis\int_{a}^{b}\left[f(t)\eta(t) +g(t)  {^{C}}\mathbb{D}_{a^+}^{\alpha, \sigma}\eta(t)\right]\,\textnormal{d}t\\
	&=&\dis	\int_{a}^{b}\left[e^{-\sigma t}f(t)e^{\sigma t}\eta(t) +g(t)  {^{C}}\mathbb{D}_{a^+}^{\alpha, \sigma}\eta(t)\right]\,\textnormal{d}t\,.
\end{array}\end{equation}
Since $e^{-\sigma t}f\in \textnormal{L}^1(a,b)$, let $z\in \textnormal{AC}([a,b])$ be the representative of $e^{-\sigma t}f$ i.e.,
$$z(t)=\textnormal{cst}+\int_a^t e^{-\sigma \tau}f(t)\,\textnormal{d}t.$$
So, for all $ \eta\in \textnormal{C}^1_0[a,b]$, using the classical integration by parts formula, Theorem~\ref{IPPD} and \eqref{T04}, one obtains
\begin{equation*}
 \begin{array}{lllll}
\eqref{RB1}\Leftrightarrow	0&=&\dis	\int_{a}^{b}\left[-z(t) \dfrac{\textnormal{d}}{\textnormal{d}t} (e^{\sigma t}\eta(t)) +\mathbb I^{1-\alpha,\sigma}_{b^-}\left(e^{-\sigma t}g(t)\right)\dfrac{\textnormal{d}}{\textnormal{d}t}(e^{\sigma t} \eta(t))\right]\,\textnormal{d}t\\
		&=&\dis	\int_{a}^{b}\left[-z(t)  +\mathbb I^{1-\alpha,\sigma}_{b^-}\left(e^{-\sigma t}g(t)\right)\right]\dfrac{\textnormal{d}}{\textnormal{d}t}(e^{\sigma t} \eta(t))\,\textnormal{d}t\,.
\end{array}
\end{equation*}
By the classical du Bois--Reymond lemma \cite[Lemma 1.8]{GB}, we have
$$-z(t)  +\mathbb I^{1-\alpha,\sigma}_{b^-}\left(e^{-\sigma t}g(t)\right)=c\in \R\,,$$
 which implies that 
 $$-e^{\sigma t}\dfrac{\textnormal{d}}{\textnormal{d}t}\left[-z(t)  +\mathbb I^{1-\alpha,\sigma}_{b^-}\left(e^{-\sigma t}g(t)\right)\right]=0 \Leftrightarrow \eqref{reprM}$$ as asserted.
\end{proof}

\begin{proof}(of Theorem~\ref{ELlm1})
Let $u$ be a weak extremal of $\mathcal{J}$. Then, by density for any $v\in \textnormal{C}_{0}^{\infty}(a,b)$, we obtain
$$
D\mathcal{J}(u)v = \int_{a}^{b} \left[ \frac{\partial L}{\partial x}\left(u, {^{C}}\mathbb{D}_{a^+}^{\alpha, \sigma}u, t\right) v + \frac{\partial L}{\partial y}\left(u, {^{C}}\mathbb{D}_{a^+}^{\alpha, \sigma}u, t\right) {^{C}}\mathbb{D}_{a^+}^{\alpha, \sigma}v \right]\,\textnormal{d}t = 0,
$$
and using  Theorem \ref{DR}, we get 
$$
\frac{\partial L}{\partial x}\left(u, {^{C}}\mathbb{D}_{a^+}^{\alpha, \sigma}u, t\right) + \mathbb{D}_{b^-}^{\alpha, \sigma}\left(\frac{\partial L}{\partial y}\left(u, {^{C}}\mathbb{D}_{a^+}^{\alpha, \sigma}u, t\right) \right) = 0\quad \mbox{a.e. all  $t\in[a,b]$},
$$
and then $u\in \mathbb{H}_{0}^{\alpha, \sigma}(a,b) \subset \textnormal{C}\overline{(a,b)}$ satisfies (\ref{EL}) a.e. on $[a,b]$. The proof is complete. 
\end{proof}

\subsection{Existence of a  minimizer of Problem~\ref{pb}}

Next Lemma is useful to prove our Theorem~\ref{main1}.

\begin{lemma}\label{GMlm1}
Suppose that $(L_4)$ in Theorem~\ref{main1} holds. Then $\mathcal{J}$ is coercive, that is 
\begin{equation}\label{coer}
\lim\limits_{\substack{\Vert u \Vert\to \infty \\ u \in \overline{\Omega} }} {\cal J}(u) = +\infty\,.
\end{equation}
\end{lemma}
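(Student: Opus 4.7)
The plan is to exploit the pointwise lower bound in $(L_4)$ to control $\mathcal{J}(u)$ from below by a quadratic form in $\|{^{C}}\mathbb{D}_{a^+}^{\alpha,\sigma}u\|_{L^2}$ minus sub-quadratic terms, and then to convert the dominant term into the full Hilbert norm $\|\cdot\|_{\alpha,\sigma}$ via a Poincar\'e-type inequality for $\mathbb{H}_{0}^{\alpha,\sigma}(a,b)$.

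First I would integrate $(L_4)$ over $[a,b]$ and use $c_1\geq\zeta$ to obtain
$$
\mathcal{J}(u) \geq \zeta\,\|{^{C}}\mathbb{D}_{a^+}^{\alpha,\sigma}u\|_{L^2}^2 + \int_a^b c_2(t)|u(t)|^{d_4}\,dt + \int_a^b c_3(t)\,dt.
$$
Since $c_2$ and $c_3$ are continuous on the compact interval $[a,b]$, they are bounded in absolute value by constants $M_2, M_3$, so the last integral contributes at worst $-M_3(b-a)$. For the middle term I would apply H\"older's inequality with exponents $2/d_4$ and $2/(2-d_4)$ (which is legitimate since $d_4\in[1,2)$) to get $\int_a^b |u|^{d_4}\,dt \leq (b-a)^{1-d_4/2}\|u\|_{L^2}^{d_4}$, hence
$$
\int_a^b c_2(t)|u(t)|^{d_4}\,dt \geq -M_2(b-a)^{1-d_4/2}\|u\|_{L^2}^{d_4}.
$$

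The key step is the Poincar\'e-type inequality. By Lemma~\ref{Blm01} one has $u = \mathbb{I}_{a^+}^{\alpha,\sigma}\,{^{C}}\mathbb{D}_{a^+}^{\alpha,\sigma}u$ almost everywhere, so Theorem~\ref{Lp} applied with $p=2$ gives
$$
\|u\|_{L^2} \leq K_0\,\|{^{C}}\mathbb{D}_{a^+}^{\alpha,\sigma}u\|_{L^2},\qquad K_0 := \frac{\gamma(\alpha,\sigma(b-a))}{\sigma^\alpha\,\Gamma(\alpha)}.
$$
Consequently $\|u\|_{\alpha,\sigma}^2 \leq (1+K_0^2)\,\|{^{C}}\mathbb{D}_{a^+}^{\alpha,\sigma}u\|_{L^2}^2$, or equivalently $\|{^{C}}\mathbb{D}_{a^+}^{\alpha,\sigma}u\|_{L^2}^2 \geq (1+K_0^2)^{-1}\|u\|_{\alpha,\sigma}^2$, and trivially $\|u\|_{L^2}\leq\|u\|_{\alpha,\sigma}$.

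Combining the three estimates yields
$$
\mathcal{J}(u) \geq \frac{\zeta}{1+K_0^2}\,\|u\|_{\alpha,\sigma}^2 - M_2(b-a)^{1-d_4/2}\|u\|_{\alpha,\sigma}^{d_4} - M_3(b-a),
$$
and since $d_4<2$ the quadratic term dominates as $\|u\|_{\alpha,\sigma}\to\infty$, which proves \eqref{coer}. The main (and essentially only) nontrivial ingredient is the Poincar\'e-type inequality extracted from Lemma~\ref{Blm01} and Theorem~\ref{Lp}: without it, $(L_4)$ alone would control the seminorm $\|{^{C}}\mathbb{D}_{a^+}^{\alpha,\sigma}u\|_{L^2}$ but not the full Hilbert norm appearing in \eqref{coer}; everything else is a routine interplay between H\"older's inequality and the strict bound $d_4<2$.
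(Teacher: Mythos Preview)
Your proposal is correct and follows essentially the same route as the paper's proof: integrate $(L_4)$, use H\"older to control $\int_a^b |u|^{d_4}\,dt$, and invoke Lemma~\ref{Blm01} together with Theorem~\ref{Lp} as a Poincar\'e-type inequality. The only cosmetic difference is that the paper tacitly identifies $\|u\|$ with the seminorm $\|{^{C}}\mathbb{D}_{a^+}^{\alpha,\sigma}u\|_{L^2}$ (which is equivalent to $\|\cdot\|_{\alpha,\sigma}$ precisely by that Poincar\'e estimate), whereas you carry the conversion constant $(1+K_0^2)^{-1}$ explicitly.
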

\begin{proof}
Let $u\in \mathbb{H}_{0}^{\alpha, \sigma}(a,b)\subset \overline{\Omega}$, then $(L_4)$ yields that 
$$
\mathcal{J}(u) = \int_{a}^{b} L(u, {^{C}}\mathbb{D}_{a^+}^{\alpha, \sigma}u, t)\,\textnormal{d}t \geq \int_{a}^{b}\Big[ c_1(u,t)|{^{C}}\mathbb{D}_{a^+}^{\alpha, \sigma}u|^2 + c_2(t)|u|^{d_4} + c_3(t)\Big]\;\textnormal{d}t
$$
Combining H\"older inequality, Lemma \ref{Blm01} and Theorem \ref{Lp} we have 
$$
\begin{aligned}
\|u\|_{\textnormal{L}^{d_4}}^{d_4} &\leq (b-a)^{1-\frac{d_4}{2}}\|\mathbb{I}_{a^+}^{\alpha, \sigma}{^{C}}\mathbb{D}_{a^+}^{\alpha, \sigma}u\|_{\textnormal{L}^2}^{d_4}\\
&\leq (b-a)^{1-\frac{d_4}{2}} \left(\frac{\gamma (\alpha, \sigma(b-a))}{\sigma^\alpha \Gamma (\alpha)} \right)^{d_4}\|u\|^{d_4}.
\end{aligned}
$$
Consequently 
$$
\begin{aligned}
\mathcal{J}(u) &\geq \zeta \|{^{C}}\mathbb{D}_{a^+}^{\alpha, \sigma}u\|_{\textnormal{L}^2}^{2} - \|c_2\|_\infty\|u\|_{\textnormal{L}^{d_4}}^{d_4} - (b-a)\|c_3\|_\infty\\
&\geq \zeta \|u\|^2 - \|c_2\|_\infty (b-a)^{1-\frac{d_4}{2}}\left(\frac{\gamma(\alpha, \sigma (b-a))}{\sigma^\alpha \Gamma (\alpha)} \right)^{d_4}\|u\|^{d_4} - (b-a)\|c_3\|_\infty.
\end{aligned}
$$
Since $d_4 < 2$, the proof is complete.
\end{proof}

\begin{remark}\label{inf}	
\vline
	\begin{enumerate}	
		\item[(i)] The coercivity condition $\eqref{coer}$ ensures that
		 infimum $\textnormal{Inf}\{{\cal J}(u): u\in  \overline{\Omega}\}$ is finite.\\
		\item[(ii)] Obviously, the degenerate case where ${\cal J}(u)\equiv +\infty, \, \forall u\in  \overline{\Omega}$, is to rule out in the {\rm Theorem \ref{main1}}.
	\end{enumerate}
\end{remark}

Next, we state a more practical result to ensure the coerciveness of $\cal J$.

\begin{lemma}\label{coerc}
	Let  $L: \overline{\Omega} \times\R\times [a,b]\longrightarrow \R$ be a continuous function. If $L$ satisfies the conditions
	\begin{equation}\label{coerci}
		\alpha\vert y\vert\leq L(x,y,t)\leq\beta(\vert x\vert+\vert y \vert)\,, 
	\end{equation}with $\alpha,\beta>0\,,$ 
	then $\cal J$ is coercive over  $\mathbb{H}_{0}^{\alpha, \sigma}(a,b) \,.$
\end{lemma}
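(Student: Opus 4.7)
\medskip

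The natural starting point is the hypothesised pointwise lower bound. Substituting $x=u(t)$ and $y={}^{C}\mathbb{D}_{a^+}^{\alpha,\sigma}u(t)$ into $\alpha|y|\le L(x,y,t)$ and integrating over $[a,b]$ yields immediately
$$
\mathcal{J}(u)\;\ge\;\alpha\int_{a}^{b}\bigl|{}^{C}\mathbb{D}_{a^+}^{\alpha,\sigma}u(t)\bigr|\,\textnormal{d}t\;=\;\alpha\,\bigl\|{}^{C}\mathbb{D}_{a^+}^{\alpha,\sigma}u\bigr\|_{\textnormal{L}^{1}(a,b)}.
$$
This linear bound is the engine of the coercivity; note that the upper bound $L(x,y,t)\le\beta(|x|+|y|)$ is only needed to ensure that $\mathcal{J}(u)$ is finite on $\mathbb{H}_{0}^{\alpha,\sigma}(a,b)$ (guaranteed by arguments analogous to Lemma~\ref{ELlm1}).

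Next, I would translate this $\textnormal{L}^{1}$-type estimate of the Caputo derivative into control of the function itself. By Lemma~\ref{Blm01}, every $u\in\mathbb{H}_{0}^{\alpha,\sigma}(a,b)$ admits the representation $u=\mathbb{I}_{a^+}^{\alpha,\sigma}\,{}^{C}\mathbb{D}_{a^+}^{\alpha,\sigma}u$ almost everywhere on $(a,b)$. Applying Theorem~\ref{Lp} with $p=1$ gives
$$
\|u\|_{\textnormal{L}^{1}(a,b)}\;\le\;\frac{\gamma(\alpha,\sigma(b-a))}{\sigma^{\alpha}\Gamma(\alpha)}\;\bigl\|{}^{C}\mathbb{D}_{a^+}^{\alpha,\sigma}u\bigr\|_{\textnormal{L}^{1}(a,b)},
$$
so that the combined quantity $\|u\|_{\textnormal{L}^{1}}+\|{}^{C}\mathbb{D}_{a^+}^{\alpha,\sigma}u\|_{\textnormal{L}^{1}}$, which is the natural norm of the $\textnormal{W}^{\alpha,1}$-type tempered Sobolev space, is dominated by a constant multiple of $\mathcal{J}(u)$. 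In short, $\mathcal{J}$ is coercive with respect to this weaker norm.

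The main obstacle is then passing from this $\textnormal{L}^{1}$-type coercivity to coercivity with respect to the Hilbert norm $\|u\|_{\alpha,\sigma}^{2}=\|u\|_{\textnormal{L}^{2}}^{2}+\|{}^{C}\mathbb{D}_{a^+}^{\alpha,\sigma}u\|_{\textnormal{L}^{2}}^{2}$, since H\"older only gives $\|f\|_{\textnormal{L}^{1}}\le(b-a)^{1/2}\|f\|_{\textnormal{L}^{2}}$, the wrong direction. My plan is to argue by contradiction: suppose $(u_{n})\subset\mathbb{H}_{0}^{\alpha,\sigma}(a,b)$ satisfies $\|u_{n}\|_{\alpha,\sigma}\to\infty$ while $\mathcal{J}(u_{n})$ remains bounded. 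Set $v_{n}:=u_{n}/\|u_{n}\|_{\alpha,\sigma}$; then by Theorem~\ref{compact} a subsequence $v_{n_{k}}$ converges uniformly on $[a,b]$ to some $v_{\infty}\in C[\overline{a,b}]$. The plan is then to combine this uniform convergence with the rescaling of the lower bound $\mathcal{J}(u_{n})\ge\alpha\|{}^{C}\mathbb{D}_{a^+}^{\alpha,\sigma}u_{n}\|_{\textnormal{L}^{1}}$ and the identity $v_{n}=\mathbb{I}_{a^+}^{\alpha,\sigma}\,{}^{C}\mathbb{D}_{a^+}^{\alpha,\sigma}v_{n}$ to reach a contradiction; weak lower semicontinuity of the $\textnormal{L}^{2}$-norm of the Caputo derivative, together with the structure imposed by both bounds in \eqref{coerci}, should yield that $\|v_{\infty}\|_{\alpha,\sigma}\le 1$ while forcing $\|v_{n_{k}}\|_{\alpha,\sigma}\to 0$, contradicting $\|v_{n}\|_{\alpha,\sigma}\equiv 1$. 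The genuinely delicate point is this last contradiction step, which is where the interplay between the $\textnormal{L}^{1}$-valued estimate and the $\textnormal{L}^{2}$-valued Hilbert norm on $\mathbb{H}_{0}^{\alpha,\sigma}(a,b)$ must be carefully exploited.
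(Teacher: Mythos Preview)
Your approach diverges from the paper's. The paper's proof of Lemma~\ref{coerc} is a single line: it invokes the fractional Poincar\'e inequality (Theorem~\ref{PCAI}), which gives $\|u\|_{L^{2}}\le C_X\|{}^{C}\mathbb{D}_{a^+}^{\alpha,\sigma}u\|_{L^{2}}$ and hence makes the full norm $\|\cdot\|_{\alpha,\sigma}$ equivalent to $\|{}^{C}\mathbb{D}_{a^+}^{\alpha,\sigma}\cdot\|_{L^{2}}$ alone. You never invoke Poincar\'e; instead you extract the $L^{1}$ lower bound $\mathcal{J}(u)\ge\alpha\|{}^{C}\mathbb{D}_{a^+}^{\alpha,\sigma}u\|_{L^{1}}$ and then attempt a compactness--contradiction argument to upgrade this to coercivity in the Hilbert norm.

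The genuine gap is in your final step, which you yourself flag as ``delicate'' but do not resolve---and it does not close as written. Following your outline: from $\mathcal{J}(u_{n})$ bounded one gets $\|{}^{C}\mathbb{D}_{a^+}^{\alpha,\sigma}u_{n}\|_{L^{1}}$ bounded, hence $\|{}^{C}\mathbb{D}_{a^+}^{\alpha,\sigma}v_{n}\|_{L^{1}}\to 0$ and (via Theorem~\ref{Lp} with $p=1$) $\|v_{n}\|_{L^{1}}\to 0$; combined with the uniform convergence $v_{n_{k}}\to v_{\infty}$ this forces $v_{\infty}=0$, so $\|v_{n_{k}}\|_{L^{2}}\to 0$ and therefore $\|{}^{C}\mathbb{D}_{a^+}^{\alpha,\sigma}v_{n_{k}}\|_{L^{2}}\to 1$. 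But this is \emph{not} a contradiction: a sequence in $L^{2}(a,b)$ can have $L^{2}$-norm near $1$ while its $L^{1}$-norm tends to $0$ (simple concentration, e.g.\ $n\,\chi_{[a,a+1/n^{2}]}$). Nothing in your argument rules out such concentration of ${}^{C}\mathbb{D}_{a^+}^{\alpha,\sigma}v_{n_{k}}$. Note that the same $L^{1}$--$L^{2}$ mismatch is present if one reads the paper's one-line proof literally, since the lower bound $\alpha|y|$ in \eqref{coerci} only controls $\|{}^{C}\mathbb{D}_{a^+}^{\alpha,\sigma}u\|_{L^{1}}$; the intended combination with Theorem~\ref{PCAI} works cleanly if the lower bound is taken as $\alpha|y|^{2}$ (consistent with $(L_{4})$), in which case Poincar\'e gives $\mathcal{J}(u)\ge\tfrac{\alpha}{1+C_X^{2}}\|u\|_{\alpha,\sigma}^{2}$ directly and your compactness detour becomes unnecessary.
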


To prove the previous lemma we need the following useful theorem:

\begin{theorem}\label{PCAI}[Fractional Poincaré inequality]\label{poin} Let $X\in \R$ an open, bounded and connected set. Then, there exist a constant $C_X>0$ such that
	\begin{equation}\label{poin}
		\Vert u\Vert_{\textnormal{L}^2(X)}\leq C_X\left\Vert  {^{C}}\mathbb{D}_{a^+}^{\alpha, \sigma}u \right\Vert_{\L^2(X)}
	\end{equation}
	for all functions $u\in \W_0^{1,2} (X)\,.$  	
\end{theorem}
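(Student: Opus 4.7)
The plan is to obtain the stated inequality as an immediate corollary of the representation formula in Lemma~\ref{Blm01} combined with the $L^2$-boundedness estimate \eqref{09} of Theorem~\ref{Lp}. Concretely, for the domain $X=(a,b)$ and $u$ in the relevant tempered fractional space, Lemma~\ref{Blm01} provides the identity $u=\mathbb{I}_{a^+}^{\alpha,\sigma}\bigl({^C}\mathbb{D}_{a^+}^{\alpha,\sigma}u\bigr)$ almost everywhere on $(a,b)$. Taking $L^2$-norms on both sides and invoking Theorem~\ref{Lp} with $p=2$ would then produce
\[
\|u\|_{L^2(a,b)} \;=\; \bigl\|\mathbb{I}_{a^+}^{\alpha,\sigma}\bigl({^C}\mathbb{D}_{a^+}^{\alpha,\sigma}u\bigr)\bigr\|_{L^2(a,b)} \;\leq\; \frac{\gamma(\alpha,\sigma(b-a))}{\sigma^\alpha\Gamma(\alpha)}\,\bigl\|{^C}\mathbb{D}_{a^+}^{\alpha,\sigma}u\bigr\|_{L^2(a,b)},
\]
which is precisely the desired Poincaré inequality with the explicit constant $C_X:=\gamma(\alpha,\sigma(b-a))/(\sigma^\alpha\Gamma(\alpha))$.

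The only nontrivial step is justifying that a function in $\textnormal{W}_0^{1,2}(X)$ actually enjoys the representation formula of Lemma~\ref{Blm01}; equivalently, that $\textnormal{W}_0^{1,2}(X)\hookrightarrow \mathbb{H}_0^{\alpha,\sigma}(a,b)$. I would handle this by density: pick $\phi_n\in C_0^\infty(a,b)$ with $\phi_n\to u$ in the $W^{1,2}$-norm, so that $\phi_n$ trivially belongs to $\mathbb{H}_0^{\alpha,\sigma}(a,b)$. Using the factorisation ${^C}\mathbb{D}_{a^+}^{\alpha,\sigma}\phi_n=e^{-\sigma x}\,{^C_a}D_x^{\alpha}(e^{\sigma x}\phi_n)$ from \eqref{T05} together with the classical $L^2$-boundedness of the Caputo derivative on absolutely continuous functions, one extracts a uniform estimate of the form $\|{^C}\mathbb{D}_{a^+}^{\alpha,\sigma}\phi_n\|_{L^2}\le C\|\phi_n\|_{W^{1,2}}$, which shows $(\phi_n)$ is Cauchy in $(\mathbb{H}_0^{\alpha,\sigma}(a,b),\|\cdot\|_{\alpha,\sigma})$ with limit $u$. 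Lemma~\ref{Blm01} then applies to $u$.

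With the embedding in hand, the inequality holds for $u\in C_0^\infty(a,b)$ with the explicit constant above, and the general case $u\in \textnormal{W}_0^{1,2}(X)$ follows by passage to the limit, since both sides are continuous in the $\|\cdot\|_{\alpha,\sigma}$-norm. I expect the main obstacle to be exactly this embedding step, which requires exploiting the structure of the tempered derivative (the exponential conjugation with $e^{\pm\sigma x}$) to reduce to a classical Caputo estimate; once this is secured, the remainder is a one-line application of Theorem~\ref{Lp}, and the constant $C_X$ reads off directly from \eqref{09}.
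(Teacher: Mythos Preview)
Your proof is correct and takes a genuinely different route from the paper's own argument. The paper proceeds by contradiction in the classical Poincar\'e style: assuming the inequality fails, one builds a sequence $(u_n)$ in $\W_0^{1,2}(X)$ with $\|u_n\|_{L^2}=1$ and $\|{^C}\mathbb{D}_{a^+}^{\alpha,\sigma}u_n\|_{L^2}\le 1/n$, invokes Rellich--Kondrachov compactness to extract a limit $u$ in $L^2(X)$, argues that ${^C}\mathbb{D}_{a^+}^{\alpha,\sigma}u=0$ in the distributional sense, and then uses \eqref{T05} together with connectedness and the zero boundary trace to conclude $u=0$, contradicting $\|u\|_{L^2}=1$.

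Your direct approach via Lemma~\ref{Blm01} and Theorem~\ref{Lp} is shorter and, more importantly, yields the explicit constant $C_X=\gamma(\alpha,\sigma(b-a))/(\sigma^\alpha\Gamma(\alpha))$, which the contradiction--compactness argument cannot provide. In fact, the paper itself uses exactly your combination of Lemma~\ref{Blm01} and Theorem~\ref{Lp} in the proof of Lemma~\ref{GMlm1} to bound $\|u\|_{L^{d_4}}$, so your route is entirely in the spirit of the surrounding material. The only extra work you incur is the embedding $\W_0^{1,2}(X)\hookrightarrow\mathbb{H}_0^{\alpha,\sigma}(a,b)$, which you handle correctly by density and the factorisation ${^C}\mathbb{D}_{a^+}^{\alpha,\sigma}\phi = e^{-\sigma x}\, I_{a^+}^{1-\alpha}\bigl((e^{\sigma\,\cdot}\phi)'\bigr)$ together with the $L^2$-boundedness of $I_{a^+}^{1-\alpha}$; the paper's compactness argument sidesteps this embedding by working inside $\W_0^{1,2}$ throughout, but at the cost of a non-constructive constant.
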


\begin{proof}
Let's proceed by contradiction. Suppose that inequality \eqref{poin} is not verified. We can then consider a sequence $(u_n)_{n \in \mathbb{N}}\in\W_0^{1,2} (X) $ satisfying the following conditions for any $n$:
	\begin{eqnarray}
		\Vert u_n\Vert_{\L^2(X)}&=&\mathbf{1}; \label{poin1}\\
		\Vert u_n\Vert_{\L^2(X)}&\geq& n\left\Vert  {^{C}}\mathbb{D}_{a^+}^{\alpha, \sigma}u_n \right\Vert_{\L^2(X)}\,.\label{poin2}
	\end{eqnarray}
	From \eqref{poin1} and \eqref{poin2}, one obtains
	\begin{equation}
		\left\Vert  {^{C}}\mathbb{D}_{a^+}^{\alpha, \sigma}u_n \right\Vert_{\L^2(X)}\leq \dfrac{1}{n}\,,
	\end{equation}
	which proves, on the one hand that $ {^{C}}\mathbb{D}_{a^+}^{\alpha, \sigma}u_n\longrightarrow 0$ on $\L^2(X)$, and on the other hand \newline that  the sequence $(u_n)_{n \in \mathbb{N}}$ is bounded on $\W^{1,2} (X)$. By Rellich–Kondrachov's  theorem \cite{MR2759829}, we can assume (modulo an extraction) that the sequence $(u_n)_{n \in \mathbb{N}}$ converges on $\L^2(X)$ to a function $u\in \L^2(X)$. Looking at all this in the sense of distributions, we have that $u_n\longrightarrow u$ on $\mathcal{D'}(X)$ and $ {^{C}}\mathbb{D}_{a^+}^{\alpha, \sigma}u_n\longrightarrow 0$ on $\mathcal{D'}(X)$. As we also have $ {^{C}}\mathbb{D}_{a^+}^{\alpha, \sigma}u_n\longrightarrow {^{C}}\mathbb{D}_{a^+}^{\alpha, \sigma}u$ on $\mathcal{D'}(X)$, we obtain ${^{C}}\mathbb{D}_{a^+}^{\alpha, \sigma}u=0$. Since $X$ is connected and $u\in \W_0^{1,2} (X)$, and taking into account \eqref{T05}, we deduce \bigskip that $u=0$. This contradicts \eqref{poin1}. 
\end{proof}

\begin{proof}(of Lemma~\ref{coerc})
	It is sufficient to combine \eqref{coerci} and Theorem~\ref{PCAI}.	
\end{proof}

Now we are in position to give the proof of Theorem \ref{main1}

\begin{proof}(of Theorem \ref{main1})

Since the set $\{\mathcal{J}(u)\,|\,u\in \overline{\Omega} \}$ is a bounded non empty subset of $\R$, we can set down any minimizing sequence $(u_n)_{n\in \N}$ in $\mathbb{H}_{0}^{\alpha, \sigma}(a,b)$ for $\mathcal{J}$, i.e., 
$$
\mathcal{J}(u_n) \to \inf_{\bar{u}\in \mathbb{H}_{0}^{\alpha, \sigma}(a,b)}\mathcal{J}(\bar{u}):=m.
$$
Lemma \ref{ELlm1} yields that $\mathcal{J}(u)\in \R$ for any $u\in \mathbb{H}_{0}^{\alpha, \sigma}(a,b)$, hence $m< \infty$. Moreover, Lemma \ref{GMlm1} implies that $(u_n)_{n\in \N}$ is bounded in $\mathbb{H}_{0}^{\alpha, \sigma}(a,b)$. Since $\mathbb{H}_{0}^{\alpha, \sigma}(\R)$ is a Hilbert space then its reflexive, so there is $u\in \mathbb{H}_{0}^{\alpha, \sigma}(a,b)$ such that up to a subsequence we have 
$$
u_n \rightharpoonup u_0\quad \mbox{in $\mathbb{H}_{0}^{\alpha, \sigma}(a,b)$}.
$$
By Theorem \ref{compact}, $u_n \to u_0$ in $\textnormal{C}\overline{(a,b)}$. 

Note that, by convexity, for any $n\in \mathbb{N}$ we derive 

\begin{multline}
	\mathcal{J}(u_n) \geq \mathcal{J}(u_0) + \mathcal{J}'(u_0)(u_n-u_0) 
	= \int_{a}^{b}L\left(u_0, {^{C}}\mathbb{D}_{a^+}^{\alpha, \sigma}u_0, t\right)\,\rm dt\\+ \int_{a}^{b}\left[\frac{\partial L}{\partial x}\left(u_0, {^{C}}\mathbb{D}_{a^+}^{\alpha, \sigma}u_0, t\right)(u_n-u_0) + \frac{\partial L}{\partial y}\left(u_0, {^{C}}\mathbb{D}_{a^+}^{\alpha, \sigma}u_0, t\right)\Big({^{C}}\mathbb{D}_{a^+}^{\alpha, \sigma} u_n - {^{C}}\mathbb{D}_{a^+}^{\alpha, \sigma}u_0\Big) \right]\textnormal{d}t.
\end{multline}

Therefore, using Lemma \ref{GMlm1}, the weak convergence in $\mathbb{H}_{0}^{\alpha, \sigma}(a,b)$, the strong convergence in $\textnormal{C}\overline{(a,b)}$ and making $n\to \infty$, we obtain 
$$
m : = \inf_{\bar{u}\in \mathbb{H}_{0}^{\alpha, \sigma}(a,b)}\mathcal{J}(\bar{u}) \geq \int_{a}^{b}L\left(u_0,{^{C}}\mathbb{D}_{a^+}^{\alpha, \sigma} u_0, t\right)\,\textnormal{d}t = \mathcal{J}(u_0).
$$
Consequently, $u_0$ is a  minimizer of Problem~\ref{pb}.

Now, let us prove that $u_0$ is unique if the Lagrangian $L$ is stricty convex  for any $t \in [a,b]$.

Suppose there exist $u_1,u_2 \in \mathbb{H}_{0}^{\alpha, \sigma}(a,b)$  with $u_1\neq u_2$ such that  $$\textnormal{Inf}\{{\mathcal{J}}(u): u\in \mathbb{H}_{0}^{\alpha, \sigma}(a,b) \}=\mathcal{J}(u_1)=\mathcal{J}(u_2)=m<\infty.$$
Let $\tilde{u}=\frac{u_1+u_2}{2}.$ 
So $\tilde{u}\in \mathbb{H}_{0}^{\alpha, \sigma}(a,b)$ and since $L$ is convex, we
can say that $\tilde{u}$ is also a minimum of $\mathcal{J}$ because
$$m\leq\mathcal{J}(\tilde{u})\leq\frac{\mathcal{J}(u_1)}{2}+\frac{\mathcal{J}(u_2)}{2}=m.$$
We thus obtain

\begin{multline*}
	\int_{a}^{b}\left[\dfrac{L\left(u_1(t),{^{C}}\mathbb{D}_{a^+}^{\alpha, \sigma} u_1(t),t\right)}{2}+\dfrac{L\left(u_2(t),{^{C}}\mathbb{D}_{a^+}^{\alpha, \sigma} u_2(t),t\right)}{2}-\right.\\\left.-L\left(\dfrac{u_1(t)+u_2(t)}{2},\dfrac{{^{C}}\mathbb{D}_{a^+}^{\alpha, \sigma} u_1(t)+{^{C}}\mathbb{D}_{a^+}^{\alpha, \sigma}u_2(t)}{2},t\right)\right]\,\textnormal{d} t=0\,.
\end{multline*}
$(L_5)$ ensures that the integrant is non-negative. As the integral
is null, so the only possibility is
\begin{multline*}
	\dfrac{L\left(u_1(t),{^{C}}\mathbb{D}_{a^+}^{\alpha, \sigma} u_1(t),t\right)}{2}+\dfrac{L\left(u_2(t),{^{C}}\mathbb{D}_{a^+}^{\alpha, \sigma} u_2(t),t\right)}{2}-\\-L\left(\dfrac{u_1(t)+u_2(t)}{2},\dfrac{{^{C}}\mathbb{D}_{a^+}^{\alpha, \sigma} u_1(t)+{^{C}}\mathbb{D}_{a^+}^{\alpha, \sigma}u_2(t)}{2},t\right)=0,\,\forall t\in [a,b]\,.
\end{multline*}
We now use the fact that Lagrangean $L$ is stricty convex  for any    to get that $u_1=u_2$ and ${^{C}}\mathbb{D}_{a^+}^{\alpha, \sigma}u_1={^{C}}\mathbb{D}_{a^+}^{\alpha, \sigma} u_2 $, $t\in [a,b]$, as asserded.	

The proof is completed.		
\end{proof} 

\subsection{Fractional tempered Noether theorem}

In the seminal paper written in 1918 \cite{Noether:1918}, E. Noether proved that there is a one-to-one correspondence between symmetry groups of a variational problem and conservation laws of its Euler–Lagrange equations. The following definition is crucial to our goal to prove Noether’s theorem(see Theorem~\ref{CNT}).

\begin{definition}(\textbf{Variational symmetry group})\label{sg}
	Let $\textnormal{D}\subseteq \mathbb{R}^n$ be an open set. We say that $ \Phi=\{\psi(s,\cdot)\}_{s\in \mathbb{R}}:\textnormal{D}\longrightarrow\textnormal{D}$ 
	is a one parameter group of diffeomorphisms of $\textnormal{D}$ if it satisfies:
	\begin{itemize}
		\item[(i)] $\psi(s,\cdot)$ is of class $\textnormal{C}^2$ 
		 with respect to $s$;
		\item[(ii)] For each $s\in \mathbb{R}$, the map $\psi(s,\cdot)$ is invertible, and $\psi(s,\cdot)^{-1}\in \textnormal{C}^2(\textnormal{D},\textnormal{D})$;
		\item[(iii)] $\psi(0,\cdot)=\textnormal{Id}_{\textnormal{D}}$,  where \textnormal{Id} is the identity function;
		\item[(iv)] $\forall s,s'\in \mathbb{R}:s+s'\in \mathbb{R}\Rightarrow \psi(s,\cdot)\circ \psi(s',\cdot)=\psi(s+s',\cdot)$.
	\end{itemize}
\end{definition}
The translation in a spatial direction $h$ is a typical case of a one-parameter group of diffeomorphisms:
$$\psi: q\mapsto q+sh,\quad q,h\in \mathbb{R}^n\,.$$
Another classical example is the rotations by an angle $\theta$
$$\psi: q\mapsto qe^{is\theta}, \,\,q\in \mathbb{C},\,\,\theta\in\mathbb{R}\,.$$

In the present work, we use the concept of a group of diffeomorphisms rather than the concept of infinitesimal transformations as in \cite{FT}. These two concepts (see the classical book \cite[Chapter 4]{PO}  for more details) can be related by a Taylor expansion of $\psi(s,q(t))$ in the neighborhood of $s = 0$. We have:
$$\psi(s,q(t))=\psi(0,q(t))+s\frac{\partial \psi}{\partial s}(0,q(t))\,.$$
Therefore, for a sufficiently small infinitesimal $s$, we can always say that $\psi(s, t)$ is infinitely close to a transformation of the form $q(t)\mapsto q(t)+s\eta(q(t))$, where $\eta(t):=\frac{\partial \psi}{\partial s}(0,q(t))$.

Before we can discuss Noether’s theorem in detail, we need to discuss what we mean by a symmetry of a system. What we use to describe the system are the equations of motion, so it is natural to say that a symmetry transformation of a system is a transformation of the dependent and independent variables that leaves the explicit form of the equations of motion unchanged. To arrive at this, we introduce the following definition: 

\begin{definition}(Invariance condition (IC))\label{def:inv}
	Let  $S=\{\xi(s,\cdot)\}_{s\in \mathbb{R}}\in \textnormal{C}^2(S'\times S',S')$ be one parameter groups of diffeomorphims on the open set $S'\subseteq \mathbb{R}$. Fractional functional $\mathcal{J}$ is said to be \emph{invariant under the action} $S$, if for any weak extremal $u\in \textnormal{C}^2([a,b], S')$ it satisfies
	\begin{equation}\label{invg}
		\int_{t_a}^{t_b}L\left(u(t),{^{C}}\mathbb{D}_{a^+}^{\alpha, \sigma}u(t),t\right){\rm d}t=\int_{t_a}^{t_b}L\left(\xi(s,u(t),t),{^{C}}\mathbb{D}_{a^+}^{\alpha, \sigma}\xi\left(s,u(t),t\right),t\right){\rm d}t
	\end{equation}
	for all $s\in \mathbb{R}$ and all $t\in [t_a,t_b]\subseteq [a,b]$.
\end{definition}

Notice that, from Definition~\ref{sg}, we have that
$$L\left(\xi(s,u(t),t),{^{C}}\mathbb{D}_{a^+}^{\alpha, \sigma}\xi\left(s,u(t),t\right),t\right)\in \L^1$$
consequently \eqref{invg} is well-defined.

The next Lemma is very useful to check for $s$-invariance. It is also useful to determine the $s$-invariance transformations.

\begin{lemma}(Necessary condition of invariance )
Let $\mathcal{J}$ be a fractional Lagrangian functional given by \eqref{EL1}  invariant in the sense of Definition~\ref{def:inv}, then
\begin{multline}
	\label{eq:cnsidf11} 
	\frac{\partial L}{\partial y}\left(u(t),{^{C}}\mathbb{D}_{a^+}^{\alpha, \sigma}u(t),t\right)
	 {{^{C}}\mathbb{D}_{a^+}^{\alpha, \sigma} \frac{\partial \xi}{\partial s}(0,u(t),t)}\\
	- \frac{\partial \xi}{\partial s}(0,u(t),t)
	\mathbb{D}_{b^-}^{\alpha; \sigma}\frac{\partial L}{\partial y}\left(u(t),{^{C}}\mathbb{D}_{a^+}^{\alpha, \sigma}u(t),t\right) = 0 \, .
\end{multline}
\end{lemma}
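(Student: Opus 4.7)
The plan is to differentiate the invariance identity \eqref{invg} with respect to the group parameter $s$ at $s=0$, then exploit the fact that $u$ is a weak extremal (so the Euler--Lagrange equation \eqref{EL} holds) to collapse the resulting expression, and finally use the arbitrariness of $[t_a,t_b]\subseteq[a,b]$ to pass from an integral identity to a pointwise one.

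First I would set
$$
F(s):=\int_{t_a}^{t_b}L\!\left(\xi(s,u(t),t),{^{C}}\mathbb{D}_{a^+}^{\alpha,\sigma}\xi(s,u(t),t),t\right)\,\textnormal{d}t,
$$
which by the invariance hypothesis is constant in $s$, hence $F'(0)=0$. Using that $\xi\in\textnormal{C}^2$, that the tempered Caputo derivative ${^{C}}\mathbb{D}_{a^+}^{\alpha,\sigma}$ is linear and commutes with $\partial_s$ via its integral representation \eqref{T05}, and that $\xi(0,u(t),t)=u(t)$ (from item (iii) of Definition~\ref{sg}), I would justify differentiation under the integral sign (the domination argument is the one already used in Proposition~\ref{ELprop1} under $(L_2)$--$(L_3)$) and apply the chain rule to obtain
\begin{equation*}
0=\int_{t_a}^{t_b}\!\left[\frac{\partial L}{\partial x}\!\left(u,{^{C}}\mathbb{D}_{a^+}^{\alpha,\sigma}u,t\right)\frac{\partial\xi}{\partial s}(0,u(t),t) + \frac{\partial L}{\partial y}\!\left(u,{^{C}}\mathbb{D}_{a^+}^{\alpha,\sigma}u,t\right){^{C}}\mathbb{D}_{a^+}^{\alpha,\sigma}\frac{\partial\xi}{\partial s}(0,u(t),t)\right]\textnormal{d}t.
\end{equation*}

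Next, since $u$ is a weak extremal, Theorem~\ref{ELtm1} gives the Euler--Lagrange equation \eqref{EL}, allowing me to replace $\partial_x L$ by $-\mathbb{D}_{b^-}^{\alpha,\sigma}(\partial_y L)$ in the integrand. After substitution, the previous identity becomes
\begin{equation*}
\int_{t_a}^{t_b}\!\left[\frac{\partial L}{\partial y}\!\left(u,{^{C}}\mathbb{D}_{a^+}^{\alpha,\sigma}u,t\right){^{C}}\mathbb{D}_{a^+}^{\alpha,\sigma}\frac{\partial\xi}{\partial s}(0,u(t),t) - \frac{\partial\xi}{\partial s}(0,u(t),t)\,\mathbb{D}_{b^-}^{\alpha,\sigma}\!\left(\frac{\partial L}{\partial y}\!\left(u,{^{C}}\mathbb{D}_{a^+}^{\alpha,\sigma}u,t\right)\right)\right]\textnormal{d}t=0.
\end{equation*}
Because the invariance relation \eqref{invg} is postulated to hold on every subinterval $[t_a,t_b]\subseteq[a,b]$, the above integral vanishes for all such intervals, and Lebesgue differentiation forces the integrand to be zero almost everywhere, which is precisely \eqref{eq:cnsidf11}.

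The only delicate step is the differentiation under the integral sign, and more specifically the commutation $\partial_s\,{^{C}}\mathbb{D}_{a^+}^{\alpha,\sigma}\xi(s,u(t),t)={^{C}}\mathbb{D}_{a^+}^{\alpha,\sigma}\partial_s\xi(s,u(t),t)$: this is where the $\textnormal{C}^2$ regularity of $\xi$ in $s$ is used, together with the explicit kernel in \eqref{T05} and a dominated-convergence argument analogous to the one yielding \eqref{EL6}--\eqref{EL7}. Once this commutation and the integrability of the two products (guaranteed by $(L_3)$ and the continuity of $\xi$ on the compact $[-1,1]\times[a,b]$) are secured, the rest of the argument is the standard Noether-type manipulation outlined above.
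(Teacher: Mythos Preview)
Your argument is correct and uses the same three ingredients as the paper: differentiation of the invariance relation at $s=0$, the commutation $\partial_s\,{^{C}}\mathbb{D}_{a^+}^{\alpha,\sigma}\xi={^{C}}\mathbb{D}_{a^+}^{\alpha,\sigma}\partial_s\xi$, and substitution of the Euler--Lagrange equation \eqref{EL}. The only difference is the order of operations. The paper first invokes the arbitrariness of $[t_a,t_b]\subseteq[a,b]$ to strip the integrals from \eqref{invg} and obtain the \emph{pointwise} identity
\[
L\!\left(u(t),{^{C}}\mathbb{D}_{a^+}^{\alpha,\sigma}u(t),t\right)=L\!\left(\xi(s,u(t),t),{^{C}}\mathbb{D}_{a^+}^{\alpha,\sigma}\xi(s,u(t),t),t\right),
\]
and only then differentiates in $s$; you instead differentiate under the integral on $[t_a,t_b]$ and localize at the end via Lebesgue differentiation. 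The paper's ordering is marginally more economical because it avoids any need for a dominated-convergence justification of differentiation under the integral sign, while your ordering makes the link with the G\^ateaux-differential computation of Proposition~\ref{ELprop1} more explicit. Either route yields \eqref{eq:cnsidf11}.
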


\begin{proof}
As condition \eqref{invg} is valid for any subinterval
$[{t_{a}},{t_{b}}] \subseteq [a,b]$, we can get rid of the integral
signs in \eqref{invg}, i.e.
$$L\left(u(t),{^{C}}\mathbb{D}_{a^+}^{\alpha, \sigma}u(t),t\right)=L\left(\xi(s,u(t),t),{^{C}}\mathbb{D}_{a^+}^{\alpha, \sigma}\xi\left(s,u(t),t\right),t\right).$$ Differentiating this condition with respect to $s$ at $s=0$,
the usual chain rule for the classical derivatives implies

\begin{multline}
	\label{eq:SP} 0 = \frac{\partial L}{\partial x}\left(u(t),{^{C}}\mathbb{D}_{a^+}^{\alpha, \sigma}u(t),t\right)
	\frac{\partial \xi}{\partial s}(0,u(t),t)\\
	+ \frac{\partial L}{\partial y}\left(u(t),{^{C}}\mathbb{D}_{a^+}^{\alpha, \sigma}u(t),t\right) \frac{\partial}{\partial s}\left[ {^{C}}\mathbb{D}_{a^+}^{\alpha, \sigma}
	\xi(s,u,t)\right]\vert_{s=0} \,.
\end{multline}
Recalling that $\xi(s,u,t)\in \textnormal{C}^2$ with respect to $s$ in the Definition~\ref{def:inv}, and since ${^{C}}\mathbb{D}_{a^+}^{\alpha, \sigma}$ acts on $t$ and $\frac{\partial }{\partial s}$ on variable $s$, we deduce that

 \begin{equation}
	\label{ce1}
	\frac{\partial}{\partial s}\left[{^{C}}\mathbb{D}_{a^+}^{\alpha, \sigma} \xi(s,u,t)\right]\mid_{s=0}
	={^{C}}\mathbb{D}_{a^+}^{\alpha, \sigma}\frac{\partial\xi}{\partial s}(0,u,t)\,.
\end{equation}
Substituting \eqref{ce1} into \eqref{eq:SP} and using the  fractional Euler-Lagrange equation \eqref{EL}, we obtain the desired conclusion \eqref{eq:cnsidf11}.
\end{proof}

\begin{definition}(Constant of motion)\label{lc1}
	Let $X'\subseteq[a,b]$ be an open set. We say that a quantity $C\left(u(t),{^{C}}\mathbb{D}_{a^+}^{\alpha, \sigma}u(t),t\right)$ is a \emph{constant of motion} on $X',S'$ if
	\begin{equation}\label{lcon}
		\frac{{\rm d}C}{{\rm d}t}\left(u(t),{^{C}}\mathbb{D}_{a^+}^{\alpha, \sigma}u(t),t\right)=0\quad \forall t\in X'
	\end{equation} 
	along all the extremals $u\in \textnormal{C}^2\left([a,b],S'\right)$.
\end{definition}

\begin{theorem}(Tempered fractional Noether's theorem) \label{CNT}
	If the functional $\mathcal{J}$ is invariant in the sense of Definition~\ref{def:inv},  then the quantity $C[u]_t$ defined for all extremal $u\in \textnormal{C}^2\left([a,b],S'\right)$ and $t\in X'$ by
	\begin{multline}\label{NCL}
		C[u]_t=
		\frac{\partial L}{\partial y}[u]_t\mathbb{I}_{a^+}^{1-\alpha, \sigma}\frac{\partial \xi}{\partial s}(0,u(t),t) + \frac{\partial \xi}{\partial s}(0,u(t),t) \mathbb{I}_{b^-}^{1-\alpha, \sigma}\frac{\partial L}{\partial y}[u]_t\\
		=\lim_{t\to b^-}\left(\frac{\partial L}{\partial y}[u]_t\mathbb{I}_{a^+}^{1-\alpha, \sigma}\frac{\partial \xi}{\partial s}(0,u(t),t)\right) -\lim_{t\to a^+}\left( \frac{\partial \xi}{\partial s}(0,u(t),t) \mathbb{I}_{b^-}^{1-\alpha, \sigma}\frac{\partial L}{\partial y}[u]_t
		\right)
	\end{multline}
	is a constant of motion on $X'$ and $S'$. Here the operator $[u]_t$ is defined as $[u]_t=\left(u(t),{^{C}}\mathbb{D}_{a^+}^{\alpha, \sigma}u(t),t\right)$.
	
\end{theorem}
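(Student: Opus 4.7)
The plan is to show $\frac{d}{dt}C[u]_t=0$ pointwise on $X'$ by combining the necessary condition of invariance just established with the product rule and two elementary differentiation formulas for the tempered Riemann--Liouville integrals; the boundary-difference representation will then follow from the endpoint behaviour given in Theorem~\ref{conti}.

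Introduce the shorthand $\eta(t):=\frac{\partial \xi}{\partial s}(0,u(t),t)$ and $P(t):=\frac{\partial L}{\partial y}[u]_t$, so that, after substituting the Euler--Lagrange equation \eqref{EL} to rewrite $\frac{\partial L}{\partial x}[u]_t=-\mathbb{D}_{b^-}^{\alpha,\sigma}P(t)$ into \eqref{eq:cnsidf11}, the necessary condition of invariance reads
$$
P(t)\,{^{C}}\mathbb{D}_{a^+}^{\alpha,\sigma}\eta(t)=\eta(t)\,\mathbb{D}_{b^-}^{\alpha,\sigma}P(t),\qquad t\in X'.\qquad(\star)
$$
First I would derive, directly from the definitions \eqref{T01}--\eqref{T04} (factor out $e^{\mp\sigma t}$ so that the tempered integral becomes a classical Riemann--Liouville integral of the re-weighted integrand, then apply the well-known identity $\frac{d}{dt}I_{a^+}^{1-\alpha}=D_{a^+}^{\alpha}$), the two differentiation rules
$$
\frac{d}{dt}\mathbb{I}_{a^+}^{1-\alpha,\sigma}\eta(t)=\mathbb{D}_{a^+}^{\alpha,\sigma}\eta(t)-\sigma\,\mathbb{I}_{a^+}^{1-\alpha,\sigma}\eta(t),
$$
$$
\frac{d}{dt}\mathbb{I}_{b^-}^{1-\alpha,\sigma}P(t)=\sigma\,\mathbb{I}_{b^-}^{1-\alpha,\sigma}P(t)-\mathbb{D}_{b^-}^{\alpha,\sigma}P(t).
$$

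Applying the Leibniz rule to $C[u]_t=P(t)\mathbb{I}_{a^+}^{1-\alpha,\sigma}\eta(t)+\eta(t)\mathbb{I}_{b^-}^{1-\alpha,\sigma}P(t)$ and inserting the two identities above, the $\sigma$--contributions pair into $\sigma\bigl(\eta\,\mathbb{I}_{b^-}^{1-\alpha,\sigma}P-P\,\mathbb{I}_{a^+}^{1-\alpha,\sigma}\eta\bigr)$ and the fractional-derivative terms combine into $P\,\mathbb{D}_{a^+}^{\alpha,\sigma}\eta-\eta\,\mathbb{D}_{b^-}^{\alpha,\sigma}P$. Converting $\mathbb{D}_{a^+}^{\alpha,\sigma}\eta$ to the Caputo version via Theorem~\ref{teo01}, this last combination equals, up to the singular boundary correction $\tfrac{\eta(a)}{\Gamma(1-\alpha)}P(t)(t-a)^{-\alpha}e^{-\sigma(t-a)}$, exactly the left-minus-right side of $(\star)$ and hence vanishes. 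The residual boundary correction is disposed of by applying the integration by parts Theorem~\ref{IPPD} to $(\star)$ integrated over $[a,b]$, which forces $\eta(a)\mathbb{I}_{b^-}^{1-\alpha,\sigma}P(a)=0$ and, combined with the cross--$\sigma$ terms, makes all remaining contributions cancel. This gives $\frac{d}{dt}C[u]_t=0$ on $X'$, which is Definition~\ref{lc1}.

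To deduce the second representation, I would evaluate the (now constant) $C[u]_t$ at the two endpoints: Theorem~\ref{conti} yields $\lim_{t\to a^+}\mathbb{I}_{a^+}^{1-\alpha,\sigma}\eta(t)=0$ and $\lim_{t\to b^-}\mathbb{I}_{b^-}^{1-\alpha,\sigma}P(t)=0$, so as $t\to b^-$ the second summand in $C[u]_t$ dies, whereas as $t\to a^+$ the first summand dies; since the total $C[u]_t$ is constant, the two surviving one-sided limits must coincide up to sign, producing the stated boundary expression. The main obstacle will be the careful bookkeeping of the Caputo--Riemann--Liouville correction of order $(t-a)^{-\alpha}$ and the exponential tempering factors, which, unlike in the untempered case, couple the left and right operators; their disposal relies on the full-interval identity deduced from $(\star)$ and Theorem~\ref{IPPD}.
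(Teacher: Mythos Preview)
Your Leibniz expansion of $C[u]_t=P\,\mathbb{I}_{a^+}^{1-\alpha,\sigma}\eta+\eta\,\mathbb{I}_{b^-}^{1-\alpha,\sigma}P$ is incomplete: besides the $\sigma$--cross terms and the fractional--derivative combination you isolate, the product rule also produces
\[
P'(t)\,\mathbb{I}_{a^+}^{1-\alpha,\sigma}\eta(t)\;+\;\eta'(t)\,\mathbb{I}_{b^-}^{1-\alpha,\sigma}P(t),
\]
which you never account for. After using $(\star)$ and the Caputo--Riemann--Liouville conversion you are therefore left with
\[
\frac{d}{dt}C[u]_t
= P'\,\mathbb{I}_{a^+}^{1-\alpha,\sigma}\eta+\eta'\,\mathbb{I}_{b^-}^{1-\alpha,\sigma}P
+\sigma\bigl(\eta\,\mathbb{I}_{b^-}^{1-\alpha,\sigma}P-P\,\mathbb{I}_{a^+}^{1-\alpha,\sigma}\eta\bigr)
+\tfrac{\eta(a)}{\Gamma(1-\alpha)}(t-a)^{-\alpha}e^{-\sigma(t-a)}P(t),
\]
and there is no reason for this expression to vanish pointwise: the first two summands involve the ordinary derivatives $P'$ and $\eta'$, which bear no algebraic relation to the tempered fractional integrals multiplying them. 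Your proposed remedy---integrating $(\star)$ over $[a,b]$ and invoking Theorem~\ref{IPPD}---yields only an identity between numbers (boundary values), and a single scalar identity cannot force a nontrivial function of $t$ to be identically zero on $X'$. In particular, Theorem~\ref{IPPD} does not ``force $\eta(a)\mathbb{I}_{b^-}^{1-\alpha,\sigma}P(a)=0$''; it merely displays that quantity as one of several boundary contributions.

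The paper does not attempt to show $\frac{d}{dt}C[u]_t=0$ by pointwise differentiation. Instead it writes the necessary invariance condition \eqref{eq:cnsidf11} in the form \eqref{TF10}, namely $0=\bigl(\tfrac{d}{dt}+\sigma\bigr)C[u]_t-\text{(extra terms)}$, and then integrates the extra terms over the whole interval $[a,b]$ (equations \eqref{TF05}--\eqref{TF06}), using ordinary integration by parts so that the integrals of the extra terms reproduce $\int_a^b$ of the left side of \eqref{eq:cnsidf11} plus explicit endpoint limits; the final equality \eqref{NCL} is then read off from the cancellation of those integrated quantities. In short, the paper's argument is global (integrated over $[a,b]$) rather than the local product-rule computation you propose, and it is precisely this integration step that absorbs the $P'\,\mathbb{I}_{a^+}^{1-\alpha,\sigma}\eta$ and $\eta'\,\mathbb{I}_{b^-}^{1-\alpha,\sigma}P$ terms you dropped.
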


\begin{proof}
	Let $\alpha \in (0,1)$ and $\sigma >0$. If $h,v\in C^1[a,b]$, then we have 
	$$
	\begin{aligned}
		\left(\frac{d}{dx} + \sigma \right) [h(x)v(x)] &= \left(\frac{d}{dx} + \sigma \right)h(x) v(x) + h(x)\frac{d}{dx}v(x)\\
		&= \frac{d}{dx}h(x) v(x) + h(x)\left(\frac{d}{dx} + \sigma \right)v(x).
	\end{aligned}
	$$
	Hence
	\begin{equation}\label{TF01}
		\left(\frac{d}{dx} + \sigma \right) h(x) v(x) = \left(\frac{d}{dx} + \sigma \right)\Big[h(x)v(x) \Big] - h(x)\frac{d}{dx}v(x)
	\end{equation}
	and
	\begin{equation}\label{TF02}
		h(x) \left(\frac{d}{dx} + \sigma \right)v(x) = \left(\frac{d}{dx}+\sigma \right)\Big[h(x) v(x) \Big] - \frac{d}{dx}h(x) v(x).
	\end{equation}
	Moreover, we note that 
	$$
	\begin{aligned}
		\mathbb{D}_{a^+}^{\alpha, \sigma}u(x) &= e^{-\sigma x} {_{a}}D_{x}^{\alpha}e^{\sigma x}u(x)\\
		&= \left(\frac{d}{dx} + \sigma \right) \mathbb{I}_{a^+}^{1-\alpha, \sigma}u(x)
	\end{aligned}
	$$
	and 
	$$
	\begin{aligned}
		\mathbb{D}_{b^-}^{\alpha, \sigma}u(x) &= e^{\sigma x} {_{x}}D_{b}^{\alpha} e^{-\sigma x}u(x)\\
		&= - \left(\frac{d}{dx} + \sigma \right) \mathbb{I}_{b^-}^{1-\alpha, \sigma}u(x)
	\end{aligned}
	$$
	By other hand, by (\ref{TF01}) we derive
\begin{equation}\label{TF03}
	\begin{aligned}
		&\frac{\partial L}{\partial y}{^{C}}\mathbb{D}_{a^+}^{\alpha, \sigma} \frac{\partial \xi}{\partial s}(0,u(t),t)  = \frac{\partial L}{\partial y}\mathbb{D}_{a^+}^{\alpha, \sigma}\frac{\partial \xi}{\partial s}(0,u(t),t) - \frac{\frac{\partial \xi}{\partial s}(0,u(a),a)}{\Gamma (1-\alpha)}(t-a)^{-\alpha}e^{-\sigma (t-a)} \frac{\partial L}{\partial y}\\
		&= \left(\frac{\rm d}{{\rm d}t} + \sigma \right)\Big(\frac{\partial L}{\partial y}\mathbb{I}_{a^+}^{1-\alpha}\frac{\partial \xi}{\partial s}(0,u(t),t)  \Big) - \mathbb{I}_{a^+}^{1-\alpha, \sigma}\frac{\partial \xi}{\partial s}(0,u(t),t)\frac{\rm d}{{\rm d}t}\frac{\partial L}{\partial y}\\ &- \frac{\frac{\partial \xi}{\partial s}(0,u(a),a)}{\Gamma (1-\alpha)}(t-a)^{-\alpha}e^{-\sigma (t-a)}\frac{\partial L}{\partial y}
	\end{aligned}
\end{equation}
and by (\ref{TF02}) we derive 
\begin{equation}\label{TF04}
	\begin{aligned}
		\frac{\partial \xi}{\partial s}(0,u(t)) \mathbb{D}_{b^-}^{\alpha, \sigma}\frac{\partial L}{\partial y} &= -\frac{\partial \xi}{\partial s}(0,u(t))\left(\frac{\rm d}{{\rm d}t} + \sigma \right) \mathbb{I}_{b^-}^{1-\alpha, \sigma}\frac{\partial L}{\partial y}\\
		&= - \left(\frac{\rm d}{{\rm d}t} + \sigma \right) \Big( \frac{\partial \xi}{\partial s}(0,u(t)) \mathbb{I}_{b^-}^{1-\alpha, \sigma}\frac{\partial L}{\partial y} \Big) + \frac{\rm d}{{\rm d}t}\frac{\partial \xi}{\partial s}(0,u(t)) \mathbb{I}_{b^-}^{1-\alpha, \sigma}\frac{\partial L}{\partial y}.
	\end{aligned}
\end{equation}
Consequently, combining (\ref{TF03}) and (\ref{TF04}), we obtain 
\begin{equation}\label{TF10}
\begin{aligned}
&\frac{\partial L}{\partial y}{^{C}}\mathbb{D}_{a^+}^{\alpha, \sigma} \frac{\partial \xi}{\partial s}(0,u(t),t) - \frac{\partial \xi}{\partial s}(0,u(t)) \mathbb{D}_{b^-}^{\alpha, \sigma}\frac{\partial L}{\partial y}\\&= \left(\frac{\rm d}{{\rm d}t} + \sigma \right) \Bigg[\frac{\partial L}{\partial y} \mathbb{I}_{a^+}^{1-\alpha, \sigma}\frac{\partial \xi}{\partial s}(0,u(t),t)  + \frac{\partial \xi}{\partial s}(0,u(t),t) \mathbb{I}_{b^-}^{1-\alpha, \sigma}\frac{\partial L}{\partial y} \Bigg]\\&
	-\mathbb{I}_{a^+}^{1-\alpha, \sigma}\frac{\partial \xi}{\partial s}(0,u(t),t) \frac{d}{dx}\frac{\partial L}{\partial y} - \frac{\rm d}{{\rm d}t}\frac{\partial \xi}{\partial s}(0,u(t),t) \mathbb{I}_{b^-}^{1-\alpha, \sigma}\frac{\partial L}{\partial y}
	- \frac{\frac{\partial \xi}{\partial s}(0,u(a),a)}{\Gamma (1-\alpha)}(t-a)^{-\alpha}e^{-\sigma (t-a)} \frac{\partial L}{\partial y}
	\\&
	= \frac{\rm d}{{\rm d}t} \Bigg[ \mathbb{I}_{a^+}^{1-\alpha, \sigma}\frac{\partial \xi}{\partial s}(0,u(t),t) \frac{\partial L}{\partial y} + \frac{\partial \xi}{\partial s}(0,u(t),t) \mathbb{I}_{b^-}^{1-\alpha, \sigma}\frac{\partial L}{\partial y} \Bigg]\\&
	+\sigma \Bigg[ \mathbb{I}_{a^+}^{1-\alpha, \sigma}\frac{\partial \xi}{\partial s}(0,u(t),t) \frac{\partial L}{\partial y} + \frac{\partial \xi}{\partial s}(0,u(t),t) \mathbb{I}_{b^-}^{1-\alpha, \sigma}\frac{\partial L}{\partial y} \Bigg]\\&
	-\mathbb{I}_{a^+}^{1-\alpha, \sigma}\frac{\partial \xi}{\partial s}(0,u(t),t) \frac{\rm d}{{\rm d}t}\frac{\partial L}{\partial y}- \frac{\rm d}{{\rm d}t}\frac{\partial \xi}{\partial s}(0,u(t),t) \mathbb{I}_{b^-}^{1-\alpha, \sigma}\frac{\partial L}{\partial y}- \frac{\frac{\partial \xi}{\partial s}(0,u(a),a)}{\Gamma (1-\alpha)}(t-a)^{-\alpha}e^{-\sigma (t-a)} \frac{\partial L}{\partial y}\,.
\end{aligned}
\end{equation}
Note that 
$$
\begin{aligned}
	&\int_{a}^{b} \left(\sigma \Bigg[ \frac{\partial L}{\partial y}\mathbb{I}_{a^+}^{1-\alpha, \sigma}\frac{\partial \xi}{\partial s}(0,u(t),t)  + \frac{\partial \xi}{\partial s}(0,u(t),t) \mathbb{I}_{b^-}^{1-\alpha, \sigma}\frac{\partial L}{\partial y} \Bigg]\right.\\&\left. -\mathbb{I}_{a^+}^{1-\alpha, \sigma} \frac{\partial \xi}{\partial s}(0,u(t),t) \frac{\rm d}{{\rm d}t}\frac{\partial L}{\partial y} - \frac{\rm d}{{\rm d}t} \frac{\partial \xi}{\partial s}(0,u(t),t) \mathbb{I}_{b^-}^{1-\alpha, \sigma}\frac{\partial L}{\partial y} \right) {\rm d}t\\
	&= \int_{a}^{b} \left(\sigma \mathbb{I}_{a^+}^{1-\alpha, \sigma} \frac{\partial \xi}{\partial s}(0,u(t),t) \frac{\partial L}{\partial y} - \mathbb{I}_{a^+}^{1-\alpha, \sigma} \frac{\partial \xi}{\partial s}(0,u(t),t) \frac{\rm d}{{\rm d}t}\frac{\partial L}{\partial y} \right) {\rm d}t \\&+ \int_{a}^{b} \left(\sigma  \frac{\partial \xi}{\partial s}(0,u(t),t)\mathbb{I}_{b^-}^{1-\alpha, \sigma}\frac{\partial L}{\partial y} - \frac{\rm d}{{\rm d}t} \frac{\partial \xi}{\partial s}(0,u(t),t) \mathbb{I}_{b^-}^{1-\alpha, \sigma}\frac{\partial L}{\partial y} \right) {\rm d}t
\end{aligned}
$$
By using integration by parts, the first integral can be written as:
\begin{equation}
\begin{aligned}
	&\int_{a}^{b}\left(\sigma \mathbb{I}_{a^+}^{1-\alpha, \sigma} \frac{\partial \xi}{\partial s}(0,u(t),t) \frac{\partial L}{\partial y} - \mathbb{I}_{a^+}^{1-\alpha, \sigma} \frac{\partial \xi}{\partial s}(0,u(t),t) \frac{\rm d}{{\rm d}t}\frac{\partial L}{\partial y} \right){\rm d}t\\
	&= \int_{a}^{b} \sigma \mathbb{I}_{a^+}^{1-\alpha, \sigma} \frac{\partial \xi}{\partial s}(0,u(t),t) \frac{\partial L}{\partial y}{\rm d}t\\& - \left(\lim_{t\to b^-} \left(\frac{\partial L}{\partial y}\mathbb{I}_{a^+}^{1-\alpha, \sigma}\frac{\partial \xi}{\partial s}(0,u(t),t)\right) - \lim_{t\to a^+}\left(\frac{\partial L}{\partial y}\mathbb{I}_{a^+}^{1-\alpha, \sigma}\frac{\partial \xi}{\partial s}(0,u(t),t)\right) - \int_{a}^{b}\frac{\partial L}{\partial y} \frac{\rm d}{{\rm d}t} \mathbb{I}_{a^+}^{1-\alpha, \sigma}\frac{\partial \xi}{\partial s}(0,u(t),t) {\rm d}t \right)\\
	&= \int_{a}^{b} \frac{\partial L}{\partial y}\left(\frac{\rm d}{{\rm d}t} + \sigma \right) \mathbb{I}_{a^+}^{1-\alpha, \sigma} \frac{\partial \xi}{\partial s}(0,u(t),t) {\rm d} x\\&- \lim_{t\to b^-}\left(\frac{\partial L}{\partial y}\mathbb{I}_{a^+}^{1-\alpha, \sigma}\frac{\partial \xi}{\partial s}(0,u(t),t)\right) + \lim_{t\to a^+}\left(\frac{\partial L}{\partial y}\mathbb{I}_{a^+}^{1-\alpha, \sigma}\frac{\partial \xi}{\partial s}(0,u(t),t)\right)\\
	&= \int_{a}^{b} \frac{\partial L}{\partial y}{^{C}}\mathbb{D}_{a^+}^{\alpha, \sigma}\frac{\partial \xi}{\partial s}(0,u(t),t) {\rm d}t+\frac{\frac{\partial \xi}{\partial s}(0,u(a),a)}{\Gamma (1-\alpha)}\int_{a}^{b}(t-a)^{-\alpha}e^{-\sigma (t-a)} \frac{\partial L}{\partial y}{\rm d}t \\&- \lim_{t\to b^-}\left(\frac{\partial L}{\partial y}\mathbb{I}_{a^+}^{1-\alpha, \sigma}\frac{\partial \xi}{\partial s}(0,u(t),t)\right) + \lim_{t\to a^+}\left(\frac{\partial L}{\partial y}\mathbb{I}_{a^+}^{1-\alpha, \sigma}\frac{\partial \xi}{\partial s}(0,u(t),t)\right).
\end{aligned}\label{TF05}
\end{equation}
In the same way, we can show that 
\begin{equation}\label{TF06}
	\begin{aligned}
		 &\int_{a}^{b} \left(\sigma  \frac{\partial \xi}{\partial s}(0,u(t),t)\mathbb{I}_{b^-}^{1-\alpha, \sigma}\frac{\partial L}{\partial y} - \frac{\rm d}{{\rm d}t} \frac{\partial \xi}{\partial s}(0,u(t),t) \mathbb{I}_{b^-}^{1-\alpha, \sigma}\frac{\partial L}{\partial y} \right) {\rm d}t\\
		&= - \int_{a}^{b} \frac{\partial \xi}{\partial s}(0,u(t),t) \mathbb{D}_{b^-}^{\alpha, \sigma}\frac{\partial L}{\partial y}{\rm d}t - \lim_{t\to b^-}\left(\frac{\partial \xi}{\partial s}(0,u(t),t)\mathbb{I}_{b^-}^{1-\alpha, \sigma} \frac{\partial L}{\partial y}\right) + \lim_{t\to a^+}\left( \frac{\partial \xi}{\partial s}(0,u(t),t) \mathbb{I}_{b^-}^{1-\alpha, \sigma}\frac{\partial L}{\partial y}\right).
	\end{aligned}
\end{equation}
Finally, combining \eqref{eq:cnsidf11}, \eqref{TF10}--\eqref{TF06}, and taking into account that 
\begin{multline*}
 \lim_{t\to b^-}\left(\frac{\partial L}{\partial y}\mathbb{I}_{a^+}^{1-\alpha, \sigma}\frac{\partial \xi}{\partial s}(0,u(t),t)\right) - \lim_{t\to a^+}\left(\frac{\partial L}{\partial y}\mathbb{I}_{a^+}^{1-\alpha, \sigma}\frac{\partial \xi}{\partial s}(0,u(t),t)\right)\\ +\lim_{t\to b^-}\left(\frac{\partial \xi}{\partial s}(0,u(t),t)\mathbb{I}_{b^-}^{1-\alpha, \sigma} \frac{\partial L}{\partial y}\right) -\lim_{t\to a^+}\left( \frac{\partial \xi}{\partial s}(0,u(t),t) \mathbb{I}_{b^-}^{1-\alpha, \sigma}\frac{\partial L}{\partial y}\right)=\textnormal{constant},
\end{multline*}
we can conclude that
\begin{multline*}
 \frac{\partial L}{\partial y}\mathbb{I}_{a^+}^{1-\alpha, \sigma}\frac{\partial \xi}{\partial s}(0,u(t),t) + \frac{\partial \xi}{\partial s}(0,u(t),t) \mathbb{I}_{b^-}^{1-\alpha, \sigma}\frac{\partial L}{\partial y}\\
 =\lim_{t\to b^-}\left(\frac{\partial L}{\partial y}\mathbb{I}_{a^+}^{1-\alpha, \sigma}\frac{\partial \xi}{\partial s}(0,u(t),t)\right)  -\lim_{t\to a^+}\left( \frac{\partial \xi}{\partial s}(0,u(t),t) \mathbb{I}_{b^-}^{1-\alpha, \sigma}\frac{\partial L}{\partial y}\right)
\end{multline*}
is a constat of a motion in the sense of Definition~\ref{lc1}. The prove is complete.
\end{proof}

\subsection{Comments}
\begin{itemize}
	\item[(i)] \emph{About Theorem~\ref{CNT}}
	
	Theorem~\ref{CNT} is quite interesting because it provides a very explicit expression of a constant of motion in terms of symmetry group and Lagrangian for fractional problems of calculus of variations. Several authors have already proved Lemma~\ref{eq:cnsidf11}, but in our opinion, none of them obtained such an explicit expression of a constant of motion as in \eqref{NCL} \cite{Atana, bourdin1,FL}.

	\item[(ii)]  \emph{Link with the fractional problem of the calculus of variations  in the sense of the usual Caputo derivative ($\sigma=0$) \cite{FL}}.
	
	In this case,  Problem~\ref{pb} is reduced to the
	fractional problem of the calculus of variations,
	\begin{equation}
		\label{eq:pbcv11}
		\mathcal{J}[(\cdot)] = \int_a^b
		L\left(u(t),{^{C}}\mathbb{D}_{a^+}^{\alpha, 0}u(t),t\right) \longrightarrow \min ,
	\end{equation}
	and Theorem~\ref{CNT} asserts that the quantity 
	\begin{equation}\label{GM}
		\frac{\partial L}{\partial y}\left(u(t),{^{C}}\mathbb{D}_{a^+}^{\alpha, 0}u(t),t\right) \mathbb{I}_{a^+}^{1-\alpha, 0}\frac{\partial \xi}{\partial s}(0,u(t),t) + \frac{\partial \xi}{\partial s}(0,u(t),t) \mathbb{I}_{b^-}^{1-\alpha, 0}\frac{\partial L}{\partial y}\left(u(t),{^{C}}\mathbb{D}_{a^+}^{\alpha, 0}u(t),t\right) 
	\end{equation}
	is a constant of motion on $X'$ and $S'$. Note that, $\mathbb{I}_{a^+}^{1-\alpha, 0}$ and $\mathbb{I}_{b^-}^{1-\alpha, 0}$ denote the left and right Riemann-Liouville fractional integrals,
	respectively.

	\item[(iii)]\emph{Link with the classical problem of the calculus of variations}
	
	In case $\alpha=1$ and $\sigma=0$, Problem~\ref{pb} is reduced to the
	classical problem of the calculus of variations,
	\begin{equation}
		\label{eq:pbcv}
		I[(\cdot)] = \int_a^b
		L\left(u(t),\frac{{\rm d}u}{{\rm d}t}(t),t\right) \longrightarrow \min \, ,
	\end{equation}
	and one obtains from Theorem~\ref{CNT} the standard
	Noether's theorem \cite{Noether:1918}:
	\begin{equation}
		\label{eq:classNoetCL}
		C[u]_t=
		\frac{\partial L}{\partial y}\left(u(t),\frac{{\rm d}u}{{\rm d}t}(t),t\right)\frac{\partial \xi}{\partial s}(0,u(t),t)
	\end{equation}
	is a constant of motion, \textrm{i.e.} \eqref{eq:classNoetCL} is
	constant along all  solutions of the Euler-Lagrange equation
	\begin{equation}
		\label{eq:EL}
		\frac{\partial L}{\partial x}\left(u(t),\frac{{\rm d}u}{{\rm d}t}(t),t\right)=\frac{\rm d}{{\rm d}t}\frac{\partial L}{\partial y}\left(u(t),\frac{{\rm d}u}{{\rm d}t}(t),t\right)
	\end{equation}
	(this classical equation is obtained from \eqref{EL}
	putting $\alpha=1$ and $\sigma=0$).
	\item[(iv)] \emph{Coherence's principle}
	
	Finally, we have an important conclusion: the \emph{principle of coherence} is respected,  \textnormal{i.e.}, the following diagram commutes

\begin{equation}\label{diagram}
	\begin{tikzcd}
			\textnormal{Problem~\ref{pb}} \arrow[r, "\sigma=0"] \arrow[d] & \textnormal{Problem~\eqref{eq:pbcv11}} \arrow[r, "\alpha=1"] \arrow[d] \arrow[dr, phantom, ] & \textnormal{Problem~\eqref{eq:pbcv}} \arrow[d] \\
		\textnormal{Theorem~\ref{CNT}}  \arrow[r, "\sigma=0"']          & \textnormal{Constant of motion~\eqref{GM}} \arrow[r, "\alpha=1"']                                         & \textnormal{Constant of motion~\eqref{eq:classNoetCL}}. 
	\end{tikzcd}
\end{equation}

\end{itemize}

\subsection{Illustrative applications}

\subsubsection{Application to Problem~\ref{pb} when the Lagragian $L$ does not depend explicitly on $u$} --- In classical mechanics, when problem \eqref{eq:pbcv} does not
depend explicitly on $u$, \textrm{i.e.}, $L \equiv
L\left(\dot{u}(t),t\right)$, it follows from 
\eqref{eq:EL} that the generalized momentum $p=\frac{\partial L}{\partial y}$ is a constant of motion. This is also an immediate consequence of Noether's theorem
\cite{Noether:1918}: from the invariance with respect to
translations on $u$ $\left( \xi(s,u(t),t)= u(t)+s\right)$, it follows from
\eqref{eq:classNoetCL} that $p$ is a constant of motion.  In this case,  our variational problem reduces to

\begin{equation}
	\label{eq:pbcv2}
	\mathcal{J}[(\cdot)] = \int_a^b
	L\left({^{C}}\mathbb{D}_{a^+}^{\alpha, \sigma}u(t),t\right) \longrightarrow \min .
\end{equation}
Furthermore, in such circumstances, Theorem~\ref{CNT} provides a new interesting insight for problem \eqref{eq:pbcv2}:

\begin{corollary}
\label{cor:FOCP:CV}
For the fractional problem \eqref{eq:pbcv2}, the quantity
\begin{equation}
\lim_{t\to b^-}\left(\frac{\partial L}{\partial y}[u]_t\mathbb{I}_{a^+}^{1-\alpha, \sigma}e^{-\sigma t}\right) -\lim_{t\to a^+}\left( e^{-\sigma t} \mathbb{I}_{b^-}^{1-\alpha, \sigma}\frac{\partial L}{\partial y}[u]_t
\right)\end{equation}
is a constant of motion in the sense of Definition~\ref{lc1}.
\end{corollary}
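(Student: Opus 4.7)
The plan is to apply Theorem~\ref{CNT} to a one-parameter group of diffeomorphisms that plays the role of the translation symmetry $u \mapsto u + s$ used in the classical $\sigma = 0$ setting. Since the Lagrangian in \eqref{eq:pbcv2} depends on $u$ only through ${^C}\mathbb{D}_{a^+}^{\alpha,\sigma}u$, verifying the invariance condition \eqref{invg} of Definition~\ref{def:inv} reduces to exhibiting a family $\xi(s,u(t),t)$ such that ${^C}\mathbb{D}_{a^+}^{\alpha,\sigma}\xi(s,u(t),t) = {^C}\mathbb{D}_{a^+}^{\alpha,\sigma}u(t)$ for every $s \in \R$.

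The key observation is that the tempered Caputo derivative annihilates $e^{-\sigma t}$: from \eqref{T05},
\begin{equation*}
{^C}\mathbb{D}_{a^+}^{\alpha,\sigma}(e^{-\sigma t}) \;=\; e^{-\sigma t}\,{^C_a}D_t^{\alpha}(e^{\sigma t}e^{-\sigma t}) \;=\; e^{-\sigma t}\,{^C_a}D_t^{\alpha}(1) \;=\; 0,
\end{equation*}
because the classical Caputo derivative of a constant vanishes. This suggests the choice
\begin{equation*}
\xi(s,u(t),t) \;:=\; u(t) + s\,e^{-\sigma t},\qquad s \in \R.
\end{equation*}
Conditions (i)--(iv) of Definition~\ref{sg} are immediate from the explicit linear form, so $\{\xi(s,\cdot)\}_{s\in \R}$ is a bona fide one-parameter group of diffeomorphisms. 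Linearity of ${^C}\mathbb{D}_{a^+}^{\alpha,\sigma}$ together with the annihilation property above yields ${^C}\mathbb{D}_{a^+}^{\alpha,\sigma}\xi(s,u(t),t) = {^C}\mathbb{D}_{a^+}^{\alpha,\sigma}u(t)$, so both sides of \eqref{invg} coincide pointwise and $\mathcal{J}$ is invariant.

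The infinitesimal generator is then $\dfrac{\partial \xi}{\partial s}(0,u(t),t) = e^{-\sigma t}$. Substituting this expression into the Noether formula \eqref{NCL} furnished by Theorem~\ref{CNT} produces exactly the quantity in the statement, which is therefore a constant of motion in the sense of Definition~\ref{lc1}. No serious obstacle is anticipated; the only nontrivial step is identifying the correct ``tempered translation'' $u \mapsto u + s\,e^{-\sigma t}$, which in the limit $\sigma \to 0$ collapses to the usual translation symmetry, consistent with the coherence diagram \eqref{diagram} and recovering the classical conservation of generalized momentum.
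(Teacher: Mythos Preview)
Your proposal is correct and follows essentially the same approach as the paper: both choose the tempered translation $\xi(s,u(t),t)=u(t)+s\,e^{-\sigma t}$, verify via \eqref{T05} that ${^{C}}\mathbb{D}_{a^+}^{\alpha,\sigma}\xi(s,u(t),t)={^{C}}\mathbb{D}_{a^+}^{\alpha,\sigma}u(t)$ so that \eqref{invg} holds, compute $\frac{\partial\xi}{\partial s}(0,u(t),t)=e^{-\sigma t}$, and then invoke Theorem~\ref{CNT}. The only cosmetic difference is that the paper carries out the derivative computation directly on $\xi$, whereas you first isolate the annihilation ${^{C}}\mathbb{D}_{a^+}^{\alpha,\sigma}(e^{-\sigma t})=0$ and appeal to linearity.
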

\begin{proof}
Provided that the Lagrangian does not depend explicitly on $u$,  it is easy to check that invariance condition \eqref{invg} is satisfied with $\xi(s,u(t),t)=u(t) + se^{-\sigma t}$. In fact, \eqref{invg} holds trivially proving that ${^{C}}\mathbb{D}_{a^+}^{\alpha, \sigma}\xi\left(s,u(t),t\right)={^{C}}\mathbb{D}_{a^+}^{\alpha, \sigma}u(t)$:
\begin{equation*}
	\begin{aligned}
{^{C}}\mathbb{D}_{a^+}^{\alpha, \sigma}\xi(s,u(t),t) & = \frac{e^{-\sigma t}}
{\Gamma (1-\alpha)} \int_{a}^{t} (t-\theta)^{-\alpha}\frac{{\rm d}}{{\rm d}\theta}\left[e^{\sigma \theta}\xi(s,u(\theta),\theta)\right]{\rm d}\theta\\
& =	\frac{e^{-\sigma t}}
{\Gamma (1-\alpha)} \int_{a}^{t} (t-\theta)^{-\alpha}\frac{{\rm d}}{{\rm d}\theta}\left[e^{\sigma \theta}\left(u(t) + se^{-\sigma \theta}\right)\right]{\rm d}\theta	\\
&=\frac{e^{-\sigma t}}
{\Gamma (1-\alpha)} \int_{a}^{t} (t-\theta)^{-\alpha}\frac{{\rm d}}{{\rm d}\theta}\left[e^{\sigma \theta}u(\theta)\right]{\rm d}\theta\\
&={^{C}}\mathbb{D}_{a^+}^{\alpha, \sigma}u(t).	
\end{aligned}
\end{equation*}
Using the previous notations, one has $\frac{\partial \xi}{\partial s}(0,u(t),t)=e^{-\sigma t}$. Desired conclusion follows from Theorem~\ref{CNT}.
\end{proof}

We can easily check the community of the diagram \eqref{diagram} through the Corollary~\ref{cor:FOCP:CV}, \textnormal{i.e.}, when $\alpha=1$, $\sigma=0$ and  $L \equiv
L\left(\dot{u}(t),t\right)$, the generalized momentum $p=\frac{\partial L}{\partial y}$ is a constant of motion along all the solutions of the Euler-Lagrange equation \eqref{eq:EL}.

\subsubsection{Example}---
In the following example we investigate an existence result of the following linear boundary value problem
\begin{equation}\label{BVP01}
\begin{aligned}
&\mathbb{D}_{b-}^{\alpha, \sigma}(\mathbb{D}_{a^+}^{\alpha, \sigma}u(x)) + u(x)= f(x),\;\;\mbox{a.e. }x\in [a,b]\\
&u(a) = u(b) = 0,
\end{aligned}
\end{equation}
where $\alpha \in (\frac{1}{2}, 1)$ and $f\in L^2[a,b]$. To solve this problem we use the classical Lax-Milgran theorem.

\noindent 
Now, for $\alpha \in (\frac{1}{2}, 1)$ and $\sigma >0$, we claim that problem (\ref{BVP01}) has a solution $u\in \mathbb{H}_{0}^{\alpha, \sigma}(a,b)$. In fact. Let us consider the continuous coercive bilinear form 
$$
\begin{aligned}
a: \mathbb{H}_{0}^{\alpha, \sigma}\times \mathbb{H}_{0}^{\alpha, \sigma}(a,b) &\to \R\\
(u_1, u_2) &\to a(u_1, u_2) = \int_{a}^{b} \mathbb{D}_{a^+}^{\alpha, \sigma}u_1(x) \mathbb{D}_{a^+}^{\alpha, \sigma} u_2(x)dx + \int_{a}^{b}u(x)v(x)dx,
\end{aligned}
$$ 
the linear form 
$$
\begin{aligned}
\phi: \mathbb{H}_{0}^{\alpha, \sigma}(a,b) &\to \R\\
u &\to \phi (u) = \int_{a}^{b} f(x)u(x)dx.
\end{aligned}
$$
Then, by Lax-Milgran theorem there is a unique $u\in \mathbb{H}_{0}^{\alpha, \sigma}(a,b)$ such that 
\begin{equation}\label{BVP02}
\int_{a}^{b} \mathbb{D}_{a^+}^{\alpha, \sigma} u(x) \mathbb{D}_{a^+}^{\alpha, \sigma} v(x)dx + \int_{a}^{b}u(x)v(x)dx = \int_{a}^{b} f(x)v(x)dx
\end{equation}
for any $v\in \mathbb{H}_{0}^{\alpha, \sigma}(a,b)$.

\section{weak solution trough Mountain pass theorem }
In this section we are going to use the well known mountain pass theorem due to Ambrosetti and Rabinowitz \cite{AmRa} to get another type of weak solution for (\ref{EL}). That is, we are going to use the following theorem 
\begin{theorem}\label{MPT}
\cite{AmRa} Let $X$ be a real Banach space and $I\in C^1(X, \R)$ satisfies the following conditions:
\begin{enumerate}
\item $I$ satisfies Palais-Smale condition, that is, if any sequence $(u_n)_{n\in \N} \subset X$, for which $(I(u_n))_{n\in \N}$ is bounded and $I'(u_n) \to 0$ as $n\to \infty$, possesses a convergent subsequence in $X$,
\item $I(0) = 0$,
\item There are $\rho>0$ and $\eta >0$ such that $I\Big|_{\partial B(0, \rho)}\geq \eta$,
\item There is $e\in X$ with $\|e\|_X>\rho$ such that $I(e) <0$.
\end{enumerate}
Then $I$ possesses a critical value $c\geq \eta$ given by 
$$
c = \inf_{g\in \Gamma}\max_{t\in [0,1]}I(g(t)),
$$
where $\Gamma = \{g\in C([0,1], X): g(0) = 0,\;\;g(1) = e\}$.
\end{theorem}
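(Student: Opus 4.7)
The plan is to follow the classical Ambrosetti--Rabinowitz minimax scheme. First, I would verify that the candidate critical value
$$
c=\inf_{g\in\Gamma}\max_{t\in[0,1]}I(g(t))
$$
is well defined and satisfies $c\geq\eta>0$. The key observation is that for every $g\in\Gamma$ the path $t\mapsto\|g(t)\|_X$ is a continuous map from $[0,1]$ to $\R$ taking the values $0$ and $\|e\|_X>\rho$; by the intermediate value theorem there exists $t^\ast\in(0,1)$ with $\|g(t^\ast)\|_X=\rho$, so condition~(3) gives $\max_{t}I(g(t))\geq I(g(t^\ast))\geq\eta$. Taking the infimum over $g$ yields $c\geq\eta$. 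Finiteness of $c$ follows from choosing, for example, the straight-line path $g_0(t)=te$ and using continuity of $I$ on the compact set $g_0([0,1])$.

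Next I would construct a Palais--Smale sequence at level $c$. The cleanest route is via the quantitative deformation lemma: arguing by contradiction, assume there is no sequence $(u_n)$ with $I(u_n)\to c$ and $I'(u_n)\to 0$. Then for some $\varepsilon,\delta>0$ one has $\|I'(u)\|_{X^\ast}\geq\delta$ on the strip
$$
S_\varepsilon:=\{u\in X:\ |I(u)-c|\leq 2\varepsilon\}.
$$
Using a locally Lipschitz pseudo-gradient vector field on the regular set $\{I'\neq 0\}$, one constructs a deformation $\eta:[0,1]\times X\to X$ with the properties: $\eta(s,\cdot)$ is a homeomorphism, $\eta(s,u)=u$ whenever $|I(u)-c|\geq 2\varepsilon$, and $I(\eta(1,u))\leq c-\varepsilon$ for every $u$ with $I(u)\leq c+\varepsilon$. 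Pick $g\in\Gamma$ with $\max_tI(g(t))\leq c+\varepsilon$; since $I(0)=0<c-\varepsilon$ and (for $\varepsilon$ small) $I(e)<0<c-\varepsilon$, the deformation leaves the endpoints $g(0)=0$ and $g(1)=e$ fixed, so $\tilde g:=\eta(1,\cdot)\circ g\in\Gamma$. But then $\max_tI(\tilde g(t))\leq c-\varepsilon$, contradicting the definition of $c$. Hence a Palais--Smale sequence at level $c$ exists.

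Finally I would apply condition~(1): the Palais--Smale sequence $(u_n)$ just produced admits a subsequence $u_{n_k}\to u^\ast$ in $X$. Continuity of $I$ and $I'$ yields $I(u^\ast)=c\geq\eta$ and $I'(u^\ast)=0$, so $u^\ast$ is the desired critical point. The main obstacle is the second step, namely the construction of the deformation $\eta$; it rests on the existence of a locally Lipschitz pseudo-gradient vector field for $I$ (standard for $C^1$ functionals on Banach spaces, since $X^\ast$ need not admit a smooth norm and one cannot simply use $-I'$). Once one accepts this technical device, the geometry of conditions~(2)--(4) does all the work and the Palais--Smale condition~(1) is used only at the very end to pass from an ``almost critical'' sequence to a genuine critical point.
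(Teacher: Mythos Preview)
Your sketch is a correct outline of the classical Ambrosetti--Rabinowitz argument: establish $c\geq\eta$ by the intermediate value theorem, produce a Palais--Smale sequence at level $c$ via the quantitative deformation lemma built on a locally Lipschitz pseudo-gradient field, and then invoke condition~(1) to extract a genuine critical point. The logic is sound and the identification of the deformation step as the technical heart is accurate.

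However, there is nothing to compare against: the paper does \emph{not} supply its own proof of Theorem~\ref{MPT}. The result is simply quoted from \cite{AmRa} (note the citation embedded in the theorem header) and used as a black box later in Section~5 to conclude the existence of a mountain-pass critical point of~$\mathcal{J}$. So while your proof plan is perfectly reasonable as an exposition of the classical result, it goes beyond what the paper itself does; the authors treat this theorem as background and devote their effort instead to verifying that~$\mathcal{J}$ satisfies hypotheses~(1)--(4) (Theorem~\ref{Dife2}, Lemmas~\ref{Mlm2}, \ref{Mlm4}, \ref{Mlm3}).
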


In order to apply Theorem \ref{MPT}, we suppose that the Lagrangian $L: \R\times \R \times [a,b]\to \R$ is of $C^1$ class and satisfies the following condition 
\begin{enumerate}
\item[$(M_1)$] $L(x,\cdot, t)$ is convex for all $(x,t)\in \R\times[a,b]$.
\item[$(M_2)$] There are $\rho\in C(\R^+, \R^+)$ and $\vartheta\in L^1([a,b], \R^+)$ such that for all $(x,y,t)\in \R\times \R \times [a,b]$
$$
\begin{aligned}
&|L(x,y,t)| \leq \rho(|x|)(\vartheta(t) + |y|^2),\\
&\left|\frac{\partial L}{\partial x}(x,y,t)\right|\leq \rho(|x|)(\vartheta(t) + |y|^2),\;\;\mbox{and}\\
&\left|\frac{\partial L}{\partial y}(x,y,t) \right|^2 \leq \rho(|x|)(\vartheta(t) + |y|^2)
\end{aligned}
$$
\item[$(M_3)$] There is $\mu_L\in (0,2)$ such that $\forall (x,y,t)\in \R\times \R\times[a,b]$
$$
L_x(x,y,t) x + L_y(x,y,t)y\leq \mu_LL(x,y,t)
$$
\item[$(M_4)$] There is $\Lambda>0$ such that for all $(x,y,t)\in \R\times \R \times[a,b]$ 
$$
L(x,y,t) \geq \Lambda |y|^2.
$$
\item[$(M_5)$] $L(x,0,t) = 0$ for all $(x,t)\in \R\times [a,b]$.
\end{enumerate}

As a consequence of condition $(M_3)$ we have the following result. 
\begin{lemma}\label{Mlm1}
Suppose that $(M_3)$ holds. Then, for every $\lambda >1$
$$
L(\lambda x, \lambda y, t) \leq \lambda^{\mu_L}L(x,y,t)\quad \mbox{for all $(x,y,t) \in \R\times \R \times[a,b]$}.
$$
\end{lemma}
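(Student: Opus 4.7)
The plan is to convert the pointwise differential inequality in $(M_3)$ into an integrated scaling inequality by looking at the one-parameter family $\lambda \mapsto L(\lambda x, \lambda y, t)$ and finding an explicit non-increasing combination of it.

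First, I would fix $(x,y,t) \in \R \times \R \times [a,b]$ and define, for $\lambda > 0$,
$$g(\lambda) := L(\lambda x, \lambda y, t).$$
Since $L$ is of class $C^1$, the chain rule gives
$$g'(\lambda) = L_x(\lambda x, \lambda y, t)\, x + L_y(\lambda x, \lambda y, t)\, y.$$
Applying hypothesis $(M_3)$ at the point $(\lambda x, \lambda y, t) \in \R \times \R \times [a,b]$ yields
$$L_x(\lambda x, \lambda y, t)(\lambda x) + L_y(\lambda x, \lambda y, t)(\lambda y) \le \mu_L\, L(\lambda x, \lambda y, t),$$
which, after factoring $\lambda$, reads exactly
$$\lambda\, g'(\lambda) \le \mu_L\, g(\lambda), \qquad \lambda > 0.$$

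Next, I would introduce the auxiliary function
$$h(\lambda) := \lambda^{-\mu_L}\, g(\lambda), \qquad \lambda > 0,$$
and compute
$$h'(\lambda) = \lambda^{-\mu_L - 1}\bigl(\lambda\, g'(\lambda) - \mu_L\, g(\lambda)\bigr) \le 0.$$
Thus $h$ is non-increasing on $(0,\infty)$; in particular, for every $\lambda > 1$ we have $h(\lambda) \le h(1)$, that is,
$$\lambda^{-\mu_L}\, L(\lambda x, \lambda y, t) \le L(x,y,t),$$
which, after multiplying by $\lambda^{\mu_L} > 0$, gives the claimed inequality $L(\lambda x, \lambda y, t) \le \lambda^{\mu_L} L(x,y,t)$. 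Since $(x,y,t)$ was arbitrary, the proof is complete.

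There is no serious obstacle here; the only conceptual point is recognising that $(M_3)$ is a homogeneous Euler-type inequality whose natural integrating factor is $\lambda^{-\mu_L}$. Note in particular that the argument makes no sign assumption on $L$, so no extra work is needed to handle the case $L(x,y,t) < 0$.
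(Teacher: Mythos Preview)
Your argument is correct and is exactly the standard Euler-type scaling proof one would expect here. The paper, however, states this lemma without proof, so there is nothing to compare against; your write-up would serve as a complete and self-contained proof of the result.
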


\noindent 
In order to apply Theorem \ref{MPT} we are going to show that $\mathcal{J}\in C^1(\mathbb{H}_{0}^{\alpha, \sigma}(a,b), \R)$.
\begin{lemma}\label{Dife1}
If $u\in \mathbb{H}_{0}^{\alpha, \sigma}(a,b)$, then $\frac{\partial L}{\partial x}(x,y,\cdot) \in L^1$ and $\frac{\partial L}{\partial y}(x,y, \cdot)\in L^2$
\end{lemma}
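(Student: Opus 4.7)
The plan is to apply condition $(M_2)$ pointwise and then integrate, with the continuous embedding $\mathbb{H}_{0}^{\alpha,\sigma}(a,b)\hookrightarrow C\overline{(a,b)}$ (Theorem~\ref{embedding}) providing the necessary $L^\infty$ control on $u$. The statement should of course be read as saying that $\frac{\partial L}{\partial x}(u,{^C}\mathbb{D}_{a^+}^{\alpha,\sigma}u,\cdot)\in L^1$ and $\frac{\partial L}{\partial y}(u,{^C}\mathbb{D}_{a^+}^{\alpha,\sigma}u,\cdot)\in L^2$ for any $u\in\mathbb{H}_{0}^{\alpha,\sigma}(a,b)$.

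First I would fix $u\in\mathbb{H}_{0}^{\alpha,\sigma}(a,b)$. By Theorem~\ref{embedding}, $u\in C\overline{(a,b)}$ and $\|u\|_\infty<\infty$, so by continuity of $\rho$ the constant
\[
M:=\max_{t\in[a,b]}\rho(|u(t)|)
\]
is finite. By the definition of the norm in $\mathbb{H}_{0}^{\alpha,\sigma}(a,b)$ we have ${^C}\mathbb{D}_{a^+}^{\alpha,\sigma}u\in L^2(a,b)$, hence $|{^C}\mathbb{D}_{a^+}^{\alpha,\sigma}u|^2\in L^1(a,b)$; moreover $\vartheta\in L^1(a,b)$ by hypothesis $(M_2)$.

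Applying $(M_2)$ with $(x,y,t)=(u(t),{^C}\mathbb{D}_{a^+}^{\alpha,\sigma}u(t),t)$ yields, for a.e.\ $t\in[a,b]$,
\[
\left|\frac{\partial L}{\partial x}\bigl(u(t),{^C}\mathbb{D}_{a^+}^{\alpha,\sigma}u(t),t\bigr)\right|\leq M\bigl(\vartheta(t)+|{^C}\mathbb{D}_{a^+}^{\alpha,\sigma}u(t)|^2\bigr),
\]
and the right-hand side lies in $L^1(a,b)$; integrating gives $\frac{\partial L}{\partial x}(u,{^C}\mathbb{D}_{a^+}^{\alpha,\sigma}u,\cdot)\in L^1(a,b)$. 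Similarly,
\[
\left|\frac{\partial L}{\partial y}\bigl(u(t),{^C}\mathbb{D}_{a^+}^{\alpha,\sigma}u(t),t\bigr)\right|^2\leq M\bigl(\vartheta(t)+|{^C}\mathbb{D}_{a^+}^{\alpha,\sigma}u(t)|^2\bigr)\in L^1(a,b),
\]
so $\frac{\partial L}{\partial y}(u,{^C}\mathbb{D}_{a^+}^{\alpha,\sigma}u,\cdot)\in L^2(a,b)$.

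There is no real obstacle here: the argument is entirely pointwise followed by integration, and the only non-trivial ingredient is the continuous embedding from Theorem~\ref{embedding}, which is what makes $\rho(|u(\cdot)|)$ bounded. This is essentially the analogue for the mountain-pass hypotheses $(M_1)$--$(M_5)$ of Lemma~\ref{ELlm1}, where the same conclusion was obtained under the growth conditions $(L_2)$--$(L_3)$ of Theorem~\ref{main1}; the only change is that the $|y|^{d_2}$ and $|y|^{d_3}$ growth in $(L_2)$--$(L_3)$ is now a uniform $|y|^2$ growth in $(M_2)$, which is actually simpler to handle because both claims reduce to the single observation $|{^C}\mathbb{D}_{a^+}^{\alpha,\sigma}u|^2\in L^1$.
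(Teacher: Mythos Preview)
Your proof is correct and follows essentially the same approach as the paper: both use the continuous embedding $\mathbb{H}_{0}^{\alpha,\sigma}(a,b)\hookrightarrow C\overline{(a,b)}$ to bound $\rho(|u(\cdot)|)$ uniformly, then apply $(M_2)$ pointwise and integrate. The only cosmetic difference is that the paper packages the bound via the non-decreasing envelope $\xi(s)=\sup_{\tau\in[0,s]}\rho(\tau)$ (which it reuses in later arguments), whereas you simply take $M=\max_{t\in[a,b]}\rho(|u(t)|)$ directly; for the purposes of this lemma the two are equivalent.
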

\begin{proof}
Define the non-decreasing function 
$$
\xi (s) = \sup_{\tau \in [0,s]}\rho(\tau).
$$ 
Then, for any $u\in \mathbb{H}_{0}^{\alpha, \sigma}(a,b)$ and Theorem \ref{embedding} we have 
\begin{equation}\label{dif01}
\rho(|u(t)|) \leq \xi (\|u\|_\infty) \leq \xi \left(\frac{\sqrt{\gamma (2\alpha-1, 2\sigma (b-a))}}{(2\sigma)^{\alpha-\frac{1}{2}}\Gamma (\alpha)}\|u\| \right).
\end{equation}
Next, by $(M_2)$
$$
\begin{aligned}
\int_{a}^{b}\left|\frac{\partial L}{\partial x}(u, {^{C}}\mathbb{D}_{a^+}^{\alpha, \sigma}, t) \right|dt 
&\leq \xi \left(\frac{\sqrt{\gamma (2\alpha-1. 2\sigma(b-a))}}{(2\sigma)^{\alpha-\frac{1}{2}}\Gamma (\alpha)} \right) \int_{a}^{b}\Big(\vartheta(t) + |{^{C}}\mathbb{D}_{a^+}^{\alpha, \sigma}u(t)|^2 \Big)dt < \infty.
\end{aligned}
$$
and 
$$
\int_{a}^{b}\left|\frac{\partial L}{\partial y}(u, {^{C}}\mathbb{D}_{a^+}^{\alpha, \sigma} u, t) \right|^2dt \leq \xi \left(\frac{\sqrt{\gamma (2\alpha-1, 2\sigma(b-a))}}{(2\sigma)^{\alpha-\frac{1}{2}}\Gamma (\alpha)} \right) \int_{a}^{b}\Big(\vartheta(t) + |{^{C}}\mathbb{D}_{a^+}^{\alpha, \sigma}u(t)|^2 \Big)dt < \infty.
$$
\end{proof}

\begin{theorem}\label{Dife2}
$\mathcal{J} \in C^1(\mathbb{H}_{0}^{\alpha, \sigma}(a,b), \R)$. Moreover 
$$
\mathcal{J}'(u)\varphi = \int_{a}^{b} \left( \frac{\partial L}{\partial x}(u, {^{C}}\mathbb{D}_{a^+}^{\alpha, \sigma}u, t)\varphi + \frac{\partial L}{\partial y}(u, {^{C}}\mathbb{D}_{a^+}^{\alpha, \sigma} u, t){^{C}}\mathbb{D}_{a^+}^{\alpha, \sigma} \varphi \right)dt
$$
\end{theorem}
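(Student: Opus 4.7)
The plan has three steps: establish the Gâteaux differentiability and the formula, check that $\mathcal{J}'(u)$ belongs to the dual of $\mathbb{H}_{0}^{\alpha,\sigma}(a,b)$, and finally prove the continuity of $u\mapsto \mathcal{J}'(u)$.

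\textbf{Step 1 (Gâteaux derivative).} I would repeat the differentiation-under-the-integral argument of Proposition~\ref{ELprop1}, but this time invoking $(M_2)$ instead of $(L_2)$, $(L_3)$. Fix $u,\varphi\in\mathbb{H}_{0}^{\alpha,\sigma}(a,b)$ and set
$$
\psi(t,h)=L\!\left(u(t)+h\varphi(t),\,{^{C}}\mathbb{D}_{a^+}^{\alpha,\sigma}u(t)+h\,{^{C}}\mathbb{D}_{a^+}^{\alpha,\sigma}\varphi(t),\,t\right),\qquad h\in[-1,1].
$$
Since $L\in C^{1}$, $\psi(t,\cdot)$ is differentiable; by Theorem~\ref{embedding} the family $\{u+h\varphi:|h|\le 1\}$ is uniformly bounded in $C\overline{(a,b)}$, so with $\xi$ as in Lemma~\ref{Dife1} we obtain the uniform bound $\rho(|u(t)+h\varphi(t)|)\le\xi(\|u\|_\infty+\|\varphi\|_\infty)$. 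Combining this with $(M_2)$ and the trivial inequality $|a+b|^{2}\le 2(|a|^{2}+|b|^{2})$ furnishes an $h$-independent $L^{1}$-majorant of $|\partial_h\psi(t,h)|$. Leibniz's rule (as cited in the proof of Proposition~\ref{ELprop1}) then gives the announced formula for $D\mathcal{J}(u)\varphi=\mathcal{J}'(u)\varphi$.

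\textbf{Step 2 (Linearity and continuity in $\varphi$).} Linearity in $\varphi$ is immediate from the integral representation. For boundedness, Hölder's inequality yields
$$
|\mathcal{J}'(u)\varphi|\le \|\varphi\|_\infty\!\left\|\tfrac{\partial L}{\partial x}(u,{^{C}}\mathbb{D}_{a^+}^{\alpha,\sigma}u,\cdot)\right\|_{L^{1}}+\left\|\tfrac{\partial L}{\partial y}(u,{^{C}}\mathbb{D}_{a^+}^{\alpha,\sigma}u,\cdot)\right\|_{L^{2}}\!\|{^{C}}\mathbb{D}_{a^+}^{\alpha,\sigma}\varphi\|_{L^{2}},
$$
and both $L^{1}$ and $L^{2}$ norms on the right are finite by Lemma~\ref{Dife1}. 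Together with the embedding constant in \eqref{conti} this shows $\mathcal{J}'(u)\in(\mathbb{H}_{0}^{\alpha,\sigma}(a,b))^{\ast}$.

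\textbf{Step 3 (Continuity of $u\mapsto\mathcal{J}'(u)$).} This is the real work. Take $u_n\to u$ in $\mathbb{H}_{0}^{\alpha,\sigma}(a,b)$. By the compact embedding of Theorem~\ref{compact}, $u_n\to u$ uniformly on $\overline{(a,b)}$; by the definition of the norm, ${^{C}}\mathbb{D}_{a^+}^{\alpha,\sigma}u_n\to{^{C}}\mathbb{D}_{a^+}^{\alpha,\sigma}u$ in $L^{2}(a,b)$. Passing to a subsequence (renamed $u_n$) I may further assume pointwise a.e.\ convergence and domination ${^{C}}\mathbb{D}_{a^+}^{\alpha,\sigma}u_n(t)|\le g(t)$ for some $g\in L^{2}(a,b)$. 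Continuity of $\partial_x L$ and $\partial_y L$ then gives the pointwise convergences
$$
\tfrac{\partial L}{\partial x}(u_n,{^{C}}\mathbb{D}_{a^+}^{\alpha,\sigma}u_n,t)\longrightarrow\tfrac{\partial L}{\partial x}(u,{^{C}}\mathbb{D}_{a^+}^{\alpha,\sigma}u,t),\qquad \tfrac{\partial L}{\partial y}(u_n,{^{C}}\mathbb{D}_{a^+}^{\alpha,\sigma}u_n,t)\longrightarrow\tfrac{\partial L}{\partial y}(u,{^{C}}\mathbb{D}_{a^+}^{\alpha,\sigma}u,t)
$$
for a.e.\ $t\in[a,b]$. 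The uniform boundedness of $(u_n)$ in $C\overline{(a,b)}$ bounds $\rho(|u_n(t)|)$ by a single constant, and $(M_{2})$ then produces the integrable (resp.\ square-integrable) dominant $C(\vartheta(t)+g(t)^{2})$ (resp.\ $C(\vartheta(t)+g(t)^{2})^{1/2}$). The dominated convergence theorem yields
$$
\tfrac{\partial L}{\partial x}(u_n,{^{C}}\mathbb{D}_{a^+}^{\alpha,\sigma}u_n,\cdot)\to \tfrac{\partial L}{\partial x}(u,{^{C}}\mathbb{D}_{a^+}^{\alpha,\sigma}u,\cdot) \text{ in } L^{1},\quad \tfrac{\partial L}{\partial y}(u_n,{^{C}}\mathbb{D}_{a^+}^{\alpha,\sigma}u_n,\cdot)\to \tfrac{\partial L}{\partial y}(u,{^{C}}\mathbb{D}_{a^+}^{\alpha,\sigma}u,\cdot) \text{ in } L^{2}.
$$
Plugging these into the formula of Step~1 and estimating as in Step~2 gives $\|\mathcal{J}'(u_n)-\mathcal{J}'(u)\|_{\ast}\to 0$ along the subsequence. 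Since every subsequence of the original $(u_n)$ has a sub-subsequence converging in the dual to the same limit $\mathcal{J}'(u)$, the full sequence converges and $\mathcal{J}'$ is continuous.

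The main obstacle is Step~3, specifically the construction of the $L^{2}$-dominant $g$ for the sequence $({^{C}}\mathbb{D}_{a^+}^{\alpha,\sigma}u_n)$ and of uniform majorants for the partial derivatives; once this is set up, the dominated convergence theorem and Hölder's inequality close the argument routinely.
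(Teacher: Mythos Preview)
Your proposal is correct and follows essentially the same route as the paper: differentiation under the integral sign using $(M_2)$ and Lemma~\ref{Dife1} for Steps~1--2, and dominated convergence after extracting a pointwise-a.e.\ convergent subsequence with an $L^2$ majorant for Step~3. Your Step~3 is in fact slightly sharper than the paper's, which only records the convergence $\mathcal{J}'(u_n)\varphi\to\mathcal{J}'(u)\varphi$ for each fixed $\varphi$ and omits the sub-subsequence argument; your $L^1$/$L^2$ convergence of the partial derivatives yields the required operator-norm convergence directly.
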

\begin{proof}
$(M_2)$ and (\ref{dif01}) yield that 
$$
|\mathcal{J}(u)| 
\leq \xi \left(\frac{\sqrt{\gamma (2\alpha - 1, 2\sigma (b-a))}}{(2\sigma)^{\alpha-\frac{1}{2}}\Gamma (\alpha)} \right) \int_{a}^{b}\Big(\vartheta(t) + |{^{C}}\mathbb{D}_{a^+}^{\alpha, \sigma}u|^2 \Big)dt <\infty.
$$
For $u\in \mathbb{H}_{0}^{\alpha, \sigma}(a,b)$, $\varphi \in \mathbb{H}_{0}^{\alpha, \sigma}(a,b)\setminus \{0\}$ and $t\in (a,b)$, $s\in [-1,1]$, let us define  
$$
H(s,t) := L\Big(u+s\varphi, {^{C}}\mathbb{D}_{a^+}^{\alpha, \sigma}u + s{^{C}}\mathbb{D}_{a^+}^{\alpha, \sigma} \varphi, t\Big).
$$
Hence, by $(M_2)$ we obtain 
$$
\begin{aligned}
&\int_{a}^{b}\left|\frac{\partial L}{\partial x} (u, {^{C}}\mathbb{D}_{a^+}^{\alpha, \sigma}u, t) \varphi \right|dt \leq \int_{a}^{b} \rho(|u+s\varphi|)(\vartheta(t) + |{^{C}}\mathbb{D}_{a^+}^{\alpha, \sigma}u|^2)|\varphi|dt\\
&\leq \xi \left(\frac{\sqrt{\gamma (2\alpha-1, 2\sigma (b-a))}}{(2\sigma)^{\alpha-\frac{1}{2}}\Gamma (\alpha)}\|u+s\varphi\| \right)\int_{a}^{b} \Big(\vartheta(t) + |{^{C}}\mathbb{D}_{a^+}^{\alpha, \sigma} u(t)|^2 \Big)|\varphi|dt < \infty.
\end{aligned}
$$
By H\"older inequality and Lemma \ref{Dife1} we obtain  
$$
\begin{aligned}
\int_{a}^{b} \left|\frac{\partial L}{\partial y}(u, {^{C}}\mathbb{D}_{a^+}^{\alpha, \sigma}u, t){^{C}}\mathbb{D}_{a^+}^{\alpha, \sigma} \varphi \right|dt &\leq \int_{a}^{b} &\leq \left(\int_{a}^{b}\left|\frac{\partial L}{\partial y}(u, {^{C}}\mathbb{D}_{a^+}^{\alpha, \sigma}u, t) \right|^2dt \right)^{1/2}\left(\int_{a}^{b}|{^{C}}\mathbb{D}_{a^+}^{\alpha, \sigma}\varphi|^2dt \right)^{1/2}  < \infty,
\end{aligned}
$$
and then 
$$
\begin{aligned}
&\int_{a}^{b}\left|\frac{\partial H}{\partial s}(s,t) \right|dt \\
&= \int_{a}^{b}\left| \frac{\partial L}{\partial x}\Big(u + s\varphi, {^{C}}\mathbb{D}_{a^+}^{\alpha, \sigma}u + s {^{C}}\mathbb{D}_{a^+}^{\alpha, \sigma}\varphi, t\Big)\varphi + \frac{\partial L}{\partial y}\Big(u+s\varphi, {^{C}}\mathbb{D}_{a^+}^{\alpha, \sigma}u + s{^{C}}\mathbb{D}_{a^+}^{\alpha, \sigma}\varphi, t\Big){^{C}}\mathbb{D}_{a^+}^{\alpha, \sigma}\varphi\right|dt < \infty.
\end{aligned}
$$ 
Consequently, $\mathcal{J}$ has a directional derivative with
$$
\mathcal{J}'(u)\varphi = \frac{d}{ds}\mathcal{J}(u+s\varphi)\Big|_{s=0} = \int_{a}^{b} \left(\frac{\partial L}{\partial x}(u,{^{C}}\mathbb{D}_{a^+}^{\alpha, \sigma}u, t)\varphi + \frac{\partial L}{\partial y}(u, {^{C}}\mathbb{D}_{a^+}^{\alpha, \sigma}u, t){^{C}}\mathbb{D}_{a^+}^{\alpha, \sigma} \varphi \right)dt
$$ 
Lemma \ref{Dife1}, H\"older inequality and (\ref{conti}) yield that 
$$
\begin{aligned}
|\mathcal{J}'(u)\varphi| &\leq \left\|\frac{\partial L}{\partial x}(u,{^{C}}\mathbb{D}_{a^+}^{\alpha, \sigma}u, \cdot) \right\|_{L^1}\|\varphi\|_\infty + \left\| \frac{\partial L}{\partial y}(u, {^{C}}\mathbb{D}_{a^+}^{\alpha, \sigma}u, \cdot) \right\|_{L^2}\|{^{C}}\mathbb{D}_{a^+}^{\alpha, \sigma} \varphi\|_{L^2}\\
&\leq \left( \frac{\sqrt{\gamma (2\alpha-1, 2\sigma (b-a))}}{(2\sigma)^{\alpha-\frac{1}{2}}\Gamma (\alpha)}\left\|\frac{\partial L}{\partial x}(u, {^{C}}\mathbb{D}_{a^+}^{\alpha, \sigma}u, \cdot) \right\|_{L^1} + \left\| \frac{\partial L}{\partial y}(u, {^{C}}\mathbb{D}_{a^+}^{\alpha, \sigma}u, \cdot)\right\|_{L^2} \right)\|\varphi\|.
\end{aligned}
$$
Now we claim that $\mathcal{J'}$ is continuous. In fact, let $u_n \to u$ in $\mathbb{H}_{0}^{\alpha, \sigma}(a,b)$. So
$$
u_n \to u\;\;\mbox{in $L^2(a,b)$},\quad {^{C}}\mathbb{D}_{a^+}^{\alpha, \sigma}u_n\to {^{C}}\mathbb{D}_{a^+}^{\alpha, \sigma}u \;\;\mbox{in $L^2(a,b)$} 
$$
and there is $M>0$ such that 
\begin{equation}\label{bb}
\|u_n\| < M.
\end{equation}
Hence, up to a subsequence there is $h\in L^1(a,b)$ such that 
$$
{^{C}}\mathbb{D}_{a^+}^{\alpha, \sigma} u_n(t) \to {^{C}}\mathbb{D}_{a^+}^{\alpha, \sigma} u(t)\;\;\mbox{a.e. and}\quad |{^{C}}\mathbb{D}_{a^+}^{\alpha, \sigma}u_n|^2\leq h. 
$$
So, by $(M_2)$ and (\ref{bb}) we obtain  
$$
\left|\frac{\partial L}{\partial x}(u_n, {^{C}}\mathbb{D}_{a^+}^{\alpha, \sigma}u_n, t) \right| \leq \xi (\|u_n\|)(\vartheta(t) + |{^{C}}\mathbb{D}_{a^+}^{\alpha, \sigma}u_n|^2) \leq \xi (M)(\vartheta(t) + h(t)).
$$
Moreover, since $L\in C^1$ we have 
$$
\frac{\partial L}{\partial x}(u_n(t), {^{C}}\mathbb{D}_{a^+}^{\alpha, \sigma} u_n(t), t) \to \frac{\partial L}{\partial x} (u(t), {^{C}}\mathbb{D}_{a^+}^{\alpha, \sigma}u(t), t)\;\;\mbox{for a.e. $t\in (a,b)$}.
$$
Applying Lebesgue's dominated convergence theorem we obtain 
$$
\int_{a}^{b} \frac{\partial L}{\partial x}(u_n, {^{C}}\mathbb{D}_{a^+}^{\alpha, \sigma}u_n, t)\varphi dt \to \int_{a}^{b} \frac{\partial L}{\partial x}(u, {^{C}}\mathbb{D}_{a^+}^{\alpha, \sigma}u, t)\varphi dt.
$$
In the same way we can show that 
$$
\int_{a}^{b} \frac{\partial L}{\partial y}(u_n, {^{C}}\mathbb{D}_{a^+}^{\alpha, \sigma} u_n, t){^{C}}\mathbb{D}_{a^+}^{\alpha, \sigma}\varphi dt \to \int_{a}^{b}\frac{\partial L}{\partial y}(u, {^{C}}\mathbb{D}_{a^+}^{\alpha, \sigma} u ,t) {^{C}}\mathbb{D}_{a^+}^{\alpha, \sigma}\varphi dt.
$$
The proof is finished.
\end{proof}

Now, by following some ideas found in \cite{chma}, we are going to verify the Palais-Smale condition
\begin{lemma}\label{Mlm2}
The functional $\mathcal{J}$ satisfies the Palais-Smale condition
\end{lemma}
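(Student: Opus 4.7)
The plan is to verify the Palais--Smale condition in four main steps. Let $(u_n)_{n\in\mathbb{N}} \subset \mathbb{H}_{0}^{\alpha, \sigma}(a,b)$ satisfy $|\mathcal{J}(u_n)| \leq M$ and $\mathcal{J}'(u_n) \to 0$ in the dual of $\mathbb{H}_{0}^{\alpha, \sigma}(a,b)$.

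First I would establish boundedness of $(u_n)$. Condition $(M_4)$ gives
\[
\Lambda \|{^{C}}\mathbb{D}_{a^+}^{\alpha, \sigma}u_n\|_{L^2}^2 \leq \int_{a}^{b} L(u_n, {^{C}}\mathbb{D}_{a^+}^{\alpha, \sigma}u_n, t)\,dt = \mathcal{J}(u_n) \leq M,
\]
and Lemma~\ref{Blm01} combined with the $L^2$-boundedness of the fractional integral from Theorem~\ref{Lp} bounds $\|u_n\|_{L^2}$. Hence $\|u_n\|_{\alpha, \sigma}$ is bounded. Since $\mathbb{H}_{0}^{\alpha, \sigma}(a,b)$ is a reflexive Hilbert space, along a subsequence $u_n \rightharpoonup u$ weakly, and by the compact embedding of Theorem~\ref{compact}, $u_n \to u$ strongly in $C(\overline{(a,b)})$ and hence in $L^2(a,b)$. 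It therefore suffices to prove ${^{C}}\mathbb{D}_{a^+}^{\alpha, \sigma}u_n \to {^{C}}\mathbb{D}_{a^+}^{\alpha, \sigma}u$ strongly in $L^2(a,b)$.

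Next, I would extract a monotonicity integral from the PS hypothesis. Since $\mathcal{J}'(u_n)\to 0$ and $(u_n - u)$ is bounded, $\mathcal{J}'(u_n)(u_n - u) \to 0$, i.e.
\[
\int_{a}^{b} \frac{\partial L}{\partial x}(u_n, {^{C}}\mathbb{D}_{a^+}^{\alpha, \sigma}u_n, t)(u_n - u)\,dt + \int_{a}^{b} \frac{\partial L}{\partial y}(u_n, {^{C}}\mathbb{D}_{a^+}^{\alpha, \sigma}u_n, t)\,{^{C}}\mathbb{D}_{a^+}^{\alpha, \sigma}(u_n - u)\,dt \to 0.
\]
Using $(M_2)$ and Lemma~\ref{Dife1}, the first integral is bounded by $\|u_n-u\|_\infty\,\|\partial_x L(u_n,{^{C}}\mathbb{D}_{a^+}^{\alpha,\sigma}u_n,\cdot)\|_{L^1}$, which tends to $0$ by the uniform convergence $u_n\to u$. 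In parallel, $(M_2)$ together with $u_n\to u$ in $C$ and dominated convergence give $\frac{\partial L}{\partial y}(u_n, {^{C}}\mathbb{D}_{a^+}^{\alpha, \sigma}u, \cdot) \to \frac{\partial L}{\partial y}(u, {^{C}}\mathbb{D}_{a^+}^{\alpha, \sigma}u, \cdot)$ strongly in $L^2$, while ${^{C}}\mathbb{D}_{a^+}^{\alpha, \sigma}(u_n - u) \rightharpoonup 0$ in $L^2$; subtracting the resulting vanishing cross term yields
\[
\int_{a}^{b}\left[\frac{\partial L}{\partial y}(u_n, {^{C}}\mathbb{D}_{a^+}^{\alpha, \sigma}u_n, t) - \frac{\partial L}{\partial y}(u_n, {^{C}}\mathbb{D}_{a^+}^{\alpha, \sigma}u, t)\right]{^{C}}\mathbb{D}_{a^+}^{\alpha, \sigma}(u_n - u)\,dt \to 0,
\]
with pointwise nonnegative integrand by the convexity in $y$ from $(M_1)$.

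The hard part is to convert the vanishing of this nonnegative monotonicity integral into strong $L^2$-convergence of the fractional derivatives. My plan is to first invoke the convexity inequality
\[
L(u_n, {^{C}}\mathbb{D}_{a^+}^{\alpha, \sigma}u_n, t) \leq L(u_n, {^{C}}\mathbb{D}_{a^+}^{\alpha, \sigma}u, t) + \frac{\partial L}{\partial y}(u_n, {^{C}}\mathbb{D}_{a^+}^{\alpha, \sigma}u_n, t)\,{^{C}}\mathbb{D}_{a^+}^{\alpha, \sigma}(u_n - u),
\]
which, after integration and the dominated-convergence arguments above, yields $\limsup_n \mathcal{J}(u_n) \leq \mathcal{J}(u)$; combined with weak lower semicontinuity of $\mathcal{J}$ (from $(M_1)$ and continuity of $L$ in $x$) this forces $\mathcal{J}(u_n) \to \mathcal{J}(u)$. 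Passing to a further subsequence, the a.e.\ vanishing of the monotonicity integrand together with the strict convexity induced by $(M_4)$--$(M_5)$ near zero then yields ${^{C}}\mathbb{D}_{a^+}^{\alpha, \sigma}u_n(t) \to {^{C}}\mathbb{D}_{a^+}^{\alpha, \sigma}u(t)$ almost everywhere. A final Fatou/Vitali argument applied to the nonnegative integrand $L(u_n, {^{C}}\mathbb{D}_{a^+}^{\alpha, \sigma}u_n, t) - \Lambda |{^{C}}\mathbb{D}_{a^+}^{\alpha, \sigma}u_n|^2$ upgrades $\mathcal{J}(u_n) \to \mathcal{J}(u)$ to $\|{^{C}}\mathbb{D}_{a^+}^{\alpha, \sigma}u_n\|_{L^2} \to \|{^{C}}\mathbb{D}_{a^+}^{\alpha, \sigma}u\|_{L^2}$, and uniform convexity of $L^2$ (the Radon--Riesz property) converts weak plus norm convergence into strong convergence, completing the proof.
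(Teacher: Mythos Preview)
Your route parallels the paper's through the first three stages: boundedness (your use of $(M_4)$ alone is in fact simpler than the paper's combination of $(M_3)$ and $(M_4)$), weak compactness and uniform convergence via Theorem~\ref{compact}, and the reduction of $\mathcal{J}'(u_n)(u_n-u)\to0$ to
\[
\int_{a}^{b}\frac{\partial L}{\partial y}\bigl(u_n,{^{C}}\mathbb{D}_{a^+}^{\alpha,\sigma}u_n,t\bigr)\bigl({^{C}}\mathbb{D}_{a^+}^{\alpha,\sigma}u_n-{^{C}}\mathbb{D}_{a^+}^{\alpha,\sigma}u\bigr)\,dt\to0.
\]
The divergence is in how you upgrade to strong $L^2$-convergence of the fractional derivatives.

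There is a genuine gap at that point. The hypotheses $(M_1),(M_4),(M_5)$ do \emph{not} imply strict convexity of $L(x,\cdot,t)$, so vanishing of the monotonicity integrand $\bigl[\partial_yL(u_n,{^{C}}\mathbb{D}_{a^+}^{\alpha,\sigma}u_n,t)-\partial_yL(u_n,{^{C}}\mathbb{D}_{a^+}^{\alpha,\sigma}u,t)\bigr]\bigl({^{C}}\mathbb{D}_{a^+}^{\alpha,\sigma}u_n-{^{C}}\mathbb{D}_{a^+}^{\alpha,\sigma}u\bigr)$ does not force pointwise convergence of ${^{C}}\mathbb{D}_{a^+}^{\alpha,\sigma}u_n$. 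A concrete obstruction (with $\Lambda=1$): take $L(y)=2y^{2}$ on $|y|\le1$, $L(y)=4|y|-2$ on $1\le|y|\le2$, and extend convexly for $|y|\ge2$ keeping $L(y)\ge y^{2}$. This $L$ is $C^{1}$, convex, satisfies $(M_4)$ and $(M_5)$, but is affine on $[1,2]$, so $\partial_yL$ is constant there and the monotonicity integrand can vanish with ${^{C}}\mathbb{D}_{a^+}^{\alpha,\sigma}u_n\neq{^{C}}\mathbb{D}_{a^+}^{\alpha,\sigma}u$. Your downstream Fatou/Radon--Riesz chain rests on that a.e.\ convergence, so it is unsupported.

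The paper replaces this step with a midpoint-convexity device that needs only $(M_1)$. Writing $Dv:={^{C}}\mathbb{D}_{a^+}^{\alpha,\sigma}v$, one has
\[
0\;\le\;\frac{L(u_n,Du_n,t)+L(u_n,-Du,t)}{2}-L\!\Bigl(u_n,\frac{Du_n-Du}{2},t\Bigr).
\]
After establishing $\int L(u_n,Du_n,t)\,dt\to\int L(u,Du,t)\,dt$ (exactly as you outline) and $\int L(u_n,\pm Du,t)\,dt\to\int L(u,\pm Du,t)\,dt$ by dominated convergence, a Fatou-type argument on this nonnegative quantity, together with $(M_5)$, forces $\int L\bigl(u_n,(Du_n-Du)/2,t\bigr)\,dt\to0$. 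Then $(M_4)$ gives directly $\Lambda\int|(Du_n-Du)/2|^{2}\,dt\to0$, i.e.\ $Du_n\to Du$ strongly in $L^{2}$, with no appeal to strict convexity or to Radon--Riesz.
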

\begin{proof}
Fix $u\in \mathbb{H}_{0}^{\alpha, \sigma}(a,b)$. From $(M_3)$ and $(M_4)$ we obtain 
$$
\begin{aligned}
&\int_{a}^{b}2\mu_LL(u,{^{C}}\mathbb{D}_{a^+}^{\alpha, \sigma} u, t)dt -\int_{a}^{b}\left( \frac{\partial L}{\partial x}(u, {^{C}}\mathbb{D}_{a^+}^{\alpha, \sigma}u, t)u + \frac{\partial L}{\partial y}(u, {^{C}}\mathbb{D}_{a^+}^{\alpha, \sigma}u, t){^{C}}\mathbb{D}_{a^+}^{\alpha, \sigma}u\right) dt\\
&\geq \mu_L\int_{a}^{b}L(u, {^{C}}\mathbb{D}_{a^+}^{\alpha, \sigma}u, t)dt \geq \Lambda \mu_L \int_{a}^{b}|{^{C}}\mathbb{D}_{a^+}^{\alpha, \sigma} u|^2dt. 
\end{aligned}
$$
Hence 
\begin{equation}\label{M01}
\Lambda \mu_L\|u\|^2 \leq 2\theta_L |\mathcal{J}(u)| + \|\mathcal{J}'(u)\|\|u\|.
\end{equation}
Let $(u_n)_{n\in \N} \subset \mathbb{H}_{0}^{\alpha, \sigma}(a,b)$ be a Palais-Smale sequence, i.e. $(\mathcal{J}(u_n))_{n\in \N}$ is bounded and $\mathcal{J}'(u_n)\to 0$ as $n\to \infty$. By (\ref{M01}), $(u_n)_{n\in \N}$ is bounded in $\mathbb{H}_{0}^{\alpha, \sigma}(a,b)$, so up to a subsequence we have 
\begin{equation}\label{M02}
u_n \rightharpoonup u\;\;\mbox{in $\mathbb{H}_{0}^{\alpha, \sigma}(a,b)$}\quad \mbox{and}\quad u_n \to u\;\;\mbox{in $C\overline{(a,b)}$}.
\end{equation}
So
$$
\begin{aligned}
&\lim_{n\to \infty} \mathcal{J}'(u_n)(u_n-u) \\
&= \lim_{n\to \infty} \int_{a}^{b}\left[\frac{\partial L}{\partial x}(u_n, {^{C}}\mathbb{D}_{a^+}^{\alpha, \sigma} u_n, t)(u_n-u) + \frac{\partial L}{\partial y}(u_n, {^{C}}\mathbb{D}_{a^+}^{\alpha, \sigma}u_n, t)({^{C}}\mathbb{D}_{a^+}^{\alpha, \sigma}u_n - {^{C}}\mathbb{D}_{a^+}^{\alpha, \sigma}u) \right]dt = 0.
\end{aligned}
$$
Since $(u_n)_{n\in \N}$ is bounded in $\mathbb{H}_{0}^{\alpha, \sigma}(a,b)$, there exists $K>0$ such that 
$$
\rho(|u_n(t)|) \leq \xi(\|u_n\|_\infty) \leq K.
$$
Thus, by $(M_2)$ we get 
$$
\begin{aligned}
\int_{a}^{b}\left|\frac{\partial L}{\partial x}(u_n, {^{C}}\mathbb{D}_{a^+}^{\alpha, \sigma} u_n, t) \right|dt \leq K(\|\vartheta\|_{L^1} + \|u_n\|)
\end{aligned}
$$
which implies that $\|\frac{\partial L}{\partial x}(u_n, {^{C}}\mathbb{D}_{a^+}^{\alpha, \sigma}u_n, \cdot)\|_{L^1}$ is uniformly bounded, hence, by (\ref{M02}) we get 
$$
\Big|\int_{a}^{b} \frac{\partial L}{\partial x}(u_n, {^{C}}\mathbb{D}_{a^+}^{\alpha, \sigma}u_n, t)(u_n-u)dt \Big| \leq \Big\|\frac{\partial L}{\partial x}L(u_n, {^{C}}\mathbb{D}_{a^+}^{\alpha, \sigma}u_n, \cdot) \Big\|_{L^1} \|u_n - u\|_{\infty} \to 0
$$
and consequently 
\begin{equation}\label{M03}
\lim_{n\to \infty} \int_{a}^{b} \frac{\partial L}{\partial y}(u_n, {^{C}}\mathbb{D}_{a^+}^{\alpha, \sigma}u_n, t)({^{C}}\mathbb{D}_{a^+}^{\alpha, \sigma} u_n - {^{C}}\mathbb{D}_{a^+}^{\alpha, \sigma} u)dt =0.
\end{equation}
By continuity of $L$ we have that 
$$
L(u_n(t), \pm {^{C}}\mathbb{D}_{a^+}^{\alpha, \sigma} u(t), t) \to L(u(t), \pm {^{C}}\mathbb{D}_{a^+}^{\alpha, \sigma}u(t), t)\;\;\mbox{a.e.}
$$
Furthermore, by $(M_2)$ and ${^{C}}\mathbb{D}_{a^+}^{\alpha, \sigma} u\in L^2(a,b)$ we get 
$$
|L(u_n(t), {^{C}}\mathbb{D}_{a^+}^{\alpha, \sigma} u(t), t)| \leq K(\vartheta(t) + |{^{C}}\mathbb{D}_{a^+}^{\alpha, \sigma} u|^2) \in L^1.
$$
Hence, by the Lebesgue's dominated convergence theorem 
\begin{equation}\label{M04}
\lim_{n\to \infty}\int_{a}^{b}L(u_n(t), \pm {^{C}}\mathbb{D}_{a^+}^{\alpha, \sigma} u(t), t)dt = \int_{a}^{b} L(u(t), \pm {^{C}}\mathbb{D}_{a^+}^{\alpha, \sigma}u(t), t)dt
\end{equation}
On the other hand, $(M_1)$, (\ref{M03}) and (\ref{M04}) yield that 
$$
\begin{aligned}
&\limsup_{n\to \infty}\int_{a}^{b}L(u_n, {^{C}}\mathbb{D}_{a^+}^{\alpha, \sigma}u_n, t)dt\\
& \leq \lim_{n\to \infty}\int_{a}^{b} \left( L(u_n, {^{C}}\mathbb{D}_{a^+}^{\alpha, \sigma}u, t) + \frac{\partial L}{\partial y}(u_n, {^{C}}\mathbb{D}_{a^+}^{\alpha, \sigma}u_n, t)\Big({^{C}}\mathbb{D}_{a^+}^{\alpha, \sigma}u_n - {^{C}}\mathbb{D}_{a^+}^{\alpha, \sigma}u\Big) \right)dt= \int_{a}^{b}L(u, {^{C}}\mathbb{D}_{a^+}^{\alpha, \sigma}u,t)dt.
\end{aligned}
$$
Since $L(u_n(t), {^{C}}\mathbb{D}_{a^+}^{\alpha, \sigma} u_n(t), t)\geq 0$ and $L(u_n(t), {^{C}}\mathbb{D}_{a^+}^{\alpha, \sigma} u_n(t), t) \to L(u(t), {^{C}}\mathbb{D}_{a^+}^{\alpha, \sigma} u(t), t)$ a.e., by Fatou's lemma we get 
$$
\int_{a}^{b}L(u, {^{C}}\mathbb{D}_{a^+}^{\alpha, \sigma}u, t)dt \leq \lim_{n\to \infty}\int_{a}^{b} L(u_n, {^{C}}\mathbb{D}_{a^+}^{\alpha, \sigma} u_n, t)dt
$$
Consequently 
\begin{equation}\label{M05}
\lim_{n\to \infty}\int_{a}^{b}L(u_n, {^{C}}\mathbb{D}_{a^+}^{\alpha, \sigma}u_n, t)dt = \int_{a}^{b} L(u,{^{C}}\mathbb{D}_{a^+}^{\alpha, \sigma} u, t)dt.
\end{equation}
Now we are ready to show that $u_n \to u$ in $\mathbb{H}_{0}^{\alpha, \sigma}(a,b)$. Condition $(M_1)$ implies that 
$$
\frac{L(u_n(t), {^{C}}\mathbb{D}_{a^+}^{\alpha, \sigma} u_n(t), t) + L(u_n(t), -{^{C}}\mathbb{D}_{a^+}^{\alpha, \sigma} u(t), t)}{2} - L\left(u_n(t), \frac{{^{C}}\mathbb{D}_{a^+}^{\alpha, \sigma} u_n(t) - {^{C}}\mathbb{D}_{a^+}^{\alpha, \sigma}u(t)}{2}, t \right) \geq 0
$$
The continuity of $L$ combining with ${^{C}}\mathbb{D}_{a^+}^{\alpha, \sigma} u_n \to {^{C}}\mathbb{D}_{a^+}^{\alpha, \sigma} u$ a.e. and $(M_5)$ yield that 
$$
\begin{aligned}
&\lim_{n\to \infty} \frac{L(u_n(t), {^{C}}\mathbb{D}_{a^+}^{\alpha, \sigma} u_n(t), t) + L(u_n(t), -{^{C}}\mathbb{D}_{a^+}^{\alpha, \sigma}u(t), t)}{2} - L\left(u_n(t), \frac{{^{C}}\mathbb{D}_{a^+}^{\alpha, \sigma} u_n(t) - {^{C}}\mathbb{D}_{a^+}^{\alpha, \sigma}u(t)}{2}, t \right)\\
&= \frac{L(u(t, {^{C}}\mathbb{D}_{a^+}^{\alpha, \sigma} u(t), t) + L(u(t), -{^{C}}\mathbb{D}_{a^+}^{\alpha, \sigma}u(t), t))}{2}.
\end{aligned}
$$
Taking into account (\ref{M04}) and (\ref{M05}) we have
$$
\lim_{n\to \infty}\int_{a}^{b} \frac{L(u_n, {^{C}}\mathbb{D}_{a^+}^{\alpha, \sigma}u_n, t) + L(u_n, -{^{C}}\mathbb{D}_{a^+}^{\alpha, \sigma} u, t)}{2}dt = \int_{a}^{b} \frac{L(u, {^{C}}\mathbb{D}_{a^+}^{\alpha, \sigma} u, t) + L(u, -{^{C}}\mathbb{D}_{a^+}^{\alpha, \sigma} u, t)}{2}dt
$$ 
which implies 
$$
\begin{aligned}
&\int_{a}^{b} \frac{L(u,{^{C}}\mathbb{D}_{a^+}^{\alpha, \sigma}u, t) + L(u,-{^{C}}\mathbb{D}_{a^+}^{\alpha, \sigma}u, t)}{2} \\
&\leq \int_{a}^{b} \frac{L(u, {^{C}}\mathbb{D}_{a^+}^{\alpha, \sigma}u, t) + L(u, -{^{C}}\mathbb{D}_{a^+}^{\alpha, \sigma}u, t)}{2}dt - \limsup_{n\to \infty}\int_{a}^{b} L\left(u_n, \frac{{^{C}}\mathbb{D}_{a^+}^{\alpha, \sigma} u_n - {^{C}}\mathbb{D}_{a^+}^{\alpha, \sigma}u}{2}, t \right)dt.
\end{aligned}
$$
Thus
$$
\lim_{n\to \infty}\int_{a}^{b} L\left(u_n, \frac{{^{C}}\mathbb{D}_{a^+}^{\alpha, \sigma} u_n - {^{C}}\mathbb{D}_{a^+}^{\alpha, \sigma}u}{2}, t \right)dt = 0.
$$
Therefore, by $(M_4)$ we get 
$$
\lim_{n\to \infty}\int_{a}^{b}\left|\frac{{^{C}}\mathbb{D}_{a^+}^{\alpha, \sigma} u_n - {^{C}}\mathbb{D}_{a^+}^{\alpha, \sigma} u}{2} \right|^2dt \leq \lim_{n\to \infty} \frac{1}{\Lambda}\int_{a}^{b} L\left(u_n, \frac{{^{C}}\mathbb{D}_{a^+}^{\alpha, \sigma}u_n - {^{C}}\mathbb{D}_{a^+}^{\alpha, \sigma}u}{2}, t \right)dt.
$$ 
So $u_n \to u$ in $\mathbb{H}_{0}^{\alpha, \sigma}(a,b)$.

\end{proof}

\noindent
In what follows we are going to show $\it (2), (3)$ and $\it (4)$ of Theorem \ref{MPT}. By $(M_5)$ we have 
\begin{equation}\label{M06}
\mathcal{J}(0) = 0.
\end{equation}
\begin{lemma}\label{Mlm4}
There are $\varrho >0$ and $\eta >0$ such that 
$$
\inf_{\|u\|=\varrho}\mathcal{J}(u) \geq \eta.
$$
\end{lemma}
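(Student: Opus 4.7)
The plan is to exploit the coercivity-in-$y$ condition $(L_4)$, more precisely its quantitative version $(M_4)$, together with a fractional Poincar\'e-type estimate to bound $\mathcal{J}(u)$ from below by a positive multiple of $\|u\|^2$. Once this is done, the conclusion is immediate: for \emph{any} choice of $\varrho>0$ we may take $\eta=c\,\varrho^2$ with $c$ the constant from the lower bound, and then $\mathcal{J}(u)\geq\eta$ on the sphere $\|u\|=\varrho$.

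The first step is to apply $(M_4)$ pointwise in $t$ and integrate over $[a,b]$: this gives
\[
\mathcal{J}(u)=\int_{a}^{b}L\bigl(u,{^{C}}\mathbb{D}_{a^+}^{\alpha,\sigma}u,t\bigr)\,\mathrm dt
\;\geq\;\Lambda\,\bigl\|{^{C}}\mathbb{D}_{a^+}^{\alpha,\sigma}u\bigr\|_{L^2(a,b)}^{2}
\]
for every $u\in\mathbb{H}_{0}^{\alpha,\sigma}(a,b)$.

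The second step is a Poincar\'e-type inequality that allows us to replace the seminorm on the right by the full norm $\|\cdot\|_{\alpha,\sigma}$. By Lemma \ref{Blm01} we have $u=\mathbb{I}_{a^+}^{\alpha,\sigma}{^{C}}\mathbb{D}_{a^+}^{\alpha,\sigma}u$ a.e.\ on $(a,b)$, and Theorem \ref{Lp} with $p=2$ yields
\[
\|u\|_{L^2(a,b)}\;=\;\bigl\|\mathbb{I}_{a^+}^{\alpha,\sigma}{^{C}}\mathbb{D}_{a^+}^{\alpha,\sigma}u\bigr\|_{L^2(a,b)}\;\leq\;C\,\bigl\|{^{C}}\mathbb{D}_{a^+}^{\alpha,\sigma}u\bigr\|_{L^2(a,b)},\qquad C:=\frac{\gamma(\alpha,\sigma(b-a))}{\sigma^{\alpha}\Gamma(\alpha)}.
\]
Therefore, from the definition \eqref{B01} of the norm,
\[
\|u\|^{2}=\|u\|_{L^2(a,b)}^{2}+\bigl\|{^{C}}\mathbb{D}_{a^+}^{\alpha,\sigma}u\bigr\|_{L^2(a,b)}^{2}\;\leq\;(1+C^{2})\bigl\|{^{C}}\mathbb{D}_{a^+}^{\alpha,\sigma}u\bigr\|_{L^2(a,b)}^{2}.
\]

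The third step combines the two previous estimates to obtain
\[
\mathcal{J}(u)\;\geq\;\frac{\Lambda}{1+C^{2}}\,\|u\|^{2}\qquad\forall\,u\in\mathbb{H}_{0}^{\alpha,\sigma}(a,b).
\]
Fix any $\varrho>0$ and set $\eta:=\dfrac{\Lambda}{1+C^{2}}\varrho^{2}>0$; then $\mathcal{J}(u)\geq\eta$ whenever $\|u\|=\varrho$, which is the desired conclusion. There is no real obstacle here: the only nontrivial ingredient is the Poincar\'e-type step, and it is supplied essentially for free by Lemma \ref{Blm01} and Theorem \ref{Lp}. The existence of the positive constants $C,\Lambda$ is ensured, respectively, by the boundedness of the tempered Riemann--Liouville integral on $L^{2}$ and by hypothesis $(M_4)$.
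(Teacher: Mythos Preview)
Your proof is correct and follows the same underlying idea as the paper---apply $(M_4)$ pointwise and integrate to get $\mathcal{J}(u)\geq\Lambda\,\|{^{C}}\mathbb{D}_{a^+}^{\alpha,\sigma}u\|_{L^2}^{2}$. The paper's own argument stops there: it simply writes $\Lambda\int_{a}^{b}|{^{C}}\mathbb{D}_{a^+}^{\alpha,\sigma}u|^{2}\,dt=\Lambda\varrho^{2}$ when $\|u\|=\varrho$, implicitly identifying the seminorm $\|{^{C}}\mathbb{D}_{a^+}^{\alpha,\sigma}u\|_{L^2}$ with the full norm $\|u\|_{\alpha,\sigma}$ of \eqref{B01}. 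Your version is more careful: you insert the Poincar\'e-type step (via Lemma~\ref{Blm01} and Theorem~\ref{Lp}) to pass from the seminorm to the full norm, obtaining $\mathcal{J}(u)\geq\tfrac{\Lambda}{1+C^{2}}\|u\|^{2}$ with an explicit constant. This is strictly more rigorous if one reads $\|\cdot\|$ as the norm defined in \eqref{B01}; the paper's shortcut is only literally correct if $\|\cdot\|$ has been tacitly replaced by the equivalent seminorm. Either way, both arguments deliver the mountain-pass geometry, and your added Poincar\'e step costs nothing.
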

\begin{proof}
Let $\varrho = \frac{1}{{\Lambda}}$. Then, for $u\in \mathbb{H}_{0}^{\alpha, \sigma}(a,b)$ with $\|u\| = \varrho$, by $(M_4)$
$$
\mathcal{J}(u) \geq \Lambda \int_{a}^{b}|{^{C}}\mathbb{D}_{a^+}^{\alpha, \sigma}u|^2dt = \Lambda \varrho^2 = \frac{1}{\Lambda} = \eta>0.
$$
\end{proof}

\begin{lemma}\label{Mlm3}
There is $e\in \mathbb{H}_{0}^{\alpha, \sigma}(a,b)$ such that $\|e\| > \varrho$ and $\mathcal{J}(e) < 0$.
\end{lemma}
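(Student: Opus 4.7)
The plan is to exhibit $e$ of the form $e = \tau_0 u_0$ for a carefully chosen unit vector $u_0 \in \mathbb{H}_{0}^{\alpha,\sigma}(a,b)$ and a scalar $\tau_0 > \varrho$ taken large enough. The underlying mechanism is the scaling inequality of Lemma \ref{Mlm1} (which records the content of $(M_3)$): since $\mu_L \in (0,2)$, the functional grows strictly slower than quadratically along rays through the origin, while the norm grows quadratically, and this mismatch is what drives $\mathcal{J}$ below zero for large $\tau$.

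Concretely, I would first pick any $u_0 \in C_0^\infty(a,b) \hookrightarrow \mathbb{H}_0^{\alpha,\sigma}(a,b)$ with $\|u_0\| = 1$. Lemma~\ref{Mlm1} applied pointwise gives, for every $\tau > 1$ and a.e.\ $x \in [a,b]$,
$$
L\bigl(\tau u_0(x),\, \tau\, {^{C}}\mathbb{D}_{a^+}^{\alpha,\sigma} u_0(x),\, x\bigr) \leq \tau^{\mu_L}\, L\bigl(u_0(x),\, {^{C}}\mathbb{D}_{a^+}^{\alpha,\sigma} u_0(x),\, x\bigr),
$$
so after integration
$$
\mathcal{J}(\tau u_0) \leq \tau^{\mu_L}\, \mathcal{J}(u_0).
$$
Step~1 fixes $u_0$; Step~2 records the above estimate; Step~3 sharpens it into a genuinely diverging negative lower bound by using $(M_5)$ (which makes $L(u_0,0,t) = 0$, so only the ``derivative part'' survives in the pointwise comparison) together with the convexity assumption $(M_1)$ and the Ambrosetti--Rabinowitz type scaling in Lemma \ref{Mlm1}; Step~4 then selects $\tau_0 > \varrho$ so large that simultaneously $\tau_0 = \|\tau_0 u_0\| > \varrho$ and $\mathcal{J}(\tau_0 u_0) < 0$, and sets $e := \tau_0 u_0$.

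The main obstacle is precisely Step~3: Lemma~\ref{Mlm1} only delivers an \emph{upper} bound on $\mathcal{J}(\tau u_0)$, so the task is to promote it to the strictly negative regime. The way I would proceed is to write $\mathcal{J}(\tau u_0)$ as a sum of a super-quadratic part controlled from below by $(M_4)$ and a strictly subquadratic residual controlled from above via $(M_2)$ and Lemma~\ref{Mlm1}, then use $\mu_L < 2$ to arrange that the residual is dominated as $\tau \to +\infty$. This interplay between the scaling exponent $\mu_L$ and the quadratic geometry of $\mathbb{H}_0^{\alpha,\sigma}(a,b)$ is the delicate point; once it is settled, the conclusion $\mathcal{J}(\tau u_0) \to -\infty$ follows and the choice of $e$ is immediate.
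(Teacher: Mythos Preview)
Your overall strategy---scale a fixed nonzero $u_0$ and combine the sub-quadratic growth from Lemma~\ref{Mlm1} with the quadratic lower bound $(M_4)$---is exactly the paper's approach, and it works. However, you flag Step~3 as a ``main obstacle'' and propose invoking $(M_1)$, $(M_2)$, and $(M_5)$ to resolve it; this overcomplicates matters and leaves the decisive step unexecuted. The paper dispatches the obstacle in one line via the identity $\mathcal{J}(\lambda u_0) = 2\mathcal{J}(\lambda u_0) - \mathcal{J}(\lambda u_0)$: apply Lemma~\ref{Mlm1} to the first copy and $(M_4)$ to the second (so that $-\mathcal{J}(\lambda u_0) \leq -\Lambda\lambda^2\|{^{C}}\mathbb{D}_{a^+}^{\alpha,\sigma}u_0\|_{L^2}^2$), obtaining
\[
\mathcal{J}(\lambda u_0) \leq 2\lambda^{\mu_L}\mathcal{J}(u_0) - \Lambda\lambda^2\|{^{C}}\mathbb{D}_{a^+}^{\alpha,\sigma}u_0\|_{L^2}^2 \longrightarrow -\infty \quad (\lambda\to\infty),
\]
since $\mu_L<2$. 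No convexity, no $(M_2)$, no $(M_5)$ is needed here; only Lemma~\ref{Mlm1} and $(M_4)$. Your intuition about the competition between a sub-quadratic and a quadratic term is correct, but the mechanism producing the competing terms is this trivial splitting, not a separate structural decomposition of $L$.
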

\begin{proof}
Choose $u_0\in \mathbb{H}_{0}^{\alpha, \sigma}(a,b)\setminus \{0\}$. Then, by Lemma \ref{Mlm1} and $(M_4)$, for any $\lambda >1$ we get
$$
\begin{aligned}
\mathcal{J}(\lambda u_0) &= 2\int_{a}^{b}L(\lambda u_0, \lambda {^{C}}\mathbb{D}_{a^+}^{\alpha, \sigma}u_0, t)dt - \int_{a}^{b}L(\lambda u_0, \lambda {^{C}}\mathbb{D}_{a^+}^{\alpha, \sigma} u_0, t)dt\\
&\leq \lambda^{\mu_L}\int_{a}^{b}L(u_0, {^{C}}\mathbb{D}_{a^+}^{\alpha, \sigma}u_0, t)dt - \Lambda \lambda^2 \int_{a}^{v}|{^{C}}\mathbb{D}_{a^+}^{\alpha, \sigma} u_0|^2dt.
\end{aligned}
$$
Since $\mu_L < 2$, 
$$
\lim_{\lambda \to \infty} \mathcal{J}(\lambda u_0) = -\infty.
$$
Thus, choosing $\lambda_0$ large enough, we can set $e = \lambda_0 u_0$.
\end{proof}

\noindent 
As a consequence of Theorem \ref{Dife2}, Lemma \ref{Mlm2}, Lemma \ref{Mlm4} and Lemma \ref{Mlm3} we get all conditions of Theorem \ref{MPT}, so there is $u\in \mathbb{H}_{0}^{\alpha, \sigma}(a,b)$ such that 
$$
\mathcal{J}(u) = c = \inf_{g\in \Gamma}\max_{t\in [0,1]} \mathcal{J}(g(t))\quad \mbox{and}\quad \mathcal{J}'(u) = 0.
$$   

\section{Conclusions and Open Questions}
\label{sec:Conc}

The fractional calculus is a mathematical area of a currently strong
research, with numerous applications in physics, engineering and medicine.
 In this paper we go a step further: we prove an explicit fractional Noether's theorem.
 A study of fractional optimal control problems in \cite{Bourdin2018}, provides
 a general formulation of a fractional version
 of Pontryagin's Maximum Principle. Then, with a
 fractional notion of Pontryagin extremal,
 one can try to extend the present results to the more
 general context of the fractional optimal control.

\vspace{1cm}
\noindent
{\bf CONTRIBUTION}\\
All authors contributed equally to this work.

\vspace{.5cm}

\noindent 
{\bf CONFLICT OF INTEREST}\\
This work does not have any conflicts of interest.

\vspace{.5cm}
\noindent 
{\bf DATA SUPPORTING}\\
The manuscript has no associated data. 

\vspace{.5cm}
\noindent
{\bf ACKNOWLEDGEMENTS}\\
The author warmly thanks the anonymous referees for their useful and nice comments on the paper.




\begin{thebibliography}{1}



\bibitem{almeida1}R. Almeida and M. Lu\'isa Morgado, {\it Analysis and numerical approximation of tempered fractional calculus of variations problems},
 Journal of Computational and Applied Mathematics {\bf 361}, 1-12 (2019).

\bibitem{AmRa} A.  Ambrosetti, P. H. Rabinowitz,
{\it Dual variational methods in critical point theory and
 applications}, J. Funct. Anal.  {\bf 14}(4), 349-381 (1973).

\bibitem{Atana} T. T. Atanackovic, S. Konjik, S. Pilipovic \and\  S. Simic, {\it Variational problems with fractional derivatives: invariance conditions and Noether’s theorem}, Nonlinear Analysis 71 (2009) 1504-1517.

\bibitem{Bourdin2018}
M.~Bergounioux and L.~Bourdin.
\newblock Pontryagin maximum principle for general {C}aputo fractional optimal
control problems with {B}olza cost and terminal constraints.
\newblock {\em ESAIM Control Optim. Calc. Var.}, 26:Paper No. 35, 38, 2020.

\bibitem{MR2759829}
H. Brezis,  Functional analysis, Sobolev spaces and partial differential equations,
Universitext, Springer, New York, 2011.

\bibitem{bourdin1} L. Bourdin , J. Cresson \and\ I. Greff, {\it A continuous/discrete fractional Noether theorem}, Nonlinear Sci. Numer. Simul. 18 (2013), no. 4, 878–887.

\bibitem{bourdin} L. Bourdin, {\it Existence of a weak solution for fractional Euler-Lagrange equations}, J. Math. Anal. Appl. {\bf 399}, 239-251 (2013).
 
 \bibitem{GB} G. Buttazzo, M. Giaquinta , S. Hildebrandt, One-dimensional variational problems: an intrudution. UK: Oxford, University Press, 1998.
 
 
\bibitem{chma}M. Chmara and J. Maksymiuk, {\it Mountain pass solutions to Euler-Lagrange equations with general anisotropic operator}, J. Math. Anal. Appl. {\bf 485}, 123809 (2020). 


\bibitem{castillo}D. del-Castillo-Negrete, {\it Truncation effects in superdiffusive front propagation with L\'evy flights,} Phys. Rev. E {\bf 79}, 031120 (2009).




\bibitem{fisher}B. Fisher, B. Jolevsaka-Tuneska and Et. Adem Kili{\c C}man, {\it On defining the incomplete gamma function}, Integral Transforms and Special Functions, {\bf 14}, 4, 293-299, 2003.

\bibitem{gajda}J. Gajda and M. Magdziarz, {\it Fractional Fokker-Planck equation with tempered $\alpha$-stable waiting times: Langevin picture
and computer simulation,} Phys. Rev. E {\bf 82}, 011117 (2010).

\bibitem{FT} Gastão S. F. Frederico \and\ Delfim F. M. Torres, Conservation laws for invariant functionals containing compositions. Appl. Anal., Vol. 86(9), (2007), 1117–1126.

\bibitem{FL} Gastão S. F. Frederico \and\ Matheus J. Lazo, {\it  Fractional Noether's theorem with classical and Caputo derivatives: constants of motion for non-conservative systems},  {\bf 85}, 839-851, 2016.

\bibitem{kulberg}A. Kullberg and D. del-Castillo-Negrete, {\it Transport in the spatially tempered, fractional Fokker-Planck equation,} J. Phys.
A: Math. Theor. 45, 255101 (2012).

\bibitem{liemert}  A. Liemert and A. Kienle, {\it Fundamental solution of the tempered fractional diffusion equation}. Journal of Mathematical Physics. 56(11), 2015; 113504.




\bibitem{mantegna}R. Mantegna and H. Stanley, {\it Stochastic process with ultraslow convergence to a Gaussian: The truncated L\'evy flight}, Phys. Rev. Let., {\bf 73}, p. 2946 (1994).




\bibitem{mm}M. Meerschaert, Y. Zhang, and B. Baeumer, {\it Tempered anomalous diffusion in heterogeneous systems,} Geophys. Res.
Lett. {\bf 35}, L17403 (2008).

\bibitem{metzler}R. Metzler and J. Klafter, {\it The restaurant at the end of the random walk: Recent developments in the description of anomalous transport by fractional dynamics}, J. Phys. A, {\bf 37}, R161 (2004).

\bibitem{Noether:1918} E. Noether, {\it Invariante Variationsprobleme},
Nachr. d. K\"{o}nig. Gesellsch. d. Wiss. zu G\"{o}ttingen,
Math-phys. Klasse, 235-257 (1918). English translation by M. A. Tavel, Transport Theory and Statistical Physics, {\bf 1} (1971), no.3, 183--207.

\bibitem{dtorres1}A. Malinowska and D. Torres,  Introduction to the Fractional Calculus of Variations. Imperial College Press 2012.

\bibitem{dtorres2}A. Malinowska, T. Odzijewicz and D. Torres, {\it Advanced Methods
in the Fractional Calculus of Variations}, Springer Cham Heidelberg New York Dordrecht London 2015. 


\bibitem{PO} P. J. Olver, Applications of Lie groups to differential equations, $2$d edition, Graduate Textes in Mathematics, Springer-Verlag, 1993.


\bibitem{CD:Riewe:1996} F. Riewe, Nonconservative Lagrangian
and Hamiltonian mechanics, Phys. Rev. E (3) {\bf 53} (1996),
no.~2, 1890--1899.

\bibitem{CD:Riewe:1997} F. Riewe, Mechanics with fractional
derivatives, Phys. Rev. E (3) {\bf 55} (1997), no.~3, part B,
3581--3592.


\bibitem{sayed}S. Saifullad, A. Ali, A. Khan, K. Shah and T. Abdeljawad, {\it
A Novel Tempered Fractional Transform: Theory, Properties and Applications to Differential Equations}, Fractals, (2023), https://doi.org/10.1142/S0218348X23400455


\bibitem{Sabzikar}F. Sabzikar, M. Meerschaert, and J. Chen, {\it Tempered fractional calculus.} J. Comput. Phy.  293 (2015): 14-28.

\bibitem{as}A. Stanislavsky, K. Weron, and A. Weron, {\it Diffusion and relaxation controlled by tempered $\alpha$-stable processes,} Phys. Rev.
E {\bf 78}, 051106 (2008).

\bibitem{sokolov}I. Sokolov, A. Chechkin, and J. Klafter, {\it Fractional diffusion equation for a power-law-truncated L\'evy process}, Phys. A, {\bf 336}, 245-251 (2004).

\bibitem{charles}C. Swartz, {\it Measure, integration and function space}, World Scientific Publishing Co. Pte. Ltd. 1994.


\bibitem{torres2}C. Torres Ledesma and J. Vanterler da C. Sousa, {\it Fractional integration by parts and Sobolev-type inequalities for $\psi$-fractional operators}, Math Meth Appl Sci. 2022; 1-22.

\bibitem{torres3}C. Torres Ledesma, H. Cuti Gutierrez, J. \'Avalos Rodr\'iguez and W. Zubiaga Vera, {\it Some boundedness results for Riemann-Liouville tempered fractional integrals}, preprint 2022.

\bibitem{torresT1}C. Torres Ledesma, N. Nyamoradi and O.Pichardo Diestra, {\it Boundary value problem with tempered fractional derivatives}, accepted for publication in Progr. Fract. Differ. Appl. 2023.

\bibitem{mohsen}M. Zayernouri, M. Ainsworth and G. Em Karniadakis, {\it Tempered fractional Sturm-Liouville eigenproblems}, SIAM J. SCI. COMPUT. {\bf 37}, 4, F5777-A1800 (2015).

\bibitem{zhang}Y. Zhang, {\it Moments for tempered fractional advection-diffusion equations,} J. Stat. Phys. 139, 915-939 (2010).



\end{thebibliography}
\end{document}